\documentclass[a4paper,12pt]{article}
\usepackage[utf8]{inputenc}
\usepackage[cm]{aeguill}
\usepackage{amsfonts}
\usepackage{latexsym}
\usepackage[dvips]{graphicx}

\usepackage{amsmath}
\usepackage{amssymb}
\usepackage{amsthm}
\usepackage{bbm}
\usepackage{nicefrac}
\usepackage[T1]{fontenc}
 
 \setlength{\textwidth}{125mm}
 \setlength{\textheight}{185mm}
 
\newtheorem{df}{Definition}[section]
\newtheorem{lm}[df]{Lemma}
\newtheorem{pr}[df]{Proposition}
\newtheorem{Th}[df]{Theorem}
\newtheorem{co}[df]{Corollary}
\newtheorem{rem}[df]{Remark}
\newtheorem{ex}[df]{Example}

\newcommand{\e}{\varepsilon}

\newcommand{\R}[1]{\mathbb{R}^{#1}}
\newcommand{\T}[1]{(T_{c}^{-})^{#1}}
\newcommand{\ttt}{T_c^-}
\newcommand{\tttt}{\widehat{T}_c^-}
\newcommand{\TT}[1]{(T_{c}^+)^{#1}}
\newcommand{\TTT}{T_c^+}

\newcommand{\<}{\prec}
\newcommand{\s}{\mathcal{S}}

\newcommand{\A}{\mathcal{A}}
\renewcommand{\AA}{\widetilde{\mathcal{A}}}
\newcommand{\HH}{\mathcal{H}}
\newcommand{\HHH}{\widehat{\mathcal{H}}}
\newcommand{\hh}[1]{\mathcal{H}(#1)\cap C^0(X,\R{})}
\newcommand{\om}{\omega}
\newcommand{\XZ}{X^\mathbb{Z}}
\newcommand{\fe}{\varphi_{\varepsilon}}

\renewcommand{\d}{\operatorname{d}}

\newcommand{\diam}{\operatorname{diam}}

\author{M. ZAVIDOVIQUE \\ \small \texttt {UMPA, ENS Lyon, 46 all\' ee d'Italie, 69007, Lyon, France} \\ \small \texttt{e-mail: maxime.zavidovique@umpa.ens-lyon.fr}}
\title{Strict sub-solutions and Ma\~ne potential in discrete  weak KAM theory}

\begin{document}

\maketitle

\begin{abstract}
In this paper, we explain some facts on the discrete case of weak KAM theory. In that setting, the Lagrangian is replaced by a cost $c:X\times X \rightarrow \mathbb{R}$, on a ``reasonable'' space $X$. This covers for example the case of periodic time-dependent  Lagrangians. As is well known, it is possible in that case to adapt most of weak KAM theory. A major difference is that critical sub-solutions are not necessarily continuous. We will show how to define a Ma\~ne potential. In contrast to the Lagrangian case, this potential is not continuous. We will recover the Aubry set from the set of continuity points of the Ma\~ne potential, and also from critical sub-solutions.
\end{abstract}
\newpage
\section*{Introduction}
In the past twenty years, new techniques have been developed in order to study time-periodic or autonomous Lagrangian dynamical systems. Among them, Aubry-Mather theory (for an introduction see \cite{Ba} for the annulus case and \cite{Mat}, \cite{Ma} for the compact, time periodic case) and Albert Fathi's weak KAM theory (see \cite{Fa} for the compact case and \cite{FaMa} for the non-compact case) have appeared to be very fruitful. More recently, a discretization of weak KAM theory applied to optimal transportation has allowed to obtain deep results of existence of optimal transport maps (see for example \cite{Be},\cite{fatfig07}). A quite similar formalism was also used in the study of time periodic Lagrangians, for example in (\cite{cis} or \cite{mas}). In this paper, we give analogue results in this discrete setting of those already obtained in the continuous one. In particular, our phase space $X$ will be required to have very little regularity (for example a length space with compact closed balls will do) and no global compactness assumption.
\\
\\
In a first part we introduce the Lax-Oleinik semi-groups $\ttt$ and $\TTT$ and study its sub-solutions. We start with a cost $c:X^2\rightarrow \R{}$ continuous which verifies: 
\begin{enumerate}
 \item\label{unif} \textbf{Uniform super-linearity}: for every $k\geqslant 0$,
   there exists $C(k)\in \R{}$ such that 
   $$\forall (x,y)\in X^2,
   c(x,y)\geqslant k\d(x,y)-C(k);$$
\item \label{unifb} \textbf{Uniform boundedness}: for every $R\in \R{}$, there
  exists $A(R)\in \R{}$ such that $\d(x,y)\leqslant R \Rightarrow
  c(x,y)\leqslant A(R)$.
\end{enumerate}
A function $u$ is an $\alpha$-sub-solution for $c$ if
\begin{equation}\label{sub}
\forall (x,y)\in X^2,u(y)-u(x)\leqslant c(x,y)+\alpha.
\end{equation}
The critical constant $\alpha[0]$ is the smallest constant $\alpha$ such that there are $\alpha$-sub-solutions. In the first part we prove, as in \cite{FaSi}, the existence of critical sub-solutions which are strict on a maximal set:
\begin{Th}\label{Strict}
 There is a continuous function $u_1:X\rightarrow \R{}$ which is an $\alpha[0]$-sub-solution such that for every $(x,y)\in X^2$, if there exists an $\alpha[0]$-sub-solution, $u$ such that
$$u(y)-u(x)<c(x,y)+\alpha[0],$$
then we also  have
$$u_1(y)-u_1(x)<c(x,y)+\alpha[0].$$
\end{Th}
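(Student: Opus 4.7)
Following the Fathi--Siconolfi strategy \cite{FaSi}, I would produce $u_1$ as a countable convex combination of continuous $\alpha[0]$-sub-solutions, each arranged to be strict on a different open piece of the ``strict set''. Set
$$\Delta := \bigl\{(x_0,y_0)\in X^2 : \exists\,u \text{ an } \alpha[0]\text{-sub-solution with } u(y_0)-u(x_0)<c(x_0,y_0)+\alpha[0]\bigr\};$$
the goal is a continuous $\alpha[0]$-sub-solution that is strict at every point of $\Delta$.

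The main step, and the main obstacle, is the following regularization lemma: every pair $(x_0,y_0)\in\Delta$ admits a \emph{continuous} $\alpha[0]$-sub-solution $v$ strict at $(x_0,y_0)$. In the Lagrangian setting of \cite{FaSi} this is immediate because sub-solutions are Lipschitz, but here a witness $u$ may be discontinuous. The natural tool is the Lax--Oleinik operator $\ttt$: by uniform superlinearity and uniform boundedness of $c$ the function $\ttt u$ is continuous, and the elementary bounds $\ttt u(x_0)\ge u(x_0)-\alpha[0]$ (from the sub-solution property of $u$) together with $\ttt u(y_0)\le u(x_0)+c(x_0,y_0)$ make $\ttt u$ an $\alpha[0]$-sub-solution. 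The delicate case is when both bounds are tight simultaneously -- this corresponds to $x_0$ lying on the projected Aubry set and prevents the strict slack from being transferred to $\ttt u$; there I would combine $\ttt u$ convexly with a continuous sub-solution built from the Ma\~n\'e potential (e.g.\ $y\mapsto h(x_*,y)$ for a well chosen $x_*$) in order to recreate the strict inequality while staying continuous.

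Once that step is settled, define $S_v := \{(x,y)\in X^2 : v(y)-v(x)<c(x,y)+\alpha[0]\}$ for each continuous $\alpha[0]$-sub-solution $v$. Continuity of $v$ and of $c$ makes $S_v$ open in $X^2$, and the regularization lemma gives $\bigcup_v S_v = \Delta$. A length space with compact closed balls is separable, so $X^2$ is second countable and a Lindel\"of argument extracts a countable subfamily $(v_n)_{n\ge 1}$ of continuous $\alpha[0]$-sub-solutions with $\bigcup_n S_{v_n}=\Delta$.

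Finally I would fix a base point $x_*\in X$ and weights $\lambda_n>0$ with $\sum_n \lambda_n=1$, chosen decaying fast enough that
$$u_1 \;:=\; \sum_{n\ge 1}\lambda_n\bigl(v_n-v_n(x_*)\bigr)$$
converges uniformly on every ball of $X$; the required local bounds on $v_n-v_n(x_*)$ follow from uniform boundedness of $c$ applied iteratively. Then $u_1$ is continuous and, as a convex combination of $\alpha[0]$-sub-solutions, is itself an $\alpha[0]$-sub-solution. For any $(x,y)\in\Delta$, picking $n_0$ with $(x,y)\in S_{v_{n_0}}$ yields
$$u_1(y)-u_1(x) \le \sum_{n\ne n_0}\lambda_n\bigl(c(x,y)+\alpha[0]\bigr) + \lambda_{n_0}\bigl(v_{n_0}(y)-v_{n_0}(x)\bigr) < c(x,y)+\alpha[0],$$
so $u_1$ is strict on all of $\Delta$, as required.
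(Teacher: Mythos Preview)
Your Lindel\"of and convex-combination steps are fine and close in spirit to the paper's Lemma~\ref{I}. The genuine gap is exactly where you flag it: the regularization lemma. Your sketch does not establish it, and two of its claims are incorrect.

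First, tightness of both bounds $\ttt u(x_0)=u(x_0)-\alpha[0]$ and $\ttt u(y_0)=u(x_0)+c(x_0,y_0)$ only says that \emph{this particular} $u$ satisfies $u(x_0)=\ttt u(x_0)+\alpha[0]$; it does not place $x_0$ in the projected Aubry set, which by Lemma~\ref{IIII} requires $\T{n}v(x_0)+n\alpha[0]=v(x_0)=\TT{n}v(x_0)-n\alpha[0]$ for all $n$ and a suitable $v$. So your dichotomy ``either $\ttt u$ is strict at $(x_0,y_0)$ or $x_0\in\A$'' is false. Second, the Peierls barrier $h$ may be identically $+\infty$ on a noncompact $X$ (the paper discusses this explicitly after Theorem~\ref{conti}), so ``combine with $y\mapsto h(x_*,y)$'' is not available in general; and even when $h$ is finite you give no mechanism forcing $h_{x_*}$ to be strict at $(x_0,y_0)$.

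The paper avoids pair-by-pair regularization entirely. Proposition~\ref{egalite aubry} ($\AA_u=\AA_{\ttt u}$) reduces the Aubry set to an intersection over \emph{continuous} sub-solutions, and Lemma~\ref{I} takes a convex combination of a dense countable family of these to obtain a single continuous $u$ with $\AA_u=\AA$. The decisive step is then dynamical (Theorem~\ref{SStrict}): one sets
\[
u'=\sum_{n\geqslant 0} a_n\,\T{n}u+\sum_{n\geqslant 1} b_n\,\TT{n}u,
\]
a convex combination of \emph{all} forward and backward Lax--Oleinik iterates of this one $u$. If $u'$ fails to be strict at some pair, then every iterate $\T{n}u$ and $\TT{n}u$ has equality there, and chaining their minimizers/maximizers (via the a~priori compactness Lemma~\ref{apriori}) produces a bi-infinite $u$-calibrated sequence through that pair, forcing it into $\widehat{\A}_u=\widehat{\A}$. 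Since any sub-solution strict at $(x,y)$ already witnesses $(x,y)\notin\widehat{\A}$, this $u'$ is the desired $u_1$. In short, the regularization you need is itself the heart of the theorem; the paper obtains it by combining iterates of one well-chosen sub-solution rather than by patching witnesses at each pair.
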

The proof is done using the Lax-Oleinik semi-groups $\ttt$ and $\TTT$ and the notion of Aubry set as introduced in \cite{Be}.
\\ \\
The second part is devoted to the study of the continuity of sub-solutions and of an analogue of Ma\~ne's potential. Those two problems are closely related. As a matter of fact, in the Lagrangian continuous case, all critical sub-solutions are equi-Lipschitz maps and the Aubry set may be defined as the set of points $x\in X$ such that any sub-solution is differentiable at $x$. Moreover, this information is encrypted in the Ma\~ne potential $\phi :X^2\rightarrow \R{}$. more precisely, Fathi and Siconolfi (\cite{FaSi}) proved that a point $x$ is in the projected Aubry set if and only if the function $\phi_x:y\mapsto \phi(x,y)$ is differentiable at $x$. In the discrete case, we will see that sub-solutions are not necessarily continuous. However, analogously to the continuous case, the projected Aubry set is the set of points where all sub-solutions are continuous. Moreover, our Ma\~ne potential will verify the following: 
\begin{Th}\label{Mane}
 There in a function $\varphi :X^2\rightarrow \R{}$ which satisfies the following:
\begin{enumerate}
\item [(1)] for any $x\in X$, $\varphi(x,x)=0$;
\item[(2)] a function $u$ is a critical sub-solution if and only if
$$\forall (x,y)\in X^2, u(y)-u(x)\leqslant \varphi(x,y);$$
\item[(3)] for any $x\in X$, the function $\varphi_x:y\mapsto \varphi(x,y)$ is a critical sub-solution;
\item[(4)] a non isolated point $x\in X$ is in the Aubry set if and only if the function $\varphi_x:y\mapsto \varphi(x,y)$ is continuous at $x$;
\item[(5)] if $x\in X$ is non isolated, the function $\varphi_x$ is continuous at $x$ if and only if it is a  negative weak KAM solution, that is a fixed point of $\ttt +\alpha[0]$.
\end{enumerate}
\end{Th}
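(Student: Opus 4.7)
The natural candidate for $\varphi$ is
\begin{equation*}
\varphi(x,y) \;=\; \begin{cases} 0 & \text{if } x=y,\\ \displaystyle\inf_{n\geqslant 1}\inf_{x_0=x,\,x_n=y}\sum_{i=0}^{n-1}\bigl(c(x_i,x_{i+1})+\alpha[0]\bigr) & \text{if } x\neq y, \end{cases}
\end{equation*}
that is, the infimum of the action over all finite chains joining $x$ to $y$, truncated at $0$ on the diagonal. Then (1) is tautological. Item (2) follows from the telescoping identity $u(x_n)-u(x_0)\leqslant \sum_{i}[c(x_i,x_{i+1})+\alpha[0]]$ valid for any $\alpha[0]$-sub-solution $u$, combined in the converse direction with the one-step bound $\varphi(x,y)\leqslant c(x,y)+\alpha[0]$. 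Item (3) is a triangle-type inequality obtained by extending any near-optimal chain from $x$ to $y$ by the edge $y\to z$, giving $\varphi(x,z)\leqslant \varphi(x,y)+c(y,z)+\alpha[0]$; the diagonal cases $y=x$ or $z=x$ are checked directly, using the fact that every cycle has non-negative action (which is just a restatement of the definition of $\alpha[0]$).

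For items (4) and (5) I would introduce the cycle constant $\gamma(x)=\inf_{n\geqslant 1}\inf_{\text{cycles based at }x}\sum[c+\alpha[0]]\geqslant 0$ and identify the projected Aubry set $\A$ with $\{x:\gamma(x)=0\}$. Splitting a near-optimal chain at its penultimate vertex, one checks that $\varphi_x$ satisfies the Lax--Oleinik equation $\varphi_x(y)=\ttt\varphi_x(y)+\alpha[0]$ automatically for every $y\neq x$, while at $y=x$ this equation simplifies exactly to $0=\gamma(x)$. Consequently $\varphi_x$ is a negative weak KAM solution if and only if $x\in\A$, which reduces (5) to the non-trivial direction of (4).

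The direction ``$x\in\A\Rightarrow\varphi_x$ continuous at $x$'' is then short: fix $\e>0$ and a cycle $(x,x_1,\ldots,x_{n-1},x)$ of action below $\e/2$; for $y$ close enough to $x$, replacing the last edge by $(x_{n-1},y)$ yields, by continuity of $c$, a chain of action $<\e$, so $\limsup_{y\to x}\varphi(x,y)\leqslant 0$. The matching lower bound $\varphi(x,y)\geqslant u_1(y)-u_1(x)\to 0$ is provided by the continuous strict sub-solution $u_1$ of Theorem~\ref{Strict}.

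The main obstacle is the converse. Given $y_n\to x$, $y_n\neq x$, with $\varphi(x,y_n)\to 0$, I would pick near-optimal chains $P_n=(x=x_0^n,\ldots,x_{k_n}^n=y_n)$ with action $A_n\to 0$ and focus on the penultimate vertex $z_n=x_{k_n-1}^n$. The truncated sub-chain gives $\varphi_x(z_n)+c(z_n,y_n)+\alpha[0]\leqslant A_n$, so if a subsequential limit $z^\ast$ of $(z_n)$ exists, continuity of $c$ yields $\varphi_x(z^\ast)+c(z^\ast,x)+\alpha[0]\leqslant 0$, which combined with $\gamma(x)\geqslant 0$ forces $\gamma(x)=0$. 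Extracting $z^\ast$ is immediate when $k_n$ stays bounded: compactness of closed balls (assumed on $X$) and continuity of $c$ produce a limit cycle of action $0$ through $x$. The delicate case $k_n\to\infty$ is the real heart of the argument; I plan to eliminate it by exploiting uniform super-linearity~(\ref{unif}) to obtain a uniform lower bound on the individual step costs along $P_n$, then using a pigeonhole argument on the partial sums to replace $P_n$ by a strictly shorter near-optimal sub-chain terminating at a vertex close enough to $x$, thereby reducing the problem to the bounded-length case already handled.
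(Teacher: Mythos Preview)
Your chain-based definition of $\varphi$ coincides with the paper's definition $\varphi(x,y)=\sup_{u\prec c+\alpha[0]}[u(y)-u(x)]$; this equivalence is itself a theorem of the paper (Proposition~\ref{identification}), not a triviality. Items (1)--(3) and the identity $\varphi_x=\ttt\varphi_x+\alpha[0]$ on $X\setminus\{x\}$ are correctly obtained by your direct chain manipulations, whereas the paper derives them from the sup-definition via the ``in-between'' lemma~(\ref{entre}). So far the two routes are genuinely different but both sound.

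There is, however, a real gap: you assert $\A=\{x:\gamma(x)=0\}$ without proof. The inclusion $\A\subset\{\gamma=0\}$ is easy (apply Lemma~\ref{IIII} to the sub-solution $\varphi_x$ itself), but the reverse --- that near-zero-action cycles through $x$ force \emph{every} sub-solution to admit a calibrated bi-infinite chain through $x$ --- does not follow from anything you have written. The paper never proves this identification head-on; it obtains (4) and (5) instead from Theorem~\ref{SStrict}: if $x\notin\A$ one picks a continuous sub-solution $u$ strict at $x$, so that on one hand $\ttt\varphi_x(x)+\alpha[0]\geqslant\ttt u(x)+\alpha[0]-u(x)>0$ (giving the non-trivial half of (5)), and on the other hand $u$ can be bumped up by some $\varepsilon>0$ on a punctured neighbourhood of $x$ while remaining a sub-solution by Lemma~\ref{entre}, forcing $\liminf_{y\to x}\varphi_x(y)\geqslant\varepsilon/2>0$ (giving the non-trivial half of (4)). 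You could close your gap the same way, or argue directly that $\gamma(x)=0$ forces $(\ttt)^p u(x)+p\alpha[0]=u(x)$ for all $p$ and all $u\prec c+\alpha[0]$, by concatenating short-action cycles and using that $p\mapsto(\ttt)^p u(x)+p\alpha[0]$ is non-decreasing.

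Separately, your treatment of the ``main obstacle'' misidentifies the difficulty. The penultimate vertex $z_n$ always lies in a fixed compact set, independently of $k_n$: it is an approximate minimiser in $\ttt\varphi_x(y_n)$ with $\varphi_x$ a sub-solution and $y_n\to x$, so the a~priori compactness Lemma~\ref{apriori} applies. Once $z_n\to z^\ast$ along a subsequence, your own ingredients finish the argument: $\varphi_x$ is continuous at $z^\ast$ (via the Lax--Oleinik identity and Proposition~\ref{HH} if $z^\ast\neq x$, and by hypothesis if $z^\ast=x$), so $\varphi_x(z^\ast)+c(z^\ast,x)+\alpha[0]\leqslant 0$, whence $\gamma(x)\leqslant 0$. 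The pigeonhole/shortening plan for the case $k_n\to\infty$ is therefore unnecessary, and as sketched it is not clearly workable anyway.
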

For the definition of the semi-group $\ttt$ see section 1.

\section*{Acknowledgment}
I would like to thank Albert Fathi without whom this article would never have been written. His remarks and comments were of  invaluable help. This paper was partially elaborated during a stay at the Sapienza University in Rome. I wish to thank Antonio Siconolfi, Andrea Davini and the Dipartimento di Matematica "Guido Castelnuovo" for their hospitality while I was there. I also would like to thank Explora'doc which partially supported me during this stay. Finally, I would like to thank the ANR KAM faible (Project BLANC07-3\_187245, Hamilton-Jacobi and Weak KAM Theory) for its support during my research.

\section{On critical sub-solutions}
In this section we will fix a metric space $X$ which is a $B$-length space at scale $K$ for some constants $B$ and $K$ (see \ref{scale} for the exact definition) with compact closed balls and let $c:X\times
X\rightarrow \R{}$ be a continuous function which is uniformly
super-linear and uniformly bounded that is which verifies condition 1 and 2 of the introduction.
 
\begin{df}\rm
If $\alpha\in \R{}$ and $u:X\rightarrow \R{}$ is a (not necessarily continuous) function, we will say that $u$ is
$\alpha$-dominated (in short $u\<c+\alpha$) if
$$\forall (x,y)\in X^2,u(y)-u(x)\leqslant c(x,y)+\alpha.$$
We will denote by $\HH(\alpha)$ the set of $\alpha$-dominated functions. 

Following Albert Fathi's
weak KAM theory we introduce the Lax-Oleinik semi-groups:
$$\ttt u(x)=\inf_{y\in X}u(y)+c(y,x);$$
$$\TTT u(x)=\sup_{y\in X}u(y)-c(x,y).$$
\end{df}

\begin{Th}[weak KAM]\label{kam}
 There is a constant $\alpha [0]$ such that the equation $u=\ttt u+\alpha [0]$ (resp. $u=\TTT u-\alpha [0]$) admits a continuous solution and such that $\HH(\alpha)$ is empty for $\alpha<\alpha [0]$.
\end{Th}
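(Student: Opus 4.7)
I begin by setting
$$\alpha[0]:=\inf\{\alpha\in\R{}:\HH(\alpha)\neq\emptyset\},$$
so that the emptiness statement for $\alpha<\alpha[0]$ holds by construction. Finiteness is a two-line matter: the constant function $0$ lies in $\HH(C(0))$ by uniform super-linearity applied with $k=0$ (giving $c\geq -C(0)$), so $\alpha[0]\leq C(0)$; and setting $y=x$ in the sub-solution inequality forces $\alpha\geq -c(x,x)$ for every $x$, so $\alpha[0]\geq -c(x_0,x_0)>-\infty$ for any fixed $x_0\in X$.

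The core task is to produce a continuous fixed point of $\ttt+\alpha[0]$. The plan rests on two ingredients. First, a \emph{regularization lemma}: if $u:X\to\R{}$ is bounded below, then $\ttt u$ is continuous. Indeed, a local upper bound $\ttt u\leq M$ near some $x_0$, combined with uniform super-linearity at a large slope $k$, confines the near-minimizers $y$ in $\ttt u(x)=\inf_y[u(y)+c(y,x)]$ to a fixed closed ball around $x_0$; compactness of that ball and continuity of $c$ then express $\ttt u$ locally as a minimum of a continuous family of continuous functions. Second, two \emph{algebraic facts} for any $u\in\HH(\alpha[0])$: (a)~$\ttt u+\alpha[0]\geq u$, obtained by rearranging $u(x)-u(y)\leq c(y,x)+\alpha[0]$ and taking the infimum over $y$; and (b)~$\ttt u+\alpha[0]\in\HH(\alpha[0])$, obtained by combining the trivial bound $\ttt u(y)\leq u(x)+c(x,y)$ with the inequality $\ttt u(x)\geq u(x)-\alpha[0]$ from~(a).

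Starting from any $u_0\in\HH(\alpha[0])$ --- obtained as a pointwise limit of a sequence $u_n\in\HH(\alpha_n)$ with $\alpha_n\downarrow\alpha[0]$, then made continuous by one application of $\ttt$ --- the iterates $v_n:=\T{n}u_0+n\alpha[0]$ form a pointwise non-decreasing sequence of continuous $\alpha[0]$-sub-solutions. Assuming this sequence is bounded from above (see below), its pointwise supremum $u_\infty$ is itself a sub-solution; a compactness argument, using the confinement of minimizers from the regularization lemma, lets one pass $\ttt$ through the monotone limit, yielding $u_\infty=\ttt u_\infty+\alpha[0]$, and one final application of $\ttt$ ensures continuity. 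The $\TTT$-analogue follows by applying the whole argument to the transposed cost $\tilde c(y,x):=c(x,y)$, using that $u\mapsto-u$ sends $\HH(\alpha)$ for $c$ bijectively onto $\HH(\alpha)$ for $\tilde c$ (so the two critical values agree) and that a fixed point $w$ of $T_{\tilde c}^{-}+\alpha[0]$ yields the fixed point $-w$ of $\TTT-\alpha[0]$.

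The main obstacle is the uniform upper bound on $v_n$. Expanding,
$$v_n(x)=\inf_{y_0,\ldots,y_{n-1}}\Bigl[u_0(y_0)+\sum_{i=0}^{n-1}c(y_i,y_{i+1})+n\alpha[0]\Bigr]\quad(y_n=x),$$
so if $v_n(x)$ were unbounded, the near-minimizing $n$-step paths could be closed off by a short return segment to produce closed $c$-cycles of mean cost strictly less than $-\alpha[0]$, from which one manufactures $\alpha$-sub-solutions with $\alpha<\alpha[0]$ --- contradicting the definition of $\alpha[0]$. This is the discrete analogue of Ma\~ne's lemma and is the crucial bridge between the variational and the ergodic characterizations of $\alpha[0]$.
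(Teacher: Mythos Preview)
Your monotone-iteration strategy is natural and works in the compact setting, but it has a genuine gap in the generality the paper requires. The obstacle you yourself flag --- the uniform upper bound on $v_n=\T{n}u_0+n\alpha[0]$ --- is not a technicality to be patched: in the non-compact case this sequence may diverge to $+\infty$ everywhere. The paper is explicit about this possibility. The Peierls barrier $h(x,y)=\liminf_n c_n(x,y)+n\alpha[0]$ is allowed to be identically $+\infty$ (see the proof of Theorem~\ref{A}), and the final proposition of Section~2 shows that finiteness of $h$ is \emph{equivalent} to pointwise boundedness of the iterates $\T{n}\varphi_x+n\alpha[0]$. So the weak KAM theorem must be proved \emph{before} one knows whether such iterates converge, not by means of their convergence.

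Your sketched cycle argument also runs in the wrong direction. If $v_n(x)\to+\infty$, this says that \emph{every} $n$-step path terminating at $x$ has normalized cost tending to $+\infty$; it produces no cheap cycles. The Ma\~n\'e-type duality does give, for every $\varepsilon>0$, a closed cycle somewhere in $X$ of mean cost at most $-\alpha[0]+\varepsilon$, but in a non-compact space those near-optimal cycles may drift to infinity and never help bound $v_n$ at a fixed $x$. There is no contradiction with the minimality of $\alpha[0]$.

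The paper sidesteps the whole issue by applying the Schauder--Tykhonov fixed-point theorem in the quotient $\widehat{C^0(X,\R{})}=C^0(X,\R{})/\R{}\mathbbm{1}$. After normalization at a base point, the image $\ttt(\HH(\alpha))$ is locally equi-continuous (Proposition~\ref{HH}(iii)), so by Ascoli its closed convex hull in the quotient is compact, and the induced operator $\tttt$ maps it into itself. A fixed point $q(u)$ gives $\ttt u=u-\alpha'$ for some constant $\alpha'$, and a short comparison forces $-\alpha'=\alpha[0]$. Passing to the quotient is exactly what removes the need for an a~priori upper bound on iterates: compactness comes from equi-continuity alone.
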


\begin{proof}
 see the end of the appendix (\ref{proof kam}).
\end{proof}

We say
that a function $u$ is critically dominated or that it is a critical
sub-solution if it is $\alpha[0]$-dominated.  
Finally, we call negative
(resp. positive) weak KAM solution a fixed point of the operator $\ttt
+\alpha[0]$ (resp. $\TTT -\alpha[0]$). Let us state that weak KAM solutions exist by (\ref{kam}).  The following proposition is a
direct consequence of the definitions:
\begin{pr}
A function $u$ is a critical sub-solution if and only if it verifies one of the following properties:
\begin{enumerate}
 \item[(i)]$\forall (x,y)\in X^2,u(x)-u(y)\leqslant c(y,x)+\alpha [0]$ (or $u\<c+\alpha [0]$);
\item[(ii)] $u\leqslant \ttt u+\alpha [0]$;
\item[(iii)] $u\geqslant \TTT u-\alpha [0]$.
\end{enumerate}

\end{pr}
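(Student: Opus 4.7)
The plan is to establish (i)$\Leftrightarrow$(ii)$\Leftrightarrow$(iii) purely by unpacking the definitions of the Lax--Oleinik operators, since each one-sided pointwise inequality against an $\inf$ or $\sup$ is equivalent to the corresponding family of two-variable inequalities. This is why, as the author says, the proposition is a direct consequence of the definitions, and no use of the existence Theorem~\ref{kam} or of any regularity of $X$ or $c$ beyond the definitions themselves is required.

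First, observe that (i) is merely the defining inequality $u(y)-u(x)\leqslant c(x,y)+\alpha[0]$ with the dummy variables $x$ and $y$ interchanged; so the statement $u\prec c+\alpha[0]$ is literally the same as (i), with nothing to prove.

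For (i)$\Leftrightarrow$(ii), I would rewrite $u\leqslant \ttt u+\alpha[0]$ pointwise as $u(x)\leqslant \inf_{y\in X}\bigl[u(y)+c(y,x)\bigr]+\alpha[0]$ for every $x$. Since a real number lies below an infimum iff it lies below every element of the family over which the infimum is taken, this is equivalent to $u(x)\leqslant u(y)+c(y,x)+\alpha[0]$ for all $x,y\in X$, which is exactly (i). Symmetrically, for (i)$\Leftrightarrow$(iii), rewrite $u\geqslant \TTT u-\alpha[0]$ as $u(x)\geqslant \sup_{y\in X}\bigl[u(y)-c(x,y)\bigr]-\alpha[0]$; lying above a supremum is equivalent to lying above every element of the family, yielding $u(y)-u(x)\leqslant c(x,y)+\alpha[0]$ for all $x,y$, i.e.\ the original form of $\alpha[0]$-domination.

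There is no real obstacle here: each equivalence is a one-line manipulation. The only subtlety to keep straight is the ordering of the arguments of $c$ inside the two semi-groups: $\ttt$ uses $c(y,x)$, which is why it pairs naturally with the swapped form appearing in (i), whereas $\TTT$ uses $c(x,y)$, pairing naturally with the unswapped domination inequality. Once this bookkeeping is in place, the three conditions collapse to a single statement.
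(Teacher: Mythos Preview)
Your proof is correct and is exactly the unpacking the paper has in mind when it says the proposition is ``a direct consequence of the definitions''; indeed the equivalence (i)$\Leftrightarrow$(ii) is spelled out in just this way later in the appendix (Proposition~\ref{HH}(i)). There is nothing to add.
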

 The more analytical denomination of sub-solution is useful because it allows to introduce the notion of being strict at some point:
\begin{df}
 \rm Consider $x_0\in X$ and $u\<c+\alpha [0]$ a critical sub-solution. 
We will say that $u$ is strict at $(x,y)\in X^2$ if and only if 
$$u(x)-u(y)< c(y,x)+\alpha [0].$$
We will say that $u$ is strict at $x\in X$ if
\begin{equation*}
\forall y\in X,  u(y)-u(x)<c(x,y)+\alpha [0] \; 
\mathrm{ and }\;
u(x)-u(y)<c(y,x)+\alpha [0]  .
\end{equation*}

\end{df}

We first give a characterization of continuous strict sub-solutions.
 \begin{pr}\label{strict}
A \textbf{continuous} sub-solution $u$ is strict at $x$ if and only if
$u(x)<\ttt u(x)+\alpha [0]$ and $u(x)>\TTT u(x)-\alpha [0]$.
 \end{pr}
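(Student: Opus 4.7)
The easy direction unpacks the definitions of $\ttt$ and $\TTT$. If $u(x)<\ttt u(x)+\alpha[0]$, then since the infimum lower-bounds every term, $u(x)-\alpha[0]<\inf_y[u(y)+c(y,x)]\leqslant u(y)+c(y,x)$ for every $y\in X$, giving $u(x)-u(y)<c(y,x)+\alpha[0]$. Symmetrically, $u(x)>\TTT u(x)-\alpha[0]$ unpacks to $u(y)-u(x)<c(x,y)+\alpha[0]$ for every $y$. These are exactly the two defining conditions of strictness at $x$, and this direction uses neither continuity nor compactness.

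For the converse, assume $u$ is continuous and strict at $x$. Since $u$ is a sub-solution, $u(x)\leqslant \ttt u(x)+\alpha[0]$ automatically, so I argue by contradiction and assume equality, i.e.\ $\inf_y[u(y)+c(y,x)]=u(x)-\alpha[0]$. Choose a minimizing sequence $(y_n)$ with $u(y_n)+c(y_n,x)\to u(x)-\alpha[0]$. The plan is to extract a subsequence $y_{n_k}\to y^*\in X$, and then use continuity of $u$ and of $c$ to obtain $u(y^*)+c(y^*,x)=u(x)-\alpha[0]$, that is, $u(x)-u(y^*)=c(y^*,x)+\alpha[0]$, contradicting strictness at $x$ applied at $y=y^*$. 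The $\TTT$ half of the statement follows by the symmetric argument with the supremum replacing the infimum.

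The crux is producing the limit point $y^*$, for which one must show that $(y_n)$ stays in a bounded subset of $X$ so that compactness of closed balls applies. This reduces to coercivity of the map $y\mapsto u(y)+c(y,x)$, and the key observation is that the $B$-length-space-at-scale-$K$ structure of $X$ combined with uniform boundedness of $c$ forces any sub-solution to have at most linear growth. Indeed, any two points of $X$ can be joined by a chain of intermediate points each within distance $K$ of the next, and of total length comparable to their mutual distance; along each short step the domination inequality gives an increment of $u$ bounded by $A(K)+\alpha[0]$, and summing yields $|u(y)-u(z)|\leqslant L\,d(y,z)+M$ for constants $L,M$ depending only on $B$, $K$, $A(K)$, and $\alpha[0]$. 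Picking $k>L$ in the uniform super-linearity bound $c(y,x)\geqslant k\,d(y,x)-C(k)$ then makes $u(y)+c(y,x)\to+\infty$ as $d(y,x)\to\infty$, so $(y_n)$ is necessarily bounded.

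The main obstacle is the linear-growth lemma for continuous sub-solutions, which is the mechanism by which the non-compactness of $X$ is tamed; once it is in hand, the rest of the argument is a routine compactness-plus-continuity manoeuvre that converts the pointwise strict inequality defining strictness at $x$ into the uniform strict inequality for the semi-groups.
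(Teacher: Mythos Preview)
Your proof is correct and follows essentially the same route as the paper: both hinge on the coercivity of $y\mapsto u(y)+c(y,x)$, established via the linear-growth estimate for sub-solutions (the paper's Lemma~\ref{lip}) combined with super-linearity of $c$, together with compactness of closed balls and continuity of $u$ to guarantee the infimum is attained. The only difference is cosmetic: your contradiction framing is superfluous, since the coercivity argument shows the infimum is achieved regardless of whether equality holds, and once a minimizer $y^*$ exists the strict inequality $u(x)-u(y^*)<c(y^*,x)+\alpha[0]$ gives $u(x)<\ttt u(x)+\alpha[0]$ directly, as the paper does.
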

 \begin{proof}

By definition, if $u$ is strict at $x$ then 
$$\forall y\in X,
u(x)-u(y)< c(y,x)+\alpha [0].$$
 In the appendix (\ref{HH} and \ref{apriori}), it is shown that the function
$y\mapsto c(y,x)+\alpha [0]-u(y)+u(x)$ tends to $+\infty$ when $\d(x,y)$ tends to 
$+\infty$. Since closed balls are compact, by continuity of $u$, the infimum in the definition of $\ttt$ is achieved. Therefore we must have 
$$u(x)<\ttt
u(x)+\alpha [0].$$
  Similarly, if for every $y\in X$,
$u(y)-u(x)<c(x,y)+\alpha [0]$ then
$$u(x)>\sup_{y\in X} u(y)-c(x,y)-\alpha [0]=\TTT u(x)-\alpha [0].$$
 The converse is clear.
\end{proof}
Before going any further, let us give some definitions:
\begin{df}\label{d:aub2}\rm
 Let $u:X\rightarrow \R{}$ verify $u\<c+\alpha [0]$. We will say that
 a chain $(x_i)_{0\leqslant i\leqslant n}$ of points in $X$ is $(u,c,\alpha
 [0])$-calibrated if 
$$u(x_n)=
 u(x_0)+c(x_0,x_1)+\cdots+c(x_{n-1},x_n)+n\alpha [0].$$
 Notice that a
 sub-chain formed by consecutive elements of a calibrated chain is again calibrated since $u\<c+\alpha[0]$.  

Following Bernard and Buffoni \cite{Be} we will call
 Aubry set of $u$, the subset $\AA_u$ of $\XZ$ consisting of the
 sequences whose finite sub-chains are $(u,c,\alpha
 [0])$-calibrated.  The projected Aubry set of $u$ is
$$\A _u=\{x\in X,\exists (x_n)_{n\in \mathbb{Z}} ,(u,c,\alpha
[0])\textrm{-calibrated with }x_0=x\}. $$
 The Aubry set is
$$\AA=\bigcap_{u\<c+\alpha [0]}\AA _{u}.$$
 The projected Aubry
set is
 $$\A=\bigcap_{u\<c+\alpha [0]}\A _{u},$$
  where in both cases, the
intersection is taken over all critically dominated functions.
\end{df}

We begin by a very simple lemma that will be of great use:

\begin{lm}\label{trivial}
 Let $u\< c+\alpha[0]$ be a critically dominated function and $(x,y)\in X^2$. If the following identity is verified: 
$$u(x)-u(y)=c(y,x)+\alpha[0],$$
then $u(x)=\ttt u(x)+\alpha[0]$.
If the following identity is verified
$$\ttt u(x)-\ttt u(y)=c(y,x)+\alpha[0],$$ 
then $u(y)=\ttt u(y)+\alpha[0]$ and $\ttt u(x)=u(y)+c(y,x)$.
\end{lm}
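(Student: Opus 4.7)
The plan is straightforward: both parts follow by sandwiching the relevant quantities between the sub-solution inequality (which gives a universal lower bound of the form $u \leq \ttt u + \alpha[0]$) and the specific upper bound obtained by plugging the given point into the infimum defining $\ttt$.

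For the first statement, I would start by recording the two inequalities at $x$. Because $u \< c + \alpha[0]$ is equivalent (as noted in the proposition just above) to $u \leq \ttt u + \alpha[0]$, we already have
\[
\ttt u(x) + \alpha[0] \geq u(x).
\]
For the reverse inequality, I would use $y$ as a test point in the infimum: $\ttt u(x) \leq u(y) + c(y,x)$. Rewriting the hypothesis as $u(y) + c(y,x) = u(x) - \alpha[0]$, this immediately gives $\ttt u(x) \leq u(x) - \alpha[0]$, and the two inequalities combine to yield $\ttt u(x) + \alpha[0] = u(x)$.

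For the second statement, the key preliminary remark is the same sub-solution inequality applied at $y$: $u(y) \leq \ttt u(y) + \alpha[0]$. Now plug $y$ into the infimum defining $\ttt u(x)$ to obtain $\ttt u(x) \leq u(y) + c(y,x)$. Substituting the hypothesis $\ttt u(x) = \ttt u(y) + c(y,x) + \alpha[0]$ into the left-hand side of this last inequality and cancelling $c(y,x)$ gives $\ttt u(y) + \alpha[0] \leq u(y)$. Combined with the universal inequality this forces the equality $u(y) = \ttt u(y) + \alpha[0]$. Finally, feeding this back into the hypothesis gives $\ttt u(x) = \ttt u(y) + c(y,x) + \alpha[0] = u(y) + c(y,x)$, which is the second claim.

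There is no real obstacle here: the only slightly subtle point is making sure to invoke the characterization of critical sub-solutions in the form $u \leq \ttt u + \alpha[0]$, and to recognize that the hypothesis in each part saturates the infimum in the definition of $\ttt$. No compactness or continuity assumptions on $u$ are needed, since we only use $y$ as a single test point rather than an attaining minimizer.
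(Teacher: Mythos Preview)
Your proof is correct and follows essentially the same approach as the paper: both parts hinge on the sub-solution inequality $u \leqslant \ttt u + \alpha[0]$ together with the trivial upper bound $\ttt u(x) \leqslant u(y) + c(y,x)$ obtained by taking $y$ as a test point in the infimum. The paper presents the second part as the single chain $\ttt u(x) = \ttt u(y) + c(y,x) + \alpha[0] \geqslant u(y) + c(y,x) \geqslant \ttt u(x)$, from which both equalities drop out; your argument is the same chain unpacked in a different order.
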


\begin{proof}
 The first part is straightforward from the definitions.
For the second point write
$$\ttt u(x)=\ttt u(y)+c(y,x)+\alpha [0]\geqslant
 u(y)+c(y,x)\geqslant \ttt u(x)$$
therefore, all inequalities must be equalities which proves the lemma.
\end{proof}

The following lemma, along with the fact that the image by the Lax-Oleinik semi-group of a dominated
function is continuous (cf. \ref{HH}), show that all
the intersections in the definitions of the Aubry sets and projected Aubry sets may be taken on continuous functions.
\begin{pr}\label{egalite aubry}
 Let $u\<c+\alpha [0]$ be a dominated function, then $\AA_u=\AA_{\ttt u}$.
In particular, we also have
$\A_u=\A_{\ttt u}.$
\end{pr}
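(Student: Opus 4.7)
The plan is to show the two inclusions $\AA_u \subseteq \AA_{\ttt u}$ and $\AA_{\ttt u} \subseteq \AA_u$ separately, both as essentially bookkeeping consequences of Lemma \ref{trivial} applied to each consecutive pair in a bi-infinite calibrated sequence. First I would record that $\ttt u$ really is a critical sub-solution (so that $\AA_{\ttt u}$ makes sense): from $u \leq \ttt u + \alpha[0]$ and the monotonicity of $\ttt$, applying $\ttt$ gives $\ttt u \leq \ttt\ttt u + \alpha[0]$, which by the previous proposition means $\ttt u \prec c+\alpha[0]$.

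For $\AA_u \subseteq \AA_{\ttt u}$, fix $(x_n)_{n\in\mathbb{Z}} \in \AA_u$. Every consecutive pair $(x_j,x_{j+1})$ satisfies the one-step calibration $u(x_{j+1}) - u(x_j) = c(x_j,x_{j+1}) + \alpha[0]$, so by the first part of Lemma \ref{trivial} we have $u(x_{j+1}) = \ttt u(x_{j+1}) + \alpha[0]$. Because the sequence is bi-infinite, every index occurs as a ``right endpoint'' of some consecutive pair, so this identity holds for \emph{every} $n\in\mathbb{Z}$. Plugging $u(x_n) = \ttt u(x_n) + \alpha[0]$ into the calibration identity for any finite subchain $x_k,\ldots,x_{k+m}$ turns it into the corresponding identity for $\ttt u$, which is precisely what it means for the subchain to be $(\ttt u, c, \alpha[0])$-calibrated.

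For the reverse inclusion $\AA_{\ttt u} \subseteq \AA_u$, I would run the symmetric argument using the second part of Lemma \ref{trivial}. Given $(x_n) \in \AA_{\ttt u}$, each consecutive pair gives $\ttt u(x_{j+1}) - \ttt u(x_j) = c(x_j,x_{j+1}) + \alpha[0]$, so the lemma yields $u(x_j) = \ttt u(x_j) + \alpha[0]$; again by bi-infiniteness every index $n\in\mathbb{Z}$ occurs as a ``left endpoint'' so this holds for all $n$. Substituting back into the $\ttt u$-calibration identity for a finite subchain produces the $u$-calibration identity.

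The statement about projected Aubry sets is then immediate by projecting. I do not expect any serious obstacle here: the whole argument is the remark that a calibration chain for $u$ simultaneously calibrates $\ttt u$ (shifted by $\alpha[0]$), and Lemma \ref{trivial} makes this precise. The one small point one must not overlook is that Lemma \ref{trivial} only gives the translation identity $u = \ttt u + \alpha[0]$ at one endpoint of the pair in each direction, which is why bi-infiniteness of the sequence is essential to get the identity at every $x_n$.
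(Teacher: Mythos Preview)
Your proof is correct and follows essentially the same route as the paper: both inclusions are obtained by applying the two parts of Lemma~\ref{trivial} to each consecutive pair in the bi-infinite sequence, yielding $u(x_n)=\ttt u(x_n)+\alpha[0]$ at every index, and then substituting. Your added remark that $\ttt u$ is itself critically dominated and your explicit emphasis on why bi-infiniteness is needed are welcome clarifications but do not change the argument.
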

\begin{proof}
 First we prove the inclusion $\AA_u\subset \AA_{\ttt u}$. Let us consider the sequence  $(x_n)_{n\in
   \mathbb{Z}}\in \AA_u$. Since $u$ is dominated and the sequence
 $(x_n)_{n\in \mathbb{Z}}\in \AA_u$ is ($u,c,\alpha [0]$)-calibrated we have for all $k\in \mathbb{Z}$
$$u(x_{k+1})= u(x_{k})+c(x_{k},x_{k+1})+\alpha [0],$$
 therefore lemma \ref{trivial} yields
$$\forall k\in \mathbb{Z},\ttt u(x_{k+1})+\alpha [0]=u(x_{k+1}).$$
 Therefore, the
 sequence $(x_n)_{n\in \mathbb{Z}}$ is ($\ttt u,c,\alpha
 [0]$)-calibrated and belongs to  $\AA_{\ttt u}$.

 We now prove the reverse
 inclusion $\AA_{\ttt u}\subset \AA_u$. Let $(x_n)_{n\in \mathbb{Z}}\in
 \AA_{\ttt u}$. We have that for any $k\in \mathbb{Z}$
$$\ttt u(x_{k+1})=\ttt u(x_{k})+c(x_{k},x_{k+1})+\alpha [0],$$
therefore using the second part of \ref{trivial}
 $$\forall k\in \mathbb{Z}, u(x_{k})=\ttt u(x_{k})+\alpha [0],$$ 
 and the sequence $(x_n)_{n\in \mathbb{Z}}$ is
 ($u,c,\alpha [0]$)-calibrated.
\end{proof}
Here is a lemma that will be useful in the sequel:
\begin{lm}\label{I}
 There is a continuous function $u\<c+\alpha [0]$ such that
 $\AA_u=\AA$.
\end{lm}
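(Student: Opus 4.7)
The plan is to exhibit $u$ as a convex combination of a countable family of continuous critical sub-solutions whose individual Aubry sets already intersect down to $\AA$. First, by Proposition \ref{egalite aubry} together with the fact recalled in the appendix (cf.\ \ref{HH}) that $\ttt v$ is continuous whenever $v\<c+\alpha[0]$, the intersection defining $\AA$ may be restricted to continuous critical sub-solutions. For any such continuous $v$, the complement $\XZ\setminus\AA_v$ is open in the product topology: the failure of calibration of a finite sub-chain reads
$$v(x_j)-v(x_i)<\sum_{k=i}^{j-1}c(x_k,x_{k+1})+(j-i)\alpha[0],$$
which for each pair $(i,j)$ is an open condition depending continuously on finitely many coordinates. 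Since $X$ has compact closed balls it is $\sigma$-compact, hence $\XZ$ with the product topology is separable metric, in particular Lindel\"of. The open cover $\{\XZ\setminus\AA_v\}_v$ of $\XZ\setminus\AA$ thus admits a countable subcover, yielding continuous sub-solutions $(u_n)_{n\geqslant 0}$ with $\AA=\bigcap_n\AA_{u_n}$.

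Next I would normalize and average. After replacing each $u_n$ by $u_n-u_n(x_0)$ for a fixed base point $x_0\in X$, domination yields the uniform bound
$$|u_n(y)|\leqslant \max\bigl(c(x_0,y),c(y,x_0)\bigr)+\alpha[0],$$
independent of $n$ on every compact subset. Setting $u=\sum_{n\geqslant 0}2^{-n-1}u_n$, the Weierstrass M-test gives uniform convergence on compacts, so $u$ is continuous; and $u\<c+\alpha[0]$ by convexity of the class of dominated functions.

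It remains to verify $\AA_u=\bigcap_n\AA_{u_n}$. The inclusion $\supseteq$ is immediate since a convex combination of functions calibrated on a given sub-chain is again calibrated there. For the reverse, if $(x_k)\in\AA_u$ then on each finite sub-chain the equality $u(x_j)-u(x_i)=\sum_{k=i}^{j-1}c(x_k,x_{k+1})+(j-i)\alpha[0]$, combined with the $\leqslant$-inequality for each individual $u_n$ and the fact that the positive weights $2^{-n-1}$ sum to one, forces each summand to realize equality, so $(x_k)\in\AA_{u_n}$ for every $n$. The main obstacle is the countable extraction: once the Lindel\"of property of $\XZ$ is secured from the standing hypotheses on $X$, the rest is the standard convex-combination argument familiar from continuous weak KAM theory.
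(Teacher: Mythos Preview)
Your proof is correct. Both you and the paper reduce to a countable family of continuous critical sub-solutions and then form a convex combination, using the identical ``equality forced in each summand'' argument to conclude that $\AA_u=\bigcap_n\AA_{u_n}$. The difference lies in how the countable family is extracted. The paper works on the \emph{function} side: it invokes separability of $\s=\hh{\alpha[0]}$ in the compact-open topology, takes a \emph{dense} sequence $(u_n)$, and then needs a closure step---using continuity of $c$ and of the $u_n$---to pass from ``calibrated by every $u_n$'' to ``calibrated by every $u'\in\s$''. You work on the \emph{sequence} side: observing that each $\AA_v$ is closed in $\XZ$ and that $\XZ$ is Lindel\"of (countable product of $\sigma$-compact metric spaces), you extract a countable subcover of $\XZ\setminus\AA$ directly, so no density or limiting argument is required afterwards. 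Your route is arguably more self-contained, since it bypasses both the separability of $\s$ and the passage-to-the-closure step; the paper's version has the minor by-product that calibration by $u$ immediately gives calibration by \emph{every} continuous sub-solution rather than just the chosen $u_n$, though the two conclusions are of course equivalent once $\AA=\bigcap_n\AA_{u_n}$ is in hand.
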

\begin{proof}
Let us consider the set $\s=\{u\in
C^0(X,\R{}),u\<c+\alpha [0]\}$ of continuous dominated functions . This set is separable for the compact
open topology so let $(u_n)_{n\in \mathbb{N}^*}$ be a sequence dense
in $\s$. Consider now $(a_n)_{n\in \mathbb{N}^*}$ a sequence of
positive real numbers such that $\sum a_n=1$ and $u=\sum a_nu_n$
converges uniformly on each compact subset of $X$. To construct such a
sequence, one can for example fix an $x_0\in X$ and for any $n>1$, take
$a_n=\min\{2^{-n},1/(2^n\|u_n\|_{\infty,\overline{B(x_0,n)}})\}$
then take $a_1=1-\sum_{n>1}a_n>0$. The function $u$ is clearly
continuous and since $u$ is a convex sum of elements of $\s$, one can
easily verify that $u\in\s$. Moreover, since each $u_n$ is dominated,
if a chain is $(u,c,\alpha [0])$-calibrated then it is $(u_n,c,\alpha
[0])$-calibrated for every $n\in \mathbb{N}^*$.  As a matter of fact,
if 
$$u(x_{n'})-u(x_n)=\sum_{k\in \mathbb{N}^*}
a_k(u_k(x_{n'})-u_k(x_n))=(n'-n)\alpha
[0]+\sum_{i=n}^{n'-1}c(x_i,x_{i+1}),$$
 since for each $k$ the following
inequality holds
$$\forall k\in
\mathbb{N}^*,u_k(x_{n'})-u_k(x_n)\leqslant (n'-n)\alpha
       [0]+\sum_{i=n}^{n'-1}c(x_i,x_{i+1}),$$
 and considering that
       $\sum a_n =1$ and $a_n>0$, the inequalities above must be equalities
$$\forall k\in \mathbb{N}^*,u_k(x_{n'})-u_k(x_n)= (n'-n)\alpha
[0]+\sum_{i=n}^{n'-1}c(x_i,x_{i+1}).$$
 Finally, since the $u_k$ are
dense in $\s$ we obtain
$$\forall u'\in \s,u'(x_{n'})-u'(x_n)= (n'-n)\alpha
[0]+\sum_{i=n}^{n'-1}c(x_i,x_{i+1}).$$ 
Hence such a calibrated chain is calibrated by every element of $\s$. In particular, for every $ u'
\in \s$, we have $\AA_{u}\subset \AA_{u'}$ therefore $\AA_u\subset \AA$. The reverse
inclusion follows from the definition of $\AA$. Similarly, projecting
on $X$, we get that $\A_u=\A$.
\end{proof}
As an immediate consequence we get the following:
\begin{co}
 The following equality holds:
$$\A=p(\AA),$$
 where $p$ denotes the canonical projection from $\XZ$ to $X$.
\end{co}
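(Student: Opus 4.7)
My plan is to observe that the corollary follows almost immediately from Lemma~\ref{I}, with the only delicate point being that projection does not in general commute with intersection.

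The inclusion $p(\AA)\subset \A$ is essentially automatic: I would just unwind the definitions and note that if $(x_n)_{n\in\mathbb{Z}}$ lies in $\AA_u$ for every critical sub-solution $u$, then its zeroth coordinate lies in every $\A_u=p(\AA_u)$, hence in the intersection $\A$.

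The reverse inclusion $\A\subset p(\AA)$ is the part that requires a real input, because in general one only has $p\bigl(\bigcap_i \AA_{u_i}\bigr)\subset \bigcap_i p(\AA_{u_i})$, with possibly strict inclusion. To bridge this gap I would invoke the single continuous critical sub-solution $u$ provided by Lemma~\ref{I}, which satisfies $\AA_u=\AA$, and whose proof also records the projected version $\A_u=\A$. Then, given $x\in \A=\A_u$, the very definition of $\A_u$ yields a $(u,c,\alpha[0])$-calibrated bi-infinite chain $(x_n)_{n\in\mathbb{Z}}$ with $x_0=x$; this chain lies in $\AA_u=\AA$, so $x=p\bigl((x_n)_{n\in\mathbb{Z}}\bigr)\in p(\AA)$.

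The only real obstacle was the existence of the common calibrating sub-solution $u$ supplied by Lemma~\ref{I}; once that lemma is available the corollary reduces to a one-line manipulation of definitions, and there is no further subtlety to handle.
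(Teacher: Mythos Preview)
Your proof is correct and follows exactly the route the paper intends: the inclusion $p(\AA)\subset\A$ is the trivial $p\bigl(\bigcap_u\AA_u\bigr)\subset\bigcap_u p(\AA_u)$, and the reverse inclusion is obtained from Lemma~\ref{I} by writing $\A=\A_u=p(\AA_u)=p(\AA)$ for the special $u$ constructed there. The paper does not spell out the argument beyond calling it an ``immediate consequence'' of Lemma~\ref{I}, and what you wrote is precisely that immediate consequence made explicit.
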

The following lemma is useful:
\begin{lm}\label{IIII}
 If $u\<c+\alpha [0]$ and $x\in X$ then 
$x$ is in $\A_u$ implies 
$$\forall p\in
 \mathbb{N}, \T{p}u(x)+p\alpha[0]=u(x)=\TT{p}u(x)-p\alpha[0].$$
Moreover, if $u$ is continuous then the converse is true, that is if
$$\forall p\in
 \mathbb{N}, \T{p}u(x)+p\alpha[0]=u(x)=\TT{p}u(x)-p\alpha[0],$$
then $x\in \A_u$.
\end{lm}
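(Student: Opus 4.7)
For the forward direction, I assume $x\in\A_u$ and pick a $(u,c,\alpha[0])$-calibrated chain $(x_n)_{n\in\mathbb{Z}}$ with $x_0=x$. The inequality $\T{p}u(x)+p\alpha[0]\geq u(x)$ is obtained by iterating $u\leq\ttt u+\alpha[0]$, which is just a rephrasing of $u\<c+\alpha[0]$. For the reverse inequality, I test the infimum defining $\T{p}u(x)$ on the backward segment $(x_{-p},\dots,x_{-1})$: calibration gives $u(x_{-p})+\sum_{i=-p}^{-1}c(x_i,x_{i+1})=u(x)-p\alpha[0]$, hence $\T{p}u(x)\leq u(x)-p\alpha[0]$. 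The $\TT{p}$ identity is entirely symmetric, using the forward segment $(x_1,\dots,x_p)$ as a candidate in the supremum.

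For the converse, I assume $u$ is continuous and that both equalities hold for every $p\in\mathbb{N}$. By the same coercivity and compactness argument used in the proof of Proposition \ref{strict} (applied iteratively, and relying on \ref{HH} and \ref{apriori}), the infimum defining $\T{p}u(x)$ is attained by some $(y^{(p)}_{-p},\dots,y^{(p)}_{-1})$. Setting $y^{(p)}_0=x$, the hypothesis $\T{p}u(x)=u(x)-p\alpha[0]$ combined with the telescoping sum
$$u(x)-u(y^{(p)}_{-p})=\sum_{i=-p}^{-1}\bigl(u(y^{(p)}_{i+1})-u(y^{(p)}_i)\bigr)\leq\sum_{i=-p}^{-1}c(y^{(p)}_i,y^{(p)}_{i+1})+p\alpha[0]$$
forces each consecutive inequality to be an equality, so the chain $(y^{(p)}_{-p},\dots,y^{(p)}_0)$ is $(u,c,\alpha[0])$-calibrated.

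To extract a backward calibrated half-chain $(y_{-k})_{k\geq 0}$, I fix $k$ and observe that the set of minimizers of $\T{k}u(x)$ is compact in $X^k$ by the same coercivity estimates. The truncation $(y^{(p)}_{-k},\dots,y^{(p)}_{-1})$ of a length-$p$ chain with $p\geq k$ lies in this compact set, because every sub-chain of a calibrated chain is calibrated. A diagonal extraction then produces a sequence $(y_{-k})_{k\geq 0}$ with $y_0=x$ such that each finite segment is a limit of calibrated chains; continuity of $u$ and $c$ promotes each $(y_{-k},\dots,y_0)$ to being itself $(u,c,\alpha[0])$-calibrated. The symmetric argument applied to $\TT{p}u(x)$ yields a forward calibrated half-chain, and concatenating at $y_0=x$ gives a bi-infinite $(u,c,\alpha[0])$-calibrated chain through $x$, so $x\in\A_u$.

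The main obstacle is the diagonal extraction: both the compactness of the minimizer sets and the stability of calibration under pointwise limits rely crucially on the continuity of $u$, which is precisely why the converse requires this hypothesis and would fail in general for merely dominated functions.
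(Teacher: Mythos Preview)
Your argument is correct and follows essentially the same route as the paper's proof: both directions rely on testing the Lax--Oleinik semi-groups against segments of a calibrated chain, and the converse uses attainment of the infimum (the paper cites point (iv) of Proposition~\ref{HH}), the a~priori compactness of minimizers (Lemma~\ref{apriori}), a diagonal extraction, and continuity of $u$ and $c$ to pass calibration to the limit. The only organizational difference is that the paper builds for each $p$ a single chain $(x^p_{-p},\dots,x^p_0=x,\dots,x^p_p)$ by combining a backward minimizer with a forward maximizer, shows it is calibrated by summing the two identities, and performs one diagonal extraction; you instead extract the backward and forward half-chains separately and concatenate them at $x$, which is equally valid since calibration of the two halves through a common point immediately gives calibration of the full chain.
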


\begin{proof}
 If $(x_n)_{n\in\mathbb{Z}}\in \XZ$
 is calibrating for $u$ then for every integer $p$
$$u(x_p)-u(x_0)=p\alpha [0]+\sum_{i=0}^{p-1}c(x_i,x_{i+1}).$$
Therefore, the domination hypothesis gives us that
$$\forall p\in \mathbb{N}^*,\T{p}u(x_0)+p\alpha[0]=u(x_0),$$
and
$$\forall p\in \mathbb{N}^*,\TT{p}u(x_0)-p\alpha[0]=u(x_0).$$

Conversely, let us assume that for every $p\in \mathbb{N}$, 
$$\T{p}u(x)+p\alpha[0]=u(x)=\TT{p}u(x)-p\alpha[0].$$
 then by successive applications of point (iv) of proposition \ref{atteint} we
 can find  chains $(x_{-p}^p,\ldots
 x_{-1}^p,x_0^p=x,x_1^p,\ldots,x_p^p)$ such that
 $$\forall p \in \mathbb{N},\T{p}u(x)= u(x_{-p}^p)+\sum_{i=-p}^{-1}c(x_i^p,x_{i+1}^p),$$
 and
 $$\forall p \in \mathbb{N},\TT{p}u(x)= u(x_{p}^p)-\sum_{i=0}^{p-1}c(x^p_{i},x^p_{i+1}).$$
 Using the assumption we made, we obtain that 
 $$\forall p \in \mathbb{N},u(x)-u(x_{-p}^p)= \sum_{i=-p}^{-1}c(x^p_i,x^p_{i+1})+p\alpha[0],$$
 and
 $$\forall p \in \mathbb{N},u(x_{p}^p)-u(x)= \sum_{i=0}^{p-1}c(x^p_{i},x^p_{i+1})+p\alpha[0].$$
 Summing these two last equalities we get
 $$\forall p \in \mathbb{N},u(x_{p}^p)-u(x_{-p}^p)= \sum_{i=-p}^{p-1}c(x^p_{i},x^p_{i+1})+2p\alpha[0],$$
which proves that the chains $(x_{-p}^p,\ldots
 x_{-1}^p,x_0^p=x,x_1^p,\ldots,x_p^p)$ are calibrating for $u$.
 
  By \ref{apriori}, for every integer $n\in \mathbb{Z}$, the sequence $(x_n^p),p\geqslant |n|$ is bounded hence, by a diagonal extraction ($p_l\rightarrow +\infty$ as $l\rightarrow +\infty$) we can assume each $(x_n^{p_l}),p_l\geqslant |n|$ converges to a $x_n\in X$.
  Let us now fix two integers $m$ and $n$ such that $m\leqslant n$. If $p_l\geqslant |m|,|n|$ we have
  $$u(x_{n}^{p_l})-u(x_{m}^{p_l})= \sum_{i=m}^{n-1}c(x^{p_l}_{i},x^{p_l}_{i+1})+(n-m)\alpha[0],$$
  letting $p_l$ go to $+\infty$, using the continuity of $u$, the following holds
    $$u(x_{n})-u(x_{m})= \sum_{i=m}^{n-1}c(x_{i},x_{i+1})+(n-m)\alpha[0].$$
    Since $m$ and $n$ were taken arbitrarily, this proves that
   the sequence $(x_k)_{k\in \mathbb{Z}}$ is calibrating for $u$ and therefore is the bi-infinite chain that we are looking for.
\end{proof}
Let us define yet another Aubry set :
\begin{df}
Let $S$ from $\XZ$ to $\XZ$ be the shift operator.  We define
$$\widehat{\A}_u =\{(x,y)\in X^2 ,\exists z\in \AA_u ,x=p(z)\; \mathrm{
  and }\; y=p\circ S(z)\},$$ 
and 
$$\widehat{\A} =\{(x,y)\in X^2 ,\exists z\in \AA ,x=p(z)\; \mathrm{
  and }\; y=p\circ S(z)\}.$$
 \end{df}

We are now ready to prove the following theorem, which in particular is stronger than theorem \ref{Strict}. The proof is inspired from the unpublished manuscript \cite{fasi3}.
\begin{Th}\label{SStrict}
 For every sub-solution $u$ there is a continuous sub-solution $u'$ which is strict at every $(x,y)\in X^2-\widehat{\A}_u$ and such that $u=u'$ on $\A_u$.
 There is a continuous sub-solution which is strict at every $(x,y)\in X^2-\widehat{\A}$.
\end{Th}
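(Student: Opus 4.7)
The plan is to adapt the separable convex-combination construction of Lemma \ref{I}, restricted to continuous sub-solutions that already agree with $u$ on the projected Aubry set. Define
\[
\s_u = \bigl\{v \in C^0(X,\R{}) : v \prec c + \alpha[0] \text{ and } v_{|\A_u} = u_{|\A_u}\bigr\}.
\]
This set is convex and nonempty: for each $k\ge 1$ the iterates $\T{k}u + k\alpha[0]$ and $\TT{k}u - k\alpha[0]$ lie in $\s_u$, being continuous by the appendix, dominated by the same monotonicity argument used throughout Section 1, and equal to $u$ on $\A_u$ by Lemma \ref{IIII}. Since closed balls of $X$ are compact, $X$ is $\sigma$-compact, so $C^0(X,\R{})$ and its subset $\s_u$ are separable for the compact-open topology. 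Pick a countable dense sequence $(v_n) \subset \s_u$ and positive weights $a_n$ with $\sum a_n = 1$ such that $u' := \sum_n a_n v_n$ converges uniformly on each closed ball (exactly as in Lemma \ref{I}). Then $u'$ is continuous and, being a convex combination of elements of $\s_u$, lies in $\s_u$; in particular $u'$ is a sub-solution with $u'=u$ on $\A_u$.

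Strictness outside $\widehat{\A}_u$ will follow the pattern of Lemma \ref{I}. Suppose $u'(x)-u'(y)=c(y,x)+\alpha[0]$ for some pair $(x,y)$. Since each term $v_n(x)-v_n(y)$ in the convex combination is bounded above by $c(y,x)+\alpha[0]$ and all weights $a_n$ are strictly positive, equality forces $v_n(x)-v_n(y) = c(y,x)+\alpha[0]$ for every $n$, and density of $(v_n)$ in $\s_u$ extends this equality to every $v \in \s_u$.

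The crux is to deduce $(x,y) \in \widehat{\A}_u$ from this universal saturation. Specializing to $v = \T{k}u + k\alpha[0]$ yields $\T{k}u(x) - \T{k}u(y) = c(y,x) + \alpha[0]$ for every $k \ge 1$; applying the second part of Lemma \ref{trivial} to $\T{k-1}u$ (so that $\ttt(\T{k-1}u) = \T{k}u$) gives $\T{k-1}u(y) = \T{k}u(y) + \alpha[0]$ together with the link $\T{k}u(x) = \T{k-1}u(y) + c(y,x)$. Telescoping produces $u(y) = \T{k}u(y) + k\alpha[0]$ for every $k \ge 0$, and the symmetric argument applied to $v = \TT{k}u - k\alpha[0]$ gives $u(x) = \TT{k}u(x) - k\alpha[0]$ together with the matching link $\TT{k}u(y) = \TT{k-1}u(x) - c(y,x)$. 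These two one-sided identities supply, at each depth $k$, a backward calibrated chain realizing $\T{k}u(y)$ and a forward calibrated chain realizing $\TT{k}u(x)$ (cf. the realisation proposition in the appendix), and the two links express the calibrated edge between $y$ and $x$ that glues them into one chain of length $2k+1$ passing through the consecutive pair. A diagonal extraction identical to that of Lemma \ref{IIII}, using the a priori bounds from the appendix and compactness of closed balls, then produces a bi-infinite $(u,c,\alpha[0])$-calibrated sequence in which $x$ and $y$ occupy consecutive positions, so $(x,y) \in \widehat{\A}_u$. The second assertion of the theorem follows by applying the first to the continuous sub-solution from Lemma \ref{I}, for which $\widehat{\A}_u = \widehat{\A}$. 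The main difficulty in the whole argument is this last step, where one must combine the backward and forward one-sided realizations into a single bi-infinite chain while tracking the position of the pair $(x,y)$.
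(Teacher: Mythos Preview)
Your approach is close in spirit to the paper's, and most of the steps are sound, but there is a genuine gap at the gluing step. You derive, via Lemma~\ref{trivial}, the one-sided identities $u(y)=\T{k}u(y)+k\alpha[0]$ and $u(x)=\TT{k}u(x)-k\alpha[0]$, together with the links $\T{k}u(x)=\T{k-1}u(y)+c(y,x)$ and $\TT{k}u(y)=\TT{k-1}u(x)-c(y,x)$. These give you a backward $u$-calibrated chain ending at $y$ and a forward $u$-calibrated chain starting at $x$. But splicing them through the edge $y\to x$ into a single $(u,c,\alpha[0])$-calibrated chain requires the equality $u(x)-u(y)=c(y,x)+\alpha[0]$, and your links do \emph{not} provide this: they only say that $y$ realizes the infimum for $\ttt(\T{k-1}u)$ at $x$ and that $x$ realizes the supremum for $\TTT(\TT{k-1}u)$ at $y$, which concern the iterates, not $u$ itself. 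Indeed, from your identities one only obtains $\T{k}u(x)+k\alpha[0]=u(y)+c(y,x)+\alpha[0]\geqslant u(x)$ with possibly strict inequality, and symmetrically on the $\TTT$ side. Since $u$ may fail to lie in your set $\s_u$ (it need not be continuous), the saturation you propagate by density does not reach $u$.

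The paper closes this gap by first replacing $u$ with $\ttt u$ (legitimate by Proposition~\ref{egalite aubry}, which gives $\widehat{\A}_u=\widehat{\A}_{\ttt u}$), and then forming the convex combination directly from the iterates $\T{n}u$ and $\TT{n}u$, including the $n=0$ term $u$ itself. With $u$ continuous and present in the combination, the equality $u(x)-u(y)=c(y,x)+\alpha[0]$ is immediate, and the paper's chains are built backward from $x$ (through $y$) and forward from $x$, using $\T{n}u(x)+n\alpha[0]=u(x)$ rather than your identity at $y$. Once you insert the preliminary replacement $u\leadsto\ttt u$, your argument becomes correct; note also that the detour through a dense sequence in $\s_u$ is then unnecessary, since the only members of $\s_u$ you actually use are the iterates themselves.
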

\begin{proof}
  Replacing $u$ by $\ttt u$ (which does not change the Aubry set by  \ref{egalite aubry}) we can assume that $u$ is continuous. Consider the function 
$$u'=\sum_{n\in
   \mathbb{N}}a_n\T{n}u+\sum_{n\in \mathbb{N}^*}b_n\TT{n}u,$$ 
where the $a_n$ and the $b_n$ are chosen as in the proof of lemma \ref{I}, positive, 
 such that the sums above are convergent for the compact open topology and
 $\sum a_n+\sum b_n =1$.  For the same reasons as in the
 proof of \ref{I}, $u'$ is a continuous and critically dominated
 function.  
Let $(x,y)\in X^2$ verify
 $u'(x)-u'(y)=c(y,x)+\alpha [0]$. Since this equality implies the
 following ones (cf. the proof of \ref{I}) for all integers $n$
\begin{eqnarray*} 
&&\T{n}(u)(x)-\T{n}u(y)=c(y,x)+\alpha [0],\\
&&\TT{n}(u)(x)-\TT{n}u(y)=c(y,x)+\alpha [0].
\end{eqnarray*} 
By domination of $u$, we therefore have for every $n$
\begin{eqnarray}
\T{(n+1)}u(x)+\alpha [0]&=&\T{n}u(y)+c(y,x)+\alpha[0]\nonumber \\
&=&\T{n}u(x),\label{eq1comp}
\end{eqnarray}
and
\begin{eqnarray}
\TT{(n+1)}u(y)-\alpha [0]&=&\TT{n}u(x)-c(y,x)-\alpha[0]\nonumber \\
&=&\TT{n}u(y).\label{eq2comp}
\end{eqnarray}
Using the same argument as in the previous lemma (\ref{IIII}), by successive applications of \ref{atteint} we
 can find  chains $(x_{-n}^n,\ldots
 x_{-1}^n=y,x_0^n=x)$ such that
 $$\forall n \in \mathbb{N},\T{n}u(x)= u(x_{-n}^n)+\sum_{i=-n}^{-1}c(x_i^n,x^n_{i+1}),$$
 and chains $( x_{-1}^n=y,x_0^n=x,\ldots,x_n^n)$ such that
 $$\forall n \in \mathbb{N},\TT{n}u(x)= u(x_{n}^n)-\sum_{i=0}^{n-1}c(x^n_{i},x^n_{i+1}).$$
 Using \ref{eq1comp} and \ref{eq2comp}, we get that 
 $$\forall n \in \mathbb{N},u(x)-u(x_{-n}^n)= \sum_{i=-n}^{-1}c(x^n_i,x^n_{i+1})+n\alpha[0],$$
 and
 $$\forall n \in \mathbb{N},u(x_{n}^n)-u(x)= \sum_{i=0}^{n-1}c(x^n_{i},x^n_{i+1})+n\alpha[0].$$
 Summing these two last equalities we get
 $$\forall n \in \mathbb{N},u(x_{n}^n)-u(x_{-n}^n)= \sum_{i=-n}^{n-1}c(x^n_{i},x^n_{i+1})+2n\alpha[0],$$
which proves that the chains $(x_{-n}^n,\ldots
 x_{-1}^n=y,x_0^n=x,x_1^n,\ldots,x_n^n)$ are calibrating for $u$.
 \\
  By \ref{apriori}, for every integer $k\in \mathbb{Z}$, the sequence $(x_k^n),n\geqslant |k|$ is bounded hence, by a diagonal extraction ($n_l\rightarrow +\infty$ as $l\rightarrow +\infty$)  we can assume each $(x_k^{n_l}),n_l\geqslant |k|$ converges to a $x_k\in X$.
  Let us now fix two integers $m$ and $m'$ such that $m\leqslant m'$. If $n_l\geqslant |m|,|m'|$ we have
  $$u(x_{m'}^{n_l})-u(x_{m}^{n_l})= \sum_{i=m}^{m'-1}c(x_{i}^{n_l},x^{n_l}_{i+1})+(m'-m)\alpha[0],$$
  letting $n$ go to $+\infty$, using the continuity of $u$, the following holds
    $$u(x_{m'})-u(x_{m})= \sum_{i=m}^{m'-1}c(x_{i},x_{i+1})+(m'-m)\alpha[0].$$
    Since $m$ and $m'$ were taken arbitrarily, this proves that
   the sequence $(x_k)_{k\in \mathbb{Z}}$ is calibrating for $u$ and therefore  that $(x,y)\in \widehat{\A}_u$.
Therefore, $u'$ is a sub-solution strict at $X^2
-\widehat{\A}_u$. Moreover, by \ref{IIII} and since  $\sum a_n+\sum b_n =1$, $u$ and $u'$ coincide on $\A$ which finishes to prove the first part of the theorem.
 
 To prove the second part, pick $u$ such that $\A_u=\A$ which is possible
according to \ref{I}. The function $u'$ is strict outside of $\widehat{\A}$.
\end{proof}

\section{Towards a discrete analog of Ma\~ne's potential}
In the study of globally minimizing curves in Lagrangian dynamics, two functions appear naturally. The first one is used to study infinite orbits of the Euler-Lagrange flow and is Mather's Peierls' barrier which was introduced in the Lagrangian setting in \cite{Mat}. This barrier was studied in the discrete case in \cite{Be}. The other function is Ma\~ne's potential and was introduced in \cite{Man}. As it is proved in \cite{FaSi}, Ma\~ne's potential gives nice characterizations of the projected Aubry set in terms of differentiability and weak KAM solutions (see Theorems 4.3 and 5.3 in \cite{FaSi}). However, in the discrete setting, this notion seems less natural.

In this section, we propose two versions of Ma\~ne's potential. It appears that they are closely related. Moreover, by analogy with Fathi and Siconolfi's results, we characterize the Aubry set in terms of continuity of the potential. In order to stay consistent with the rest of the text, we will only consider the critical case. However, all the results of this section hold in the super-critical case (that is to consider the cost $c+\alpha$, $\alpha>\alpha[0]$). Moreover, in this section, let us stress the fact that $X$ and $c$ only need to satisfy the hypothesis of the beginning of the article being that $X$ is a  $B$-length space at scale $K$ for some constants $B$ and $K$ (see \ref{scale} for the exact definition) with compact closed balls and $c$ is continuous, super-linear and uniformly bounded (see conditions \ref{unif} and \ref{unifb} in the introduction).

 The following construction is inspired from Perron's method to
 construct viscosity solutions in PDE. It is also reminiscent of ideas of Gabriel Paternain and results obtained in \cite{FaSi}.
\begin{df}\rm 
 We define the potential 
$$\varphi (x,y)=\sup_{u\<c+\alpha [0]} u(y)-u(x),$$ 
where the supremum is taken over all critical sub-solutions (not necessarily continuous).
\end{df}
We begin with some properties.
\begin{pr}\label{pprevious}
 The potential satisfies the following properties:
 
 \begin{enumerate}
\item For all $(x,y)\in X^2$ we have $\varphi(x,y)\leqslant c(x,y)+\alpha [0]$. In particular, the potential is everywhere finite.
\item For all $x\in X$, the potential verifies $\varphi(x,x)=0$.
\item A function $u$ is critically dominated if and only if for all
  $(x,y)$ in $X^2$ we have $u(y)-u(x)\leqslant \varphi (x,y)$.
\item The function $\varphi$ verifies the triangular inequality that is for all $x,y,z$ in $X$ we have $\varphi(x,y)+\varphi(y,z)\geqslant \varphi(x,z)$.
\end{enumerate}
\end{pr}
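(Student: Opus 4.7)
The plan is to verify the four properties by direct manipulation of the supremum defining $\varphi$. The only prerequisite is that the class of critical sub-solutions is non-empty, which is guaranteed by Theorem \ref{kam}: any negative weak KAM solution furnishes an $\alpha[0]$-sub-solution, so the supremum is never vacuous. This non-emptiness, together with property (1), is what makes $\varphi$ a well-defined real-valued function, and all four assertions will follow from one-line manipulations.

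For (1), I would recall that by definition every critical sub-solution $u$ satisfies $u(y) - u(x) \leq c(x,y) + \alpha[0]$, then pass to the supremum on the left; combined with the existence of at least one sub-solution (which prevents the sup from being $-\infty$), this gives $\varphi(x,y) \in \mathbb{R}$ bounded above by $c(x,y) + \alpha[0]$. For (2), setting $y = x$ reduces each term of the supremum to $u(x) - u(x) = 0$, so $\varphi(x,x) = 0$. For (3), the forward implication is immediate because $u$ itself is one of the functions over which the supremum in $\varphi(x,y)$ is taken; for the converse, the hypothesis combined with (1) gives $u(y) - u(x) \leq \varphi(x,y) \leq c(x,y) + \alpha[0]$, which is exactly critical domination.

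For the triangular inequality (4), I would write, for an arbitrary critical sub-solution $u$, the trivial identity $u(z) - u(x) = \bigl(u(y) - u(x)\bigr) + \bigl(u(z) - u(y)\bigr)$ and bound the two summands on the right by $\varphi(x,y)$ and $\varphi(y,z)$ respectively. Since the resulting upper bound is independent of $u$, taking the supremum over $u$ on the left-hand side yields $\varphi(x,z) \leq \varphi(x,y) + \varphi(y,z)$. The whole proposition is essentially tautological, and the only mildly delicate point — really the only thing to be careful about — is ensuring that $\varphi$ is finite and not $-\infty$, which is precisely what Theorem \ref{kam} together with item (1) provides; beyond that I do not anticipate any substantive obstacle.
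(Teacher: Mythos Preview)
Your proof is correct and follows essentially the same approach as the paper, which also treats items (1), (2), and (4) as immediate from the definition and derives (3) by combining the definition of $\varphi$ with item (1). Your explicit invocation of Theorem~\ref{kam} to guarantee that the supremum is taken over a non-empty set is a welcome clarification that the paper leaves implicit.
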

In particular, this proves points (1) and (2) of theorem \ref{Mane}.
\begin{proof}

 Items 1. and 2. are clear. The third one comes from the
 fact that for any dominated function $u$ we clearly have that
 $$\forall (x,y)\in X^2,u(y)-u(x)\leqslant \varphi(x,y).$$
 For the reverse implication, since by the first point of the proposition, we have $\varphi(x,y)\leqslant c(x,y)+\alpha[0]$, any function which satisfies 
$$\forall(x,y)\in X^2,u(y)-u(x)\leqslant \varphi(x,y),$$
is necessarily critically dominated.
 The fourth point is clear
 from the definition. 
\end{proof}

Before going any further, let us state two simple lemmas that we will use throughout this section. The first one helps to understand how
to construct sub-solutions: 
\begin{lm}\label{entre}
 Let $u\<c+\alpha [0]$ and let a function $v$ that verifies the
 following inequalities
$$u\leqslant v\leqslant \ttt u+\alpha [0].$$
Then $v$ itself is a critical sub-solution: $v\<c+\alpha [0]$.
\end{lm}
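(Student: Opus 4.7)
The plan is to prove the statement by a short direct chain of inequalities, leveraging only the definition of $\ttt$ and the two hypotheses. The key observation is that the upper bound $v \leq \ttt u + \alpha[0]$ contains information about every point $x \in X$ via the infimum defining $\ttt$, while the lower bound $u \leq v$ is used at a single point.

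Fix $(x,y) \in X^2$. First I would rewrite the upper bound at $y$, using the definition of the Lax--Oleinik semi-group:
$$v(y) \;\leqslant\; \ttt u(y) + \alpha[0] \;=\; \inf_{z \in X}\bigl(u(z) + c(z,y)\bigr) + \alpha[0].$$
Since the infimum is taken over all $z \in X$, it is in particular bounded above by the value at $z = x$, giving
$$v(y) \;\leqslant\; u(x) + c(x,y) + \alpha[0].$$

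Next I would use the lower bound $u(x) \leqslant v(x)$, rearranged as $-v(x) \leqslant -u(x)$, and subtract it from the previous inequality:
$$v(y) - v(x) \;\leqslant\; u(x) + c(x,y) + \alpha[0] - u(x) \;=\; c(x,y) + \alpha[0].$$
Since $(x,y)$ was arbitrary, this establishes $v \prec c + \alpha[0]$.

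There is really no main obstacle here: the lemma is essentially a two-line bookkeeping argument, and the only subtlety is recognizing that the two inequalities in the hypothesis have to be applied at different points (the upper bound at $y$ via the definition of $\ttt$, the lower bound at $x$). The initial hypothesis $u \prec c + \alpha[0]$ is actually not even used in the argument, only the sandwich $u \leqslant v \leqslant \ttt u + \alpha[0]$; presumably it is included in the statement because only for such $u$ is the function $\ttt u + \alpha[0]$ known a priori to lie above $u$ (so that the sandwich can be non-vacuous).
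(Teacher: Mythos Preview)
Your proof is correct and is essentially the same argument as the paper's, just unpacked: the paper writes the chain $u\leqslant v\leqslant \ttt u+\alpha[0]\leqslant \ttt v+\alpha[0]$ using monotonicity of $\ttt$ and then invokes the equivalence $v\leqslant \ttt v+\alpha[0]\iff v\prec c+\alpha[0]$, whereas you expand both of these steps directly from the definition of $\ttt$. Your observation that the hypothesis $u\prec c+\alpha[0]$ is not actually used is also correct (and applies equally to the paper's proof).
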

\begin{proof}
 The proof is merely based on the monotonicity of the Lax-Oleinik semi-group
$$u\leqslant v\leqslant \ttt u +\alpha [0]\leqslant \ttt v+\alpha [0],$$
which proves that $v$ is itself critically dominated.
\end{proof}

\begin{lm}
 Let $u$ be any critical sub-solution and $x\in \A_u$, then $u$ is continuous at $x$.
\end{lm}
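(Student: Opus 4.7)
The plan is to exhibit two continuous functions that sandwich $u$ and agree with it at $x$, which will force continuity at $x$.

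First I would recall that because $u$ is critically dominated, the two inequalities $u \leqslant \ttt u + \alpha[0]$ and $u \geqslant \TTT u - \alpha[0]$ hold everywhere on $X$, simply by the equivalent reformulations in the proposition following Theorem \ref{kam}. Moreover, the images $\ttt u$ and $\TTT u$ of a dominated function are continuous by the result \ref{HH} from the appendix that is cited in the text. Therefore the functions $\ttt u + \alpha[0]$ and $\TTT u - \alpha[0]$ are both continuous on $X$, and they bracket $u$ from above and below respectively.

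Next I would use the assumption $x \in \A_u$. By Lemma \ref{IIII} applied with $p=1$, we obtain the equalities
\[
\ttt u(x) + \alpha[0] = u(x) = \TTT u(x) - \alpha[0].
\]
So at the point $x$ the two continuous sandwiching functions coincide and both take the value $u(x)$.

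Finally I would conclude by a trivial squeeze argument: for any sequence $y_n \to x$,
\[
\TTT u(y_n) - \alpha[0] \leqslant u(y_n) \leqslant \ttt u(y_n) + \alpha[0],
\]
and the outer terms both tend to $u(x)$ by continuity of $\ttt u$ and $\TTT u$, forcing $u(y_n) \to u(x)$. No step here looks like a genuine obstacle; the only thing worth double-checking is that the appendix indeed asserts continuity of both Lax-Oleinik semi-group images of a dominated function (the text quotes this for $\ttt$, and the symmetric statement for $\TTT$ is contained in the same reference \ref{HH}).
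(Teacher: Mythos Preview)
Your proof is correct and follows essentially the same approach as the paper: sandwich $u$ between the continuous functions $\TTT u-\alpha[0]$ and $\ttt u+\alpha[0]$, observe equality at $x$ (the paper leaves this implicit rather than citing Lemma~\ref{IIII}), and conclude by squeezing. The only cosmetic difference is that you spell out the sequential argument, whereas the paper simply invokes continuity of the two bracketing functions.
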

\begin{proof}
 The following inequalities are true
$$\TTT u-\alpha[0]\leqslant u\leqslant \ttt u+\alpha[0]$$
and are equalities at $x$. Therefore, the conclusion is a direct consequence of the fact that both $\ttt u+\alpha[0]$ and $\TTT u-\alpha[0]$ are continuous (cf. \ref{HH}).
\end{proof}

The reason why we are interested in this potential is that it
generates the greatest possible sub-solutions.
\begin{pr}\label{previous}
The potential verifies the following properties: 
 \begin{enumerate}
 \item for all $x\in X$, the function $\varphi_x=\varphi(x,.)$ is a critical sub-solution.
\item Let $x\in X$, then for any $y\neq x$ we have
$$\varphi_x(y)=\ttt \varphi_x(y)+\alpha [0],$$ 
therefore, the function $\varphi_x$ is lower
  semi-continuous, and continuous on $X\setminus \{x\}$.
\item A point $x\in X$ is in the projected Aubry set if and only if the function $\varphi_x$ is a weak KAM solution.
\item If the point $x\in X$ is not isolated, the function $\varphi_x$ is continuous at $x$ if and only if $x\in \A$. 
 \end{enumerate}
\end{pr}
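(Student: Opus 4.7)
The plan is to handle the four assertions in order, leaning on the characterization of $\varphi_x$ as the pointwise supremum of $u(\cdot) - u(x)$ over critical sub-solutions $u$.

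For (1), I argue by an approximate-supremum trick: fix $y,z\in X$ and $\varepsilon>0$, pick a critical sub-solution $u_\varepsilon$ with $u_\varepsilon(y)-u_\varepsilon(x)>\varphi_x(y)-\varepsilon$, and combine $u_\varepsilon(z)-u_\varepsilon(x)\leq \varphi_x(z)$ with the domination of $u_\varepsilon$ to obtain
$$\varphi_x(y)-\varphi_x(z) < u_\varepsilon(y)-u_\varepsilon(z)+\varepsilon \leq c(z,y)+\alpha[0]+\varepsilon;$$
letting $\varepsilon\to 0$ gives $\varphi_x\prec c+\alpha[0]$. For (2), point (1) already yields $\varphi_x\leq \ttt\varphi_x+\alpha[0]$. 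If this were strict at some $y_0\neq x$ with gap $t>0$, I would define $v$ equal to $\varphi_x$ off $y_0$ and $v(y_0)=\varphi_x(y_0)+t/2$; then $\varphi_x\leq v\leq \ttt\varphi_x+\alpha[0]$, so Lemma~\ref{entre} makes $v$ a critical sub-solution. Since $y_0\neq x$ gives $v(x)=0$, we would have $v(y_0)-v(x)>\varphi_x(y_0)$, contradicting the defining supremum. Continuity on $X\setminus\{x\}$ then follows from continuity of $\ttt\varphi_x$ (Appendix~\ref{HH}), and lower semi-continuity at $x$ from $\varphi_x(x)=0\leq \ttt\varphi_x(x)+\alpha[0]=\lim_{y\to x,\,y\neq x}\varphi_x(y)$.

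For (3), the forward direction is clean: $x\in \A$ forces $x\in \A_{\varphi_x}$ because $\varphi_x$ is itself a critical sub-solution, so a bi-infinite chain $(x_n)$ through $x$ is calibrated for $\varphi_x$, and the one-step equality $\varphi_x(x_0)-\varphi_x(x_{-1})=c(x_{-1},x_0)+\alpha[0]$ together with Lemma~\ref{trivial} yields $\varphi_x(x)=\ttt\varphi_x(x)+\alpha[0]$; combined with (2) this promotes $\varphi_x$ to a negative weak KAM solution. The converse is deduced from (4): a weak KAM solution is automatically continuous, so $\varphi_x$ is continuous at $x$, and (4) concludes $x\in \A$ when $x$ is non-isolated; the isolated case is handled separately, by iteratively attaining the infima produced by the weak KAM identity and verifying that the resulting chain is calibrated for every critical sub-solution.

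Part (4)'s direction $(\Leftarrow)$ reduces to (3). For $(\Rightarrow)$, pick $y_n\to x$ with $y_n\neq x$ (possible because $x$ is non-isolated), apply~(2) to write $\varphi_x(y_n)=\ttt\varphi_x(y_n)+\alpha[0]$, and pass to the limit using continuity of $\ttt\varphi_x$ on all of $X$ and of $\varphi_x$ at $x$; this yields $\ttt\varphi_x(x)=-\alpha[0]$, so $\varphi_x$ is a weak KAM solution everywhere. The \emph{main obstacle} is the remaining step: deducing $x\in\A$ from $\varphi_x$ being a weak KAM solution. The backward half of the desired bi-infinite chain is built iteratively from the identity $\varphi_x=\ttt\varphi_x+\alpha[0]$, because superlinearity together with compactness of closed balls makes each infimum attained. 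The forward half is the delicate point: my plan is to exploit non-isolatedness by looking at the backward calibrated chains $(y_n^{-k})_k$ available at each $y_n\to x$ and to extract, via a diagonal/compactness argument in the spirit of the closing part of the proof of Theorem~\ref{SStrict}, a chain through $x$ whose forward half is calibrated for $\varphi_x$. Once such a bi-infinite chain for $\varphi_x$ is in hand, the inequality $u(\cdot)-u(x)\leq \varphi_x$ pins down the differences along the chain, making it automatically calibrated for every other critical sub-solution $u$ (in the spirit of Lemma~\ref{IIII}), which yields $x\in \A$.
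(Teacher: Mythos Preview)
Your arguments for (1) and (2) are fine and essentially match the paper's. The forward direction of (3) and the direction $(\Leftarrow)$ of (4) are also correct.

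The gap is in your direction $(\Rightarrow)$ of (4), on which you make the converse of (3) depend. You correctly observe that continuity of $\varphi_x$ at a non-isolated $x$ forces $\varphi_x(x)=\ttt\varphi_x(x)+\alpha[0]$, hence $\varphi_x$ is a negative weak KAM solution. But the remaining step ``$\varphi_x$ weak KAM $\Rightarrow x\in\A$'' is not established. The weak KAM identity only produces \emph{backward} calibrated chains from any point; it gives no forward step from $x$. Your plan to extract a forward half from the backward chains $(y_n^{-k})_k$ at nearby points $y_n\to x$ does not work: those chains go into the past of $y_n$, and in any limit they yield another backward step from $x$, not a forward one. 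Nothing in your argument shows $\TTT\varphi_x(x)-\alpha[0]=\varphi_x(x)$, which is what a forward calibrated step requires; and indeed this equality is a nontrivial fact proved only later in the paper (Corollary~\ref{0}) using the second potential $\varphi_1$. The same issue afflicts your separate treatment of the isolated case of (3): iterating the attained infima again only produces a backward half.

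The paper avoids this obstruction entirely by arguing both converses \emph{by contrapositive via Theorem~\ref{SStrict}}. If $x\notin\A$, there is a continuous critical sub-solution $u$ strict at $x$ with $u(x)=0$, so $\ttt u(x)+\alpha[0]>0$; since $u\leqslant\varphi_x$, monotonicity gives $\ttt\varphi_x(x)+\alpha[0]\geqslant\ttt u(x)+\alpha[0]>0=\varphi_x(x)$, so $\varphi_x$ is not a weak KAM solution (this is (3)). For (4), one uses the same $u$ to build, via Lemma~\ref{entre}, a sub-solution $v=u+\varepsilon\chi_{V\setminus\{x\}}$ on a small neighborhood $V$, forcing $\varphi_x(y)\geqslant\varepsilon/2$ for $y\in V\setminus\{x\}$ while $\varphi_x(x)=0$, so $\varphi_x$ is discontinuous at $x$. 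This route needs no forward calibrated chain and no circular appeal between (3) and (4).
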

In particular, this ends the proof of theorem \ref{Mane}.
\begin{proof}
 The first part is a direct consequence of part 4 and part 3 of the
 previous proposition (\ref{pprevious}).

 Let us consider the function $\psi_x$ defined
 as follows:
\begin{itemize} 
\item $\psi_x(x)=\varphi_x(x)=0$,
 \item $\psi_x(y)=\ttt \varphi_x(y)+\alpha
 [0]$ if $y\neq x$.
\end{itemize}
 The function $\psi_x$ is lower
 semi-continuous. As a matter of fact, it is continuous outside of $x$
 and at $x$ it verifies
$$\liminf_{y\rightarrow x}\psi_x(y)=\liminf_{y\rightarrow x}\ttt \varphi_x(y)+\alpha [0]\geqslant \liminf_{y\rightarrow x}\varphi_x(y)\geqslant 0=\psi_x(x),$$
where the last inequality follows from the existence of a continuous critical sub-solution $u$ which implies
$$\liminf_{y\rightarrow x}\varphi_x(y)\geqslant \liminf_{y\rightarrow
  x}u(y)-u(x)=0.$$ \\
Note that $\varphi_x\leqslant \psi_x\leqslant \ttt\varphi_x+\alpha[0]$ therefore using the ``in between'' lemma (\ref{entre}), we
obtain at once that the function $\psi_x$ is critically dominated and
greater or equal to $\varphi_x$ by definition. Since, by definition of $\varphi$ we also have
$$\forall y\in X, \varphi_x(y)\geqslant \psi_x(y)-\psi_x(x)=\psi_x(y),$$
we obtain in fact that $\varphi_x=\psi_x$. In particular, $\varphi_x=\ttt \varphi_x+\alpha[0]$ on $X\setminus \{x\}$.  This finishes the proof of point (2).\\

To prove 3, note that if $x\in \A$, then for any sub-solution $u$, the following equality holds by \ref{IIII}
$$\ttt u(x)+\alpha[0]=u(x).$$
In particular, $\varphi_x(x)=\ttt \varphi_x(x)+\alpha[0]$, and by the previous point, those functions also coincide on $X\setminus \{x\}$.
\\
To prove the converse, assume $x\notin \A$ and pick a sub-solution $u$ which is strict at $x$ (such a function exists by \ref{SStrict}). Without loss of generality, we can assume that $u(x)=0$.
In particular, the following holds
$$\ttt u(x)+\alpha[0]>u(x)=0.$$
We already
 know that 
$$\forall y\in X, u(y)\leqslant \varphi_x(y).$$
 By the
 monotonicity of the Lax-Oleinik semi-group, we obtain that
$$\forall y\in X, \ttt u(y)+\alpha[0]\leqslant \ttt
\varphi_x(y)+\alpha [0].$$
 Taking $y=x$, we obtain that
$$\varphi_x(x)=0<\ttt u(x)+\alpha [0] \leqslant \ttt
\varphi_x(y)+\alpha [0].$$

Finally, let us assume $x\in X$ is not isolated. We prove that $\varphi_x$ is continuous at $x$ if and
only if $x\in\A$. Assume first that $x\notin \A$. Pick $u\<c+\alpha
[0]$ such that $u$ is strict at $x$ and that $u$ is continuous and
vanishes at $x$. We can find an open neighborhood $V$ of $x$ and an
$\e >0$ such that on $V$, $u+\e \leqslant \ttt u+\alpha [0]$ and $|u|\leqslant \frac{\e}{2}$. Now the
function $v=u+\e \chi_{V\setminus\{x\}}$  verifies $v(x)=0$. Again it is dominated by \ref{entre}. Therefore we have that if $y\in
V\setminus\{x\}$ (which is not empty because $x$ is not isolated), 
$$\varphi_x(y)\geqslant v(y)=u(y)+\e\geqslant \frac{\e}{2},$$
which proves that $\varphi_x$ is not continuous at $x$. The other
implication is clear since we know that any sub-solution is continuous
at $x$ as soon as $x\in \A$.
\end{proof}
Part 2 of proposition \ref{previous} shows that when $x\notin \A$, the
function $\varphi_x$ has a lower jump at $x$. Here is a property of this
``jump''. It is a direct consequence of the previous proposition:

\begin{lm}
For any $x\in X$, the quantity $F(x)=\sup_{u\<c+\alpha [0]} \ttt u(x)+\alpha [0] -u(x)$, where this supremum is taken on the set of all sub-solutions, exists and is equal to $\ttt \varphi_x(x)+\alpha
[0]$. \\ Moreover, for any non isolated point $x$, the function $F$ verifies
$$F(x)=\lim_{\substack{y\rightarrow
  x\\y\neq x}}\varphi_x(y).$$
\end{lm}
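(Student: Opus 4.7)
The statement has two parts, and in each case I would reduce to a result already proved in Proposition \ref{previous}, specifically items (1) and (2). The broad strategy is: the supremum defining $F(x)$ is realised by $\varphi_x$ itself (because $\varphi_x$ is \emph{itself} a critical sub-solution), and then the second equality is just the functional equation of $\varphi_x$ away from $x$ combined with the continuity of the Lax-Oleinik image.

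For the identity $F(x)=\ttt\varphi_x(x)+\alpha[0]$ I would prove two inequalities. To bound $F(x)$ from above, take any sub-solution $u\<c+\alpha[0]$ and consider $\tilde u=u-u(x)$, which is still a critical sub-solution (domination is invariant under additive constants) and satisfies $\tilde u(x)=0$. By the very definition of $\varphi_x$ we have $\tilde u(y)\leqslant \varphi_x(y)$ for every $y\in X$. Since $\ttt$ is monotone (being an infimum) and commutes with addition of a constant, this gives $\ttt u(x)-u(x)=\ttt\tilde u(x)\leqslant \ttt\varphi_x(x)$, i.e. $\ttt u(x)+\alpha[0]-u(x)\leqslant \ttt\varphi_x(x)+\alpha[0]$. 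The bound being uniform in $u$, the supremum $F(x)$ exists and is at most $\ttt\varphi_x(x)+\alpha[0]$. For the reverse inequality (and attainment) I would plug in $u=\varphi_x$, which is a critical sub-solution by Proposition \ref{previous}(1) and satisfies $\varphi_x(x)=0$, so the quantity $\ttt\varphi_x(x)+\alpha[0]-\varphi_x(x)=\ttt\varphi_x(x)+\alpha[0]$ appears on the right-hand side of the supremum.

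For the second part I would invoke Proposition \ref{previous}(2): for every $y\neq x$ one has $\varphi_x(y)=\ttt\varphi_x(y)+\alpha[0]$. The function $\ttt\varphi_x$ is continuous on all of $X$ by the appendix fact \ref{HH} applied to the dominated function $\varphi_x$. Hence, assuming $x$ is non-isolated so that $y$ can approach $x$ while staying in $X\setminus\{x\}$, passing to the limit gives
$$\lim_{\substack{y\to x\\ y\neq x}}\varphi_x(y)=\lim_{\substack{y\to x\\ y\neq x}}\bigl(\ttt\varphi_x(y)+\alpha[0]\bigr)=\ttt\varphi_x(x)+\alpha[0]=F(x),$$
which is the claimed formula.

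I do not expect a genuine obstacle here. The only conceptual point worth highlighting is that the argument works precisely because $\varphi_x$ is \emph{itself} a sub-solution even though defined as a supremum of sub-solutions: this is what allows us both to attain the supremum in $F(x)$ by evaluating at $\varphi_x$, and to translate the upper semi-continuity of $\varphi_x$ at $x$ (coming from the functional equation on $X\setminus\{x\}$ and the continuity of $\ttt\varphi_x$) into an exact limit.
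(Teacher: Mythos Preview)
Your proof is correct and follows essentially the same approach as the paper: bound $F(x)$ above via $u(\cdot)-u(x)\leqslant\varphi_x(\cdot)$ and monotonicity of $\ttt$, attain it by plugging in $u=\varphi_x$, then deduce the limit from $\varphi_x=\ttt\varphi_x+\alpha[0]$ on $X\setminus\{x\}$ and the continuity of $\ttt\varphi_x$. The only cosmetic difference is that you name the shifted function $\tilde u=u-u(x)$ explicitly, whereas the paper applies monotonicity directly to the inequality $u(y)-u(x)\leqslant\varphi_x(y)$.
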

\begin{proof}
 For the first equality,  let $u$
 be any critically dominated function and let $x\in X$. We already
 know that 
$$\forall y\in X, u(y)-u(x)\leqslant \varphi_x(y).$$
 By the
 monotonicity of the Lax-Oleinik semi-group, we obtain that
$$\forall y\in X, \ttt u(y) -u(x)+\alpha[0]\leqslant \ttt
\varphi_x(y)+\alpha [0].$$
 Taking $y=x$, we obtain that
$$\ttt u(x)+\alpha [0] -u(x)\leqslant \ttt \varphi_x(x)+\alpha[0].$$
Therefore, the supremum in the definition of $F(x)$ is reached by the sub-solution $\varphi_x$:
$$F(x)=\ttt \varphi_x(x)+\alpha
[0]-\varphi_x(x),$$
since $\varphi_x(x)=0$.\\
 Now,
The continuity of the function $\ttt\varphi_x+\alpha[0]$ at $x$ together with the equality $\varphi_x=\ttt\varphi_x+\alpha[0]$ on $X\setminus \{x\}$ imply the second equality. 
\end{proof}

Let us now ``reverse time'' and look what happens when we consider the
reversed Lax-Oleinik semi-group:
$$\TTT u(x)=\sup_{y\in X} u(y)-c(x,y).$$
This semi group may also be interpreted as a negative Lax-Oleinik semi-group for the symmetric cost $\overline{c}(x,y)=c(y,x)$ by the following relation:
$$\TTT u=-T_{\overline{c}}^-(-u).$$

Let us stress the fact that the critical value is unchanged when considering the positive semi-group $\TTT$. As a matter of fact, the critical value is the smallest $\alpha$ such that there exists $u\<c+\alpha$. But $u\<c+\alpha$ if and only if $-u\<\overline{c}+\alpha$. Hence the critical values are the same.\\
Therefore, the same properties, with the same proofs, hold. Let us simply state the results.

\begin{lm}\label{entre+}
 Let $u\<c+\alpha [0]$ and let $v$ be a function that verifies the
 following inequalities:
$$u\geqslant v\geqslant \TTT u-\alpha [0],$$
then $v$ itself is a sub-solution: $v\<c+\alpha [0]$.
\end{lm}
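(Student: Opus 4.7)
The plan is to carry out the exact dual of the argument for Lemma \ref{entre}, swapping $\ttt$ for $\TTT$ and reversing all the inequalities. The one preliminary fact I would note is the monotonicity of the positive Lax-Oleinik semi-group: if $w_1 \geqslant w_2$ pointwise, then, since $\TTT w(x) = \sup_{y \in X} w(y) - c(x,y)$, each term in the supremum defining $\TTT w_1(x)$ dominates the corresponding term for $\TTT w_2(x)$, whence $\TTT w_1 \geqslant \TTT w_2$.

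Applying this to the hypothesis $u \geqslant v$ gives $\TTT u \geqslant \TTT v$. Combining with the assumed sandwich $u \geqslant v \geqslant \TTT u - \alpha[0]$ yields the chain
$$v \geqslant \TTT u - \alpha[0] \geqslant \TTT v - \alpha[0].$$
This is precisely characterization (iii) of critical sub-solutions from the proposition following Theorem \ref{kam}, so $v \prec c + \alpha[0]$, which is the claim.

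I do not anticipate any real obstacle: the statement is the mirror image of Lemma \ref{entre}, and the only two ingredients used — the elementary monotonicity of $\TTT$, read off directly from its definition, and the equivalence between the pointwise sub-solution inequality and the operator inequality $u \geqslant \TTT u - \alpha[0]$ — have both already been recorded in the paper.
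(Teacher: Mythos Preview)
Your argument is correct and is exactly the dual of the proof of Lemma~\ref{entre} that the paper intends: the paper itself gives no separate proof here, merely remarking that ``the same properties, with the same proofs, hold'' for $\TTT$, and your chain $u\geqslant v\geqslant \TTT u-\alpha[0]\geqslant \TTT v-\alpha[0]$ is precisely that mirrored argument.
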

\begin{pr} 
The function $\varphi$ verifies the following properties:
 \begin{enumerate}
 \item for all $x\in X$, the function $\varphi^x=-\varphi(.,x)$ is a
   critical sub-solution.
\item Let $x\in X$, then for any $y\neq x$ the function $\varphi^x$ verifies
$$\varphi^x(y)=\TTT \varphi^x(y)-\alpha [0]$$ therefore, it is upper
  semi-continuous, and continuous on $X\setminus \{x\}$.
\item A point $x\in X$ is in the projected Aubry set if and only if the function $\varphi^x$ is a positive weak KAM solution.
\item If $x$ is not isolated, the function $\varphi^x$ is continuous at $x$ if and only if $x\in \A$.
 \end{enumerate}
\end{pr}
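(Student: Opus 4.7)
The plan is to reduce everything to Proposition \ref{previous} applied to the reversed cost $\overline{c}(x,y):=c(y,x)$. The excerpt already supplies the three pillars of the corresponding dictionary: the involution $u\leftrightarrow -u$ sends $\alpha$-sub-solutions of $c$ to $\alpha$-sub-solutions of $\overline{c}$, the critical value $\alpha[0]$ is therefore the same for both costs, and the Lax-Oleinik operators satisfy $\TTT u=-T_{\overline{c}}^-(-u)$. From the definition of $\varphi$ and the first pillar,
\[
\varphi(x,y)=\sup_{u\<c+\alpha[0]}\bigl(u(y)-u(x)\bigr)=\sup_{v\<\overline{c}+\alpha[0]}\bigl(v(x)-v(y)\bigr)=\varphi_{\overline{c}}(y,x),
\]
so $\varphi^x(y)=-\varphi(y,x)=-(\varphi_{\overline{c}})_x(y)$. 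Moreover, reversing the order of a $(u,c,\alpha[0])$-calibrated bi-infinite chain produces a $(-u,\overline{c},\alpha[0])$-calibrated one (use $c(x_i,x_{i+1})=\overline{c}(x_{i+1},x_i)$); hence the projected Aubry set for $\overline{c}$ coincides with $\A$.

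With this dictionary in hand, I transfer Proposition \ref{previous} point by point. For (1), $(\varphi_{\overline{c}})_x$ is a critical sub-solution for $\overline{c}$, so its negative $\varphi^x$ is a critical sub-solution for $c$. For (2), item (2) of Proposition \ref{previous} applied to $\overline{c}$ gives $(\varphi_{\overline{c}})_x=T_{\overline{c}}^-(\varphi_{\overline{c}})_x+\alpha[0]$ on $X\setminus\{x\}$; negating this equality and using $\TTT u=-T_{\overline{c}}^-(-u)$ yields $\varphi^x=\TTT\varphi^x-\alpha[0]$ on $X\setminus\{x\}$. Since $\TTT\varphi^x$ is continuous on all of $X$ (cf.\ \ref{HH}), this already gives continuity of $\varphi^x$ away from $x$. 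Upper semi-continuity at $x$ then follows from the sub-solution inequality $\varphi^x\geqslant \TTT\varphi^x-\alpha[0]$, valid everywhere by (1): combining it with the equality just obtained off $x$ gives
\[
\limsup_{\substack{y\to x\\ y\neq x}}\varphi^x(y)=\TTT\varphi^x(x)-\alpha[0]\leqslant \varphi^x(x)=0.
\]

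For (3), the involution $v\mapsto -v$ exchanges fixed points of $T_{\overline{c}}^-+\alpha[0]$ with fixed points of $\TTT-\alpha[0]$; thus ``$(\varphi_{\overline{c}})_x$ is a negative weak KAM solution for $\overline{c}$'' becomes ``$\varphi^x$ is a positive weak KAM solution for $c$'', and the equivalence with $x\in\A$ is item (3) of Proposition \ref{previous} applied to $\overline{c}$, combined with the identification of projected Aubry sets. For (4), continuity at a point is preserved under negation, so item (4) of Proposition \ref{previous} transfers verbatim. The step I expect to demand the most care is the identification of projected Aubry sets sketched in the first paragraph: one must check that the chain-reversal trick is compatible with the definition of the (non-projected) Aubry set, and that this survives the intersection over all sub-solutions entering the definition of $\AA$. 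Once that verification is in place, everything else reduces to pure bookkeeping under the involution $u\mapsto -u$.
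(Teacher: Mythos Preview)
Your proposal is correct and follows exactly the route the paper intends: right before this proposition the paper introduces the reversed cost $\overline{c}$, records the identity $\TTT u=-T_{\overline{c}}^-(-u)$ and the equality of critical values, and then simply asserts that ``the same properties, with the same proofs, hold'' without spelling anything out. Your dictionary $\varphi^x=-(\varphi_{\overline c})_x$ together with the chain-reversal argument for the Aubry set is precisely the content of that sentence made explicit, so there is nothing to compare --- you have written out what the paper left implicit.
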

\begin{lm}
For any $x\in X$, the quantity $f(x)=\inf_{u\<c+\alpha [0]} \TTT u(x)-\alpha [0]-u(x)$ exists and is equal to $\TTT \varphi^x(x)-\alpha
[0]$. \\ Moreover, whenever $x$ is not isolated, the function $f$ verifies
$$\forall x\in X,f(x)=\lim_{\substack{y\rightarrow
  x\\y\neq x}}\varphi^x(y). $$
\end{lm}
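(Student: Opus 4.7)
The plan is to mirror the proof of the analogous statement for $F$ and $\varphi_x$ that was given just above, using the dual proposition governing $\varphi^x$ and $\TTT$. The key identities from the dual proposition that I will invoke are that $\varphi^x$ is itself a critical sub-solution with $\varphi^x(x)=-\varphi(x,x)=0$, and that $\varphi^x(y)=\TTT\varphi^x(y)-\alpha[0]$ for every $y\neq x$.

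For the first equality, I would unwind the definition of $\varphi^x$. Since $\varphi(y,x)=\sup_{u\<c+\alpha[0]}(u(x)-u(y))$, every critical sub-solution $u$ satisfies $u(y)-u(x)\geqslant -\varphi(y,x)=\varphi^x(y)$ for every $y\in X$. Substituting into the definition of $\TTT$ gives
$$\TTT u(x)-u(x)=\sup_{y\in X}\bigl(u(y)-u(x)-c(x,y)\bigr)\geqslant \sup_{y\in X}\bigl(\varphi^x(y)-c(x,y)\bigr)=\TTT\varphi^x(x),$$
so $\TTT u(x)-\alpha[0]-u(x)\geqslant \TTT\varphi^x(x)-\alpha[0]$ for every critical sub-solution $u$. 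Taking the infimum, $f(x)\geqslant \TTT\varphi^x(x)-\alpha[0]$. The reverse inequality is immediate because $\varphi^x$ is itself admissible in the infimum and $\varphi^x(x)=0$, which shows the infimum is actually attained by $\varphi^x$ and equals $\TTT\varphi^x(x)-\alpha[0]$.

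For the second equality, under the assumption that $x$ is non isolated I would use the fact that $\varphi^x=\TTT\varphi^x-\alpha[0]$ on $X\setminus\{x\}$ together with continuity of $\TTT\varphi^x$ on all of $X$ (since by the results recalled in the appendix, the image of any dominated function under the Lax--Oleinik semi-group is continuous). Taking the limit along $y\to x$ with $y\neq x$,
$$\lim_{\substack{y\to x\\y\neq x}}\varphi^x(y)=\lim_{\substack{y\to x\\y\neq x}}\bigl(\TTT\varphi^x(y)-\alpha[0]\bigr)=\TTT\varphi^x(x)-\alpha[0]=f(x),$$
which is the desired identity.

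There is no substantial obstacle here: the statement is a completely symmetric transcription of the previous lemma, and the only thing to double-check is the sign convention linking $\varphi^x$ with the infimum (rather than supremum) in the definition of $f$, which accounts for the reversal $\sup\mapsto\inf$ and $\TTT\mapsto\ttt$. Everything else is a mechanical application of the dual proposition.
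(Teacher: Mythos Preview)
Your proposal is correct and follows exactly the approach the paper intends: the paper does not write out a separate proof for this lemma but simply declares that ``the same properties, with the same proofs, hold'' by duality with the $F$--$\varphi_x$ case, and your argument is precisely that dual transcription. The only cosmetic difference is that you substitute $u(y)-u(x)\geqslant\varphi^x(y)$ directly into the supremum defining $\TTT$, whereas the paper's $F$-proof phrases the same step as monotonicity of the Lax--Oleinik semi-group applied to $u(\cdot)-u(x)\geqslant\varphi^x(\cdot)$; these are identical in content.
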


 Until now, we mostly considered
general sub-solutions. However, it is much easier to deal with
semi-continuous or even continuous functions. We have already noticed
that the functions $\varphi_x$ are lower semi-continuous and therefore
that in the definition of $\varphi$ we can restrict the supremum to
lower semi-continuous functions. The following theorem strengthens the
result.

\begin{Th}
 Let $x\in X$.  The function $\varphi_x$ is a simple limit of
 continuous critical sub-solutions. Moreover, the limit may be chosen
 to be uniform outside of any given neighborhood of $x$.
\end{Th}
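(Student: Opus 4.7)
The plan is to split on whether $x\in\A$ or not. If $x\in\A$, then by Proposition \ref{previous}(4) the function $\varphi_x$ is itself continuous, so the constant sequence $u_n=\varphi_x$ trivially does the job. I therefore focus on the case $x\notin\A$, where $\varphi_x$ is lower semi-continuous with a genuine upward jump of size $F(x)>0$ at $x$, while remaining continuous on $X\setminus\{x\}$.

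The overall strategy is to build continuous critical sub-solutions $u_n$ that coincide with $\varphi_x$ outside a shrinking ball $B(x,1/n)$ and whose values at $x$ tend to $0$. With such a sequence, uniform convergence on the complement of any fixed neighborhood $V$ of $x$ is automatic (for $n$ so large that $B(x,1/n)\subset V$ we have $u_n=\varphi_x$ on $X\setminus V$), and pointwise convergence on all of $X$ follows, using $u_n(x)\to 0=\varphi_x(x)$. Note that since any critical sub-solution vanishing at $x$ lies below $\varphi_x$ and $\varphi_x$ jumps at $x$, no continuous sub-solution can exactly equal $\varphi_x$ off $\{x\}$; equality in the limit is the best one can hope for.

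To produce $u_n$, I would start from the continuous function $\widehat{\varphi}_x:=\ttt\varphi_x+\alpha[0]$, which by Proposition \ref{previous}(2) is a continuous critical sub-solution agreeing with $\varphi_x$ on $X\setminus\{x\}$ and taking value $F(x)$ at $x$. Fix in addition an auxiliary continuous critical sub-solution $h$ with $h(x)=0$, obtained by shifting a weak KAM solution from Theorem \ref{kam} (so $h\leq\varphi_x$ everywhere). My proposed approximants are
$$
u_n(y)=\max\!\bigl(h(y),\ \widehat{\varphi}_x(y)-F(x)\,\rho_n(y)\bigr),
$$
where $\rho_n:X\to[0,1]$ is a continuous cut-off with $\rho_n(x)=1$ supported in $B(x,1/n)$. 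Outside $B(x,1/n)$ the bump vanishes and $h\leq\widehat{\varphi}_x=\varphi_x$, so $u_n=\varphi_x$ there; at $x$ the maximum is $\max(0,F(x)-F(x))=0$; continuity of $u_n$ is clear.

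The main obstacle is verifying that $u_n$ is in fact a critical sub-solution: the modified function $\widehat{\varphi}_x-F(x)\rho_n$ is not automatically dominated, because subtracting a non-constant continuous bump may destroy the inequality $u(y)-u(z)\leq c(z,y)+\alpha[0]$. I would handle this via the \emph{in-between} Lemma \ref{entre}: by selecting $\rho_n$ so as to decrease slowly enough near $x$, one arranges that on $B(x,1/n)$ the expression $\widehat{\varphi}_x-F(x)\rho_n$ lies between the sub-solution $h$ and its Lax--Oleinik iterate $\ttt h+\alpha[0]$, at which point Lemma \ref{entre} delivers the domination and hence that $u_n\prec c+\alpha[0]$. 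The quantitative room for this interpolation on a sufficiently small ball is supplied by the uniform super-linearity \ref{unif} together with the continuity of $c$ at $(x,x)$; checking this carefully is the delicate step of the argument, and once established the convergence statements of the theorem follow from the discussion of the second paragraph.
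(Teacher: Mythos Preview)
Your plan has a genuine gap in the key step. Lemma \ref{entre} is a \emph{global} statement: to conclude that $v$ is dominated you need $h\leqslant v\leqslant \ttt h+\alpha[0]$ on all of $X$, not just on $B(x,1/n)$. But outside the ball your function $v=\widehat\varphi_x-F(x)\rho_n$ equals $\varphi_x$, and there is no reason whatsoever for $\varphi_x\leqslant \ttt h+\alpha[0]$ to hold globally: $\ttt h+\alpha[0]$ is just another continuous sub-solution, and any sub-solution vanishing at $x$ lies \emph{below} $\varphi_x$, not above it. Even locally on the ball the upper sandwich is problematic: near the boundary where $\rho_n\approx 0$ you need $\widehat\varphi_x(y)\approx F(x)\leqslant \ttt h(y)+\alpha[0]\approx \ttt h(x)+\alpha[0]$, but by definition $F(x)=\sup_{u}\bigl(\ttt u(x)+\alpha[0]-u(x)\bigr)$, so $\ttt h(x)+\alpha[0]\leqslant F(x)$ with strict inequality unless $h$ happens to realise the supremum. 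Taking the maximum with $h$ does not rescue this: the maximum of a sub-solution and a non-sub-solution need not be a sub-solution.

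More fundamentally, insisting on \emph{exact} equality $u_n=\varphi_x$ outside $B(x,1/n)$ together with $u_n(x)=0$ forces $u_n$ to have oscillation close to $F(x)$ on an arbitrarily small ball, which sits right at the edge of what the domination inequality $u_n(y)-u_n(z)\leqslant c(z,y)+\alpha[0]\approx c(x,x)+\alpha[0]$ permits (recall $F(x)\leqslant c(x,x)+\alpha[0]$); there is no slack to absorb the errors coming from continuity of $c$. The paper's proof sidesteps this by giving up exact equality: it builds $\varphi_\e=\theta\cdot(\ttt\varphi_x-\e)$ with $\theta$ a Urysohn cut-off, so that $\varphi_\e=\varphi_x-\e$ outside a small neighbourhood $V$. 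The $\e$-shift provides precisely the room needed to verify domination by a direct four-case analysis (both points outside $V$; one in, one out, near or far via super-linearity; both in $V$), and letting $\e\to 0$ still yields uniform convergence outside any fixed neighbourhood.
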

\begin{proof}
If $x\in \A$, the function $\varphi_x$ is a weak KAM solution and is
therefore continuous.  If $x\notin \A$, then $\ttt \varphi_x(x)+\alpha[0]>0$.
Let $\e\in ]0,1[$ be such that $\e<\ttt \varphi_x(x)+\alpha[0]$. We will see in the appendix (\ref{HH} and \ref{apriori})  that any sub-solution has a growth that is at most
        linear (and which can be bounded independently from the sub-solution) while $c$ is super-linear. Therefore, we can find a
        real number $1<R$ such that whenever $y\in B(x,1)$ and
        $\d(x,z)>R$ then for any critical sub-solution $u$,
\begin{equation}\label{111}
u(y)-u(z)<c(z,y)+\alpha[0]-2(\ttt \varphi_x(x)+\alpha [0])
\end{equation}
 and
\begin{equation}\label{22}
u(z)-u(y)<c(y,z)+\alpha[0]-2(\ttt \varphi_x(x)+\alpha [0]).
\end{equation}
 Using the continuity of $c$ and the compactness of the ball $B(x,R)$, we can find a neighborhood $V\subset B(x,1)$  of $x$
        verifying: 
\begin{itemize}
\item if $y,z,t,u\in V$ then
        $|c(y,z)-c(t,u)|<\frac{\e}{2}$,
\item if $z\in B(x,R)$ and
        $y,t\in V$ then $$|c(z,y)-c(z,t)|<\e$$ and
        $$|c(y,z)-c(t,z)|<\e,$$
\end{itemize}
 Cutting down $V$, by continuity of $\ttt \varphi_x$ we can assume

\begin{itemize}
\item if $y\in V\setminus \{x\}$ then
        $\varphi_x(y)=\ttt \varphi_x(y)+\alpha [0]>\e$,
\item if $y,t\in
        V$ then $|\ttt
        \varphi_x(y)-\ttt \varphi_x(t)|<\frac{\e}{2}$,
\end{itemize}
Note that from the last condition it follows that for $(y,t)\in V\setminus \{x\}$ we have 
$$|\varphi_x(y)- \varphi_x(t)|=|\ttt
        \varphi_x(y)-\ttt \varphi_x(t)|<\frac{\e}{2}.$$
 Let us now consider the function $\fe$ defined as
        follows. 
Let $\theta:X\rightarrow [0,1]$ be a Urysohn function equal to $1$ on $X\setminus V$, which vanishes at $x$ and  define
$$\forall z\in X, \fe(z)=\theta(z)\left(\ttt \varphi_x(z)-\e\right)=\theta(z)\left( \varphi_x(z)-\e\right).$$
The function $\varphi_{\e}$  verifies the following properties:
\begin{itemize} 
 \item on $X\setminus V$,
        $\fe(y)=\varphi_x(y)-\e$,
\item on $V$, $\fe$ is non-negative, vanishes at $x$ and
        verifies
$$\forall y\in V\setminus\{x\}, \fe(y)\leqslant \varphi_x(y)-\e.$$
\end{itemize}
  Now let
        us check that the function $\fe$ is critically dominated. It
        is enough to separately consider several cases.
 If both
        $y,z\notin V$ then
        $$\fe(y)-\fe(z)=\varphi_x(y)-\varphi_x(z)\leqslant
        c(z,y)+\alpha [0].$$
If $y\in V$ and $z\notin V$, 
        we distinguish between cases. First, let us notice that if
        $z\notin B(x,R)$ then, since $\fe$ is non negative on $V$, 
taking into consideration the fact that 
$$\ttt \varphi_x(x)-\varphi_x(y)+\alpha [0]+\frac{\e}{2}\geqslant 0,$$
which is clear for $y=x$, since $\ttt \varphi_x(x)+\alpha[0]\geqslant \varphi_x(x)=0$, and for $y\neq x$, follows from 
$$|\ttt \varphi_x(x)-\ttt \varphi_x(y)|<\frac{\e}{2} \; \textrm{and} \; 
\varphi_x(y)=\ttt \varphi_x(y)+\alpha[0],$$ 
and the fact (using \eqref{22}) that 
$$\varphi_x(z)-\varphi_x(y)\leqslant c(y,z)+\alpha[0]-2(\ttt \varphi_x(x)+\alpha[0]),$$
we obtain that
\begin{eqnarray*}
\fe(z)-\fe(y)&\leqslant&\varphi_x(z)-\e\\
&\leqslant&  \varphi_x(z)-\e -\varphi_x(y)+\ttt  \varphi_x(x)+\alpha [0]+\frac{\e}{2}\\
&\leqslant& c(y,z)+\alpha[0]-2(\ttt \varphi_x(x)+\alpha[0])+\ttt  \varphi_x(x)+\alpha [0]-\frac{\e}{2}\\
&<&c(y,z)+\alpha [0],
\end{eqnarray*}
because $\ttt \varphi_x(x)+\alpha[0]\geqslant \varphi_x(x)=0$.
\\ If $z\in B(x,R)$ then using $\varphi_x(x)=0$ and $\fe(y)\geqslant0$, we obtain
$$\fe(z)-\fe(y)\leqslant \varphi_x(z)-\e-\varphi_x(x)\leqslant c(x,z)+\alpha [0]-\e\leqslant c(y,z)+\alpha [0].$$
In both cases, the following inequalities hold
$$\fe(y)-\fe(z)\leqslant \varphi_x(y)-\e-(\varphi_x(z)-\e)=\varphi_x(y)-\varphi_x(z)\leqslant
c(z,y)+\alpha [0].$$ 
Finally, if $y,z\in V$ then since $\varphi_x(x)=0$ and $\fe(z)\geqslant 0$, 
$$\fe(y)-\fe(z)\leqslant  \varphi_x(y)-\varphi_x(x)-\e\leqslant
c(x,y)+\alpha [0]-\e\leqslant c(z,y)+\alpha [0].$$ 
\end{proof}
We now propose another version of a discrete Ma\~ ne's potential. We will show that it is very much related to $\varphi$. 
We begin with a definition 
\begin{df}\rm
Let us define the family of functions, for all $n\in \mathbb{N}^*,(x,y)\in X^2$,
$$c_n(x,y)=\inf_{(x_1,\ldots ,x_{n-1})\in X^{n-1}}\{c(x,x_1)+c(x_1,x_2)+\cdots +c(x_{n-1},y) \}.$$
\end{df}
\begin{pr}
For any $n>0$, the function $c_n$ is continuous.
\end{pr}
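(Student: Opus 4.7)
The plan is to prove both upper and lower semi-continuity of $c_n$ at an arbitrary point $(x,y)\in X^2$, using the uniform super-linearity and uniform boundedness of $c$ together with the compactness of closed balls.

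First I would establish that the infimum in the definition of $c_n(x,y)$ is actually attained. Any minimizing sequence $(x_1^{(k)},\ldots,x_{n-1}^{(k)})$ gives total costs bounded above by some constant $M$ (take a fixed admissible chain, e.g., constant equal to $x$ and a last jump to $y$, and use uniform boundedness). By uniform super-linearity applied with $k=0$, every individual term $c(x_i^{(k)},x_{i+1}^{(k)})$ is bounded below by $-C(0)$; coupling this with the upper bound $M$ on the sum forces each individual $c(x_i^{(k)},x_{i+1}^{(k)})$ to lie in a bounded interval. Applying uniform super-linearity with large $k$ then yields a uniform bound on each $\d(x_i^{(k)},x_{i+1}^{(k)})$, so the whole chain stays in a fixed closed ball around $x$. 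Compactness of closed balls and continuity of $c$ give a convergent subsequence whose limit achieves the infimum.

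Second, for upper semi-continuity at $(x,y)$: let $(x_1^*,\ldots,x_{n-1}^*)$ be a minimizer. For any $(x',y')$ near $(x,y)$,
$$c_n(x',y')\leqslant c(x',x_1^*)+c(x_1^*,x_2^*)+\cdots+c(x_{n-1}^*,y').$$
Only the first and last terms depend on $(x',y')$, and by continuity of $c$ the right-hand side tends to $c_n(x,y)$ as $(x',y')\to(x,y)$. Hence $\limsup_{(x',y')\to(x,y)}c_n(x',y')\leqslant c_n(x,y)$.

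Third, for lower semi-continuity: take any sequence $(x^{(k)},y^{(k)})\to (x,y)$ and choose a minimizer $(x_1^{(k)},\ldots,x_{n-1}^{(k)})$ for each. By the upper semi-continuity just proved, the sequence $c_n(x^{(k)},y^{(k)})$ is bounded above, so the same argument as in the first step (individual lower bounds from $k=0$, then super-linearity for large $k$) produces a uniform bound on $\d(x,x_i^{(k)})$ for each $i$. By compactness of closed balls and a diagonal extraction, we may assume $x_i^{(k)}\to x_i^\infty$ for every $i$. Passing to the limit in
$$c_n(x^{(k)},y^{(k)})=c(x^{(k)},x_1^{(k)})+c(x_1^{(k)},x_2^{(k)})+\cdots+c(x_{n-1}^{(k)},y^{(k)})$$
using continuity of $c$, we obtain $\liminf c_n(x^{(k)},y^{(k)})\geqslant c(x,x_1^\infty)+\cdots+c(x_{n-1}^\infty,y)\geqslant c_n(x,y)$.

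The main obstacle is step three, controlling the minimizing chains in the absence of global compactness; this is exactly where the uniform super-linearity of $c$ is indispensable, since it is the only mechanism forcing the intermediate points to remain in a bounded region.
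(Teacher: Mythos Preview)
Your proof is correct, and the key mechanism---uniform super-linearity plus an a priori upper bound on $c_n$ forces near-minimizing chains into a fixed compact ball---is exactly the one the paper uses. The packaging differs, however. Rather than splitting into upper and lower semi-continuity and extracting subsequential limits of minimizers, the paper fixes a compact neighborhood $K=\overline{B(x^0,1)\times B(y^0,1)}$, shows (via super-linearity with $k=1$ applied directly to the sum, without the intermediate step through $k=0$) that for all $(x,y)\in K$ the infimum defining $c_n(x,y)$ may be restricted to chains lying in a single compact set $\overline{B(x^0,R)}^{\,n-1}$, and then invokes Heine's theorem to conclude that $c_n|_K$ is an infimum of an equi-continuous family, hence itself continuous. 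Your route has the minor bonus of explicitly producing minimizers; the paper's is a one-shot argument that avoids any sequential extraction.
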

\begin{proof}
Let $n$ be a positive integer and let us consider  a pair of points $(x^0,y^0)\in X^2$.
First, let us notice that for all $(x,y)\in K=\overline{B(x^0,1)\times B(y^0,1)}$, using the uniform boundedness of $c$ (condition \ref{unifb}), the following inequality holds:
\begin{equation}\label{fini}
c_n(x,y)\leqslant (n-1)c(x,x)+c(x,y)\leqslant  nA(\d(x^0,y^0)+2).
\end{equation}
Moreover, using the super-linearity of $c$ (condition \ref{unif}), for any chain of points $(x_1,\ldots ,x_{n-1})\in X^{n-1}$, we have, setting $x_0=x$ and $x_n=y$:
\begin{equation}\label{fini2}
\sum_{i=0}^{n-1} c(x_i,x_{i+1}) \geqslant -nC(1)+\sum_{i=0}^{n-1} \d(x_i,x_{i+1}).
\end{equation} 
Finally, if the chain verifies that $\sum_{i=0}^{n-1} c(x_i,x_{i+1})\leqslant c_n(x,y)+1$, using \eqref{fini} and \eqref{fini2}, we obtain that 
$$\sum_{i=0}^{n-1} \d(x_i,x_{i+1})\leqslant c_n(x,y)+nC(1)+1\leqslant n(A(\d(x^0,y^0)+2)+C(1))+1=R.$$
In particular,
 \begin{eqnarray*}
 \forall i\in [0,n], \d(x_0,x_{i})& \leqslant& \sum_{j=0}^{i-1} \d(x_j,x_{j+1})\\
 & \leqslant &\sum_{j=0}^{n-1} \d(x_j,x_{j+1})\\
 &\leqslant& R.
 \end{eqnarray*}
  We have just proven that restricted to $K$, in the definition of $c_n$, we can take the infimum on chains of points which belong to $B(x,R)^{n-1}$ which is relatively compact. Therefore, by Heine's theorem, the restriction of $c_n$ to $K$ is a finite infimum of equi-continuous functions and is therefore itself continuous.
  
\end{proof}
\begin{rem}\rm
In the case where $X$ is compact, one can show that the family of functions $(c_n)_{n\in \mathbb{N}^*}$  is uniformly equi-continuous, however, in the non compact case, it is not clear whether this fact remains true.
\end{rem}
Let us now introduce another family of functions: 
\begin{df}\rm
For any $n\in \mathbb{N}^*$ and $(x,y)\in X^2$ let 
$$\varphi_n(x,y)=\inf_{k\geqslant n}c_k(x,y)+k\alpha [0].$$
\end{df}
This quantity is always greater or equal to $\varphi(x,y)$ by the triangular inequality. Moreover, the functions $\varphi_n$ are clearly increasing with $n$. 

\begin{pr}\label{semi}
 For any $n\in \mathbb{N}^*$, the function $\varphi_n$ is upper semi-continuous. Moreover, for any $x$, the function $\varphi_{n,x}=\varphi_n(x,.)$ is critically dominated. Finally, $\ttt \varphi_{n,x}+\alpha [0]=\varphi_{n+1,x}$.
\end{pr}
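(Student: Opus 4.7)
The plan is to establish the three claims in the order (1) upper semi-continuity, (3) the semi-group identity, (2) critical domination, since (2) will follow from (3) together with the monotonicity in $n$ observed just before the statement.

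For (1), each $c_k + k\alpha[0]$ is continuous by the preceding proposition, hence in particular upper semi-continuous. Since $\varphi_n = \inf_{k\geq n}(c_k + k\alpha[0])$ is an infimum of upper semi-continuous functions, it is itself upper semi-continuous. This is essentially immediate.

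The crux of the proposition is (3), the identity $\ttt \varphi_{n,x} + \alpha[0] = \varphi_{n+1,x}$. First I would record the ``concatenation'' lemma:
$$c_{k+1}(x,z) = \inf_{y\in X}\bigl[c_k(x,y) + c(y,z)\bigr],$$
which is built into the definition of $c_{k+1}$ (a $(k+1)$-chain from $x$ to $z$ is obtained by appending a last step $(y,z)$ to an optimal $k$-chain from $x$ to $y$, and conversely any such chain arises this way). The inequality $\leq$ is clear by concatenation; the reverse follows because an optimal $(k+1)$-chain $(x,x_1,\dots,x_k,z)$ can be split at $y=x_k$, and its first $k$ steps are automatically an optimal $k$-chain from $x$ to $x_k$ (otherwise we could improve the total). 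Once this is in hand,
$$\ttt\varphi_{n,x}(z) = \inf_{y\in X}\bigl[\varphi_n(x,y) + c(y,z)\bigr] = \inf_{k\geq n}\inf_{y\in X}\bigl[c_k(x,y) + c(y,z)\bigr] + k\alpha[0] = \inf_{k\geq n}\bigl[c_{k+1}(x,z) + k\alpha[0]\bigr],$$
and a change of index gives $\ttt\varphi_{n,x}(z) = \varphi_{n+1,x}(z) - \alpha[0]$, as wanted.

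Finally, for (2), the text already notes that $\varphi_n$ is increasing in $n$; hence $\varphi_{n+1,x}\geq \varphi_{n,x}$. Combined with (3) this gives $\varphi_{n,x} \leq \ttt\varphi_{n,x} + \alpha[0]$, which by criterion (ii) of the proposition following Theorem~\ref{kam} means exactly that $\varphi_{n,x}$ is critically dominated. The only real obstacle I foresee is rigorously justifying the swap of infima and the concatenation identity for $c_{k+1}$; everything else is formal.
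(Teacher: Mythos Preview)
Your proof is correct and follows essentially the same computation as the paper: upper semi-continuity from an infimum of continuous functions, and the identity $\ttt\varphi_{n,x}+\alpha[0]=\varphi_{n+1,x}$ from the concatenation formula $c_{k+1}(x,z)=\inf_y[c_k(x,y)+c(y,z)]$ together with a swap of infima. The only organizational difference is that the paper proves critical domination directly (the same chain inequality $\varphi_{n,x}(y)+c(y,z)+\alpha[0]\geq\varphi_{n+1,x}(z)\geq\varphi_{n,x}(z)$), whereas you derive it more economically from the semi-group identity and monotonicity in $n$ via criterion~(ii); both routes amount to the same estimate.
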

\begin{proof}
The upper semi-continuity comes from the fact that $\varphi_n$ is an infimum of continuous functions.
 The domination of $\varphi_{n,x}$ is consequence of the definitions. In fact, let $y,z$ be
 in $X$, then
\begin{eqnarray*}
\varphi_{n,x}(y)+c(y,z)+\alpha [0]&=&\inf_{k\geqslant n}c_k(x,y)+k\alpha [0]+c(y,z)+\alpha [0]
\\ &\geqslant &\inf_{k\geqslant n+1}c_k(x,z)+k\alpha [0]
\\&=&\varphi_{n+1,x}(z)\\
&\geqslant &\varphi_{n,x}(z).
\end{eqnarray*}

To prove the last point, just write that 
\begin{eqnarray*}
\ttt \varphi_{n,x}(z)+\alpha [0]&=&\inf_y \varphi_{n,x}(y)+c(y,z)+\alpha [0]
\\&=&\inf_{y\in X} \inf_{k\geqslant n}c_k(x,y)+k\alpha [0]+c(y,z)+\alpha [0]
\\&=&\varphi_{n+1,x}(z) .
\end{eqnarray*}

\end{proof}
We now link both versions of the potential:
\begin{pr}\label{identification}
On $X^2\setminus \Delta X$, $\varphi=\varphi_1$.  Moreover, for any $x\in X$, 
$$\varphi_1(x,x)\geqslant \varphi(x,x)=0.$$
\end{pr}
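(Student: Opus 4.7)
The plan is to prove both inequalities. The inequality $\varphi\leqslant \varphi_1$ everywhere has already been observed (it follows from the iterated triangular inequality for $\varphi$, noted right after the definition of $\varphi_n$). Specializing this at $y=x$ immediately gives $\varphi_1(x,x)\geqslant \varphi(x,x)=0$, which is the second assertion of the proposition. In addition, applying the iterated sub-solution inequality to any closed chain $x=x_0,x_1,\ldots,x_k=x$ yields $0\leqslant c_k(x,x)+k\alpha[0]$ for every $k\geqslant 1$, an estimate I shall reuse below.

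For the reverse inequality when $y\neq x$, I would fix $x\in X$ and define $v:X\to\R{}$ by $v(x)=0$ and $v(z)=\varphi_{1,x}(z)$ for $z\neq x$. If I can show that $v$ is a critical sub-solution, then by the very definition of $\varphi$ I obtain $v(y)\leqslant \varphi(x,y)$ for every $y\in X$; since $v(y)=\varphi_{1,x}(y)$ when $y\neq x$, this reads $\varphi_1(x,y)\leqslant \varphi(x,y)$ and closes the argument.

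The verification of $v\<c+\alpha[0]$ splits into cases depending on which of the two arguments equal $x$. When both differ from $x$, the inequality is inherited from Proposition~\ref{semi}, which states that $\varphi_{1,x}$ is itself critically dominated. When the first argument is $x$ and the second $w$ satisfies $w\neq x$, one uses $v(w)=\varphi_{1,x}(w)\leqslant c_1(x,w)+\alpha[0]=c(x,w)+\alpha[0]$, and the case of $x$ on both sides reduces to $0\leqslant c(x,x)+\alpha[0]$, which is precisely the reused estimate at $k=1$.

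The delicate case, and the main obstacle, is the one where the second argument equals $x$ and the first is some $z\neq x$; here we need $\varphi_{1,x}(z)\geqslant -c(z,x)-\alpha[0]$. To prove this I would take any chain realizing (or approximating) $c_k(x,z)$ and extend it by the single step $z\to x$, producing a closed chain from $x$ to $x$ of length $k+1$. This gives $c_k(x,z)+c(z,x)\geqslant c_{k+1}(x,x)$, and combined with the reused estimate $c_{k+1}(x,x)\geqslant -(k+1)\alpha[0]$ one obtains $c_k(x,z)+c(z,x)+(k+1)\alpha[0]\geqslant 0$ for every $k\geqslant 1$. Taking the infimum over $k$ yields $\varphi_{1,x}(z)+c(z,x)+\alpha[0]\geqslant 0$, as required. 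Thus $v\<c+\alpha[0]$, which completes the proof.
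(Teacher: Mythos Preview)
Your proof is correct. Both your argument and the paper's define the same auxiliary function (the paper calls it $\psi$, you call it $v$) obtained by resetting $\varphi_{1,x}$ to $0$ at the basepoint, and then show it is a critical sub-solution; from there the conclusion is immediate.

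The difference lies in \emph{how} the sub-solution property of this function is verified. You do a direct case-by-case check, the only non-obvious case being $v(x)-v(z)\leqslant c(z,x)+\alpha[0]$, which you handle by the combinatorial chain inequality $c_k(x,z)+c(z,x)\geqslant c_{k+1}(x,x)\geqslant -(k+1)\alpha[0]$. The paper instead observes that $\TTT\varphi_{1,x}(x)-\alpha[0]\leqslant 0$ (which is your easy case $\varphi_{1,x}(w)\leqslant c(x,w)+\alpha[0]$ rephrased via the positive semi-group), so that $\varphi_{1,x}\geqslant\psi\geqslant\TTT\varphi_{1,x}-\alpha[0]$, and then invokes the ``in-between'' Lemma~\ref{entre+}. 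Your route is more elementary and self-contained; the paper's route is more structural and has the side benefit of feeding directly into Corollary~\ref{vanish} ($\TTT\varphi_{1,x}(x)-\alpha[0]=0$), which is used repeatedly afterwards.
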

\begin{proof}
By definition of $\varphi_{1,x}$, if $u\<c+\alpha[0]$,
$$\forall y\in X,u(y)-u(x)\leqslant \varphi_{1,x}(y),$$
therefore, $\varphi_x\leqslant \varphi_{1,x}$.
 \\
We then notice that $\TTT \varphi_{1,x}(x)-\alpha[0]\leqslant 0$. As a matter of fact, for any $x_1\in X$ we have
$$\varphi_{1,x}(x_1)-c(x,x_1)-\alpha[0]\leqslant 0,$$
by definition of the function $\varphi_1$. Taking the supremum on $x_1$, we get the result.

Let us define the function 
$\psi$ by 
\begin{itemize}
 \item $\psi(y)=\varphi_{1,x}(y)$ if $y\neq x$,
\item $\psi (x)=0$.
\end{itemize}
Since $\varphi_{1,x}\geqslant \psi \geqslant \TTT \varphi_{1,x}-\alpha[0]$ the ``in-between'' lemma (\ref{entre+}) gives that the function $\psi$ is a critical sub-solution. But $\psi$ vanishes at $x$ and is greater than $\varphi_x$, therefore $\psi=\varphi_x$.
\end{proof}
As a corollary of the previous proof we also obtain the following:
\begin{co}\label{vanish}
The following equality holds: 
$$\forall x\in X,\TTT \varphi_{1,x}(x)-\alpha[0]= 0.$$
\end{co}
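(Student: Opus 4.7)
The inequality $\TTT \varphi_{1,x}(x) - \alpha[0] \leq 0$ has already been obtained in the course of proving Proposition~\ref{identification}: for every $x_1 \in X$, taking $k = 1$ in the infimum defining $\varphi_{1,x}(x_1)$ yields $\varphi_{1,x}(x_1) \leq c(x,x_1) + \alpha[0]$, and the supremum defining $\TTT \varphi_{1,x}(x)$ inherits this bound. So only the reverse inequality $\TTT \varphi_{1,x}(x) \geq \alpha[0]$ remains to be proved.

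My plan is to produce this lower bound by comparing $\varphi_{1,x}$ with a continuous positive weak KAM solution. By Theorem~\ref{kam} there exists a continuous $u$ with $\TTT u = u + \alpha[0]$; in particular $\TTT u(x) - u(x) = \alpha[0]$. The shifted function $u - u(x)$ is still a critical sub-solution (sub-solutions are invariant under addition of a constant) and vanishes at $x$, so by the very definition of $\varphi$ one gets
$$u(y) - u(x) \leq \varphi(x,y) \quad \text{for every } y \in X.$$
I would then upgrade this to $u - u(x) \leq \varphi_{1,x}$ on all of $X$: away from the diagonal this is the identity $\varphi = \varphi_1$ from Proposition~\ref{identification}, and at $y = x$ it reduces to $0 \leq \varphi_{1,x}(x)$, which is exactly the second statement of that same proposition.

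Finally, since $\TTT$ is monotone and commutes with additive constants, applying it at $x$ to the pointwise comparison gives
$$\TTT \varphi_{1,x}(x) \geq \TTT(u - u(x))(x) = \TTT u(x) - u(x) = \alpha[0],$$
which combined with the upper bound produces the claimed equality. The only mildly delicate step is handling $y = x$ in the comparison between $u - u(x)$ and $\varphi_{1,x}$ — since $\varphi_{1,x}$ may genuinely jump up at $x$ — but the bound $\varphi_{1,x}(x) \geq 0$ already recorded in Proposition~\ref{identification} settles it without further effort, so I do not anticipate any serious obstacle.
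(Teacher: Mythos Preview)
Your argument is correct. The upper bound is indeed taken verbatim from the proof of Proposition~\ref{identification}, and for the lower bound your comparison with a positive weak KAM solution goes through: $u-u(x)$ is a critical sub-solution vanishing at $x$, hence bounded above by $\varphi_x$; Proposition~\ref{identification} upgrades this to $u-u(x)\leqslant\varphi_{1,x}$ on all of $X$ (off the diagonal by the identity $\varphi=\varphi_1$, at $x$ by $\varphi_{1,x}(x)\geqslant 0$); and monotonicity of $\TTT$ together with $\TTT u=u+\alpha[0]$ then yields $\TTT\varphi_{1,x}(x)\geqslant\alpha[0]$.

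The paper takes a different route. It argues by contradiction: assuming $\TTT\varphi_{1,x}(x)-\alpha[0]\leqslant -\e<0$, it lowers $\varphi_{1,x}$ at the single point $x$ to the value $-\e$, obtaining a function $\psi$ satisfying $\TTT\varphi_{1,x}-\alpha[0]\leqslant\psi\leqslant\varphi_{1,x}$; the ``in between'' lemma~\ref{entre+} then forces $\psi$ to be a sub-solution, and for $y\neq x$ one computes $\psi(y)-\psi(x)=\varphi_x(y)+\e>\varphi_x(y)$, contradicting the maximality built into the definition of $\varphi$. Your proof is more direct and avoids the contradiction altogether, at the price of invoking the existence of a positive weak KAM solution (Theorem~\ref{kam}); the paper's proof stays internal to the sub-solution machinery and reuses the ``in between'' trick that drives Proposition~\ref{identification} itself. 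Both are short and clean.
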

\begin{proof}
Let us fix an $x\in X$. We just saw that  $\TTT \varphi_{1,x}(x)-\alpha[0]\leqslant 0$. Assume now by contradiction that we can find an $\e >0$ such that
$$\TTT \varphi_{1,x}(x)-\alpha[0]\leqslant -\e<0=\varphi_x(x)\leqslant \varphi_{1,x}(x).$$
By analogy with the previous proof, let us define the function
$\psi$ by 
\begin{itemize}
 \item $\psi(y)=\varphi_{1,x}(y)$ if $y\neq x$,
\item $\psi (x)=-\e$
\end{itemize}
Since $\varphi_{1,x}\geqslant \psi \geqslant \TTT \varphi_{1,x}-\alpha[0]$ the ``in between'' lemma (\ref{entre+}) gives that the function $\psi$ is a critical sub-solution. But if $y\neq x$ then $\psi(y)-\psi(x)>\varphi_x(y)$ which is in contradiction with the definition of $\varphi$.
\end{proof}
In the following, we will use this lemma:
 \begin{lm}\label{LO}
 Let $u:X\rightarrow \R{}$ be a function and $n\in \mathbb{N}$, then
 $$\T{n}\TT{n}u\geqslant u\mathrm{\ \ and\ \  }\TT{n}\T{n}u\leqslant u.$$
  Moreover, if
 $u$ is a negative (resp. positive) weak KAM solution then
 $$\T{n}\TT{n}u= u\ \ (\mathrm{resp.}\ \   \TT{n}\T{n}u= u).$$
  Finally, the operators
 $\ttt\circ \TTT$ and $\ttt\circ \TTT$ are idempotent.
 \end{lm}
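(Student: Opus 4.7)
The plan is to extract everything from a single Galois-type adjunction, namely that for any functions $u, v : X \to \mathbb{R}$,
$$\ttt v \geqslant u \iff v \geqslant \TTT u,$$
which follows directly by unwinding both sides into the condition $u(x) \leqslant v(y) + c(y,x)$ for all $(x,y) \in X^2$. Setting $v = \TTT u$ immediately yields $\ttt \TTT u \geqslant u$, and setting $u = \ttt v$ yields $\TTT \ttt v \leqslant v$; these are the base cases of the two desired global inequalities.

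For the iterated statements I would proceed by induction on $n$, using the (easy to check) facts that both $\ttt$ and $\TTT$ are monotone and commute with addition of real constants. Assuming $\T{n} \TT{n} u \geqslant u$ for every $u$, apply the hypothesis to $\TTT u$ in place of $u$ to get $\T{n} \TT{n} \TTT u \geqslant \TTT u$, then apply $\ttt$ to deduce $\T{n+1} \TT{n+1} u \geqslant \ttt \TTT u \geqslant u$. The dual induction gives $\TT{n} \T{n} u \leqslant u$.

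For the weak KAM case, suppose $u = \ttt u + \alpha[0]$. Iterating gives $\T{n} u = u - n \alpha[0]$. Since such a fixed point is in particular critically dominated, one has $\TTT u \leqslant u + \alpha[0]$, and iterating monotonically yields $\TT{n} u \leqslant u + n \alpha[0]$. Applying the monotone operator $\T{n}$ (pushing the additive constant through) gives $\T{n} \TT{n} u \leqslant \T{n} u + n \alpha[0] = u$, which combined with the reverse inequality from the first part produces equality. The case of a positive weak KAM solution is completely symmetric.

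Finally, for idempotence of $\ttt \circ \TTT$ (and analogously of $\TTT \circ \ttt$), I would combine the two base-case adjunction inequalities with monotonicity: applying the monotone operator $\ttt \TTT$ to $\ttt \TTT u \geqslant u$ gives $(\ttt \TTT)^2 u \geqslant \ttt \TTT u$, while $\TTT \ttt v \leqslant v$ applied to $v = \TTT u$ and then pushed through $\ttt$ gives $(\ttt \TTT)^2 u \leqslant \ttt \TTT u$. No step here presents a genuine obstacle; the entire lemma is a formal consequence of the adjunction together with monotonicity, requiring no analytic input from $X$ or $c$ beyond what is needed to make the semi-groups well-defined.
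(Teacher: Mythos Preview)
Your proof is correct and follows essentially the same approach as the paper's. The Galois adjunction is a clean packaging of what the paper does by direct computation (taking $y=x$ in the unwound $\inf$-$\sup$), and your induction, weak KAM equality, and idempotence arguments match the paper's almost verbatim.
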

 \begin{proof}
 By symmetry, we will only prove one half of the lemma. By definition,
 for a given $x\in X$ we have
 $$\ttt\TTT u(x)=\inf_z \sup_y u(y)-c(z,y)+c(z,x),$$
  and this quantity
 is greater than $u(x)$ (take $y=x$). Now the first part of the
 proposition is obtained by induction or by applying the argument to $c_n$ instead of $c$.\\
  If $u$ is a negative
 weak KAM solution, we have that $u\geqslant \TTT u-\alpha[0]$ (this is
 always true for a dominated function) and therefore 
 $$u=\ttt u+\alpha[0]\geqslant \ttt \TTT u.$$
  Hence we have in fact an
 equality. Once again, the general result follows by induction or by using $c_n$ instead of $c$.
 
  Finally, we have already seen that
 $(\ttt\circ \TTT)^2\geqslant \ttt\circ \TTT$. For the reversed
 inequality,
 note that since similarly, $\TTT\circ \ttt\leqslant Id$,
 $$\ttt\circ (\TTT\circ \ttt)\circ \TTT\leqslant \ttt\circ \TTT.$$
 \end{proof}
\begin{pr}\label{equ}
Let $x\in X$ be any point, then the following inequality holds: $\varphi_{1,x}(x)\leqslant \ttt \varphi_x(x)+\alpha[0]$. In particular, the function $\varphi_{1,x}$ is continuous. Moreover, if the point $x$ is not isolated, we have in fact an equality: $\varphi_{1,x}(x)= \ttt \varphi_x(x)+\alpha[0]$.
\end{pr}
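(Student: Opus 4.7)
The plan is to first establish the inequality $\varphi_{1,x}(x) \leqslant \ttt\varphi_x(x) + \alpha[0]$ by unwrapping the right-hand side and bounding it termwise. Writing $\ttt\varphi_x(x)+\alpha[0] = \inf_{y\in X}\bigl[\varphi_x(y)+c(y,x)\bigr]+\alpha[0]$, I would split into $y=x$ and $y\neq x$. The case $y=x$ reduces to showing $\varphi_{1,x}(x)\leqslant c(x,x)+\alpha[0]$, which is immediate from the definition of $\varphi_{1,x}$ by taking $k=1$. For $y\neq x$, Proposition \ref{identification} gives $\varphi_x(y)=\varphi_{1,x}(y)$, so I need $\varphi_{1,x}(x)\leqslant \varphi_{1,x}(y)+c(y,x)+\alpha[0]$. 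The key combinatorial input is the concatenation bound $c_{k+1}(x,x)\leqslant c_k(x,y)+c(y,x)$, which follows directly from the definition of $c_{k+1}$ by plugging in an almost-minimizing chain from $x$ to $y$ of length $k$ followed by the jump $y\to x$. Adding $(k+1)\alpha[0]$ and taking the infimum over $k\geqslant 1$ yields the desired estimate.

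Once the inequality is secured, continuity of $\varphi_{1,x}$ follows. Upper semi-continuity everywhere comes from Proposition \ref{semi}. On $X\setminus\{x\}$ Proposition \ref{identification} gives $\varphi_{1,x}=\varphi_x$, which is continuous there by Proposition \ref{previous}(2). To control the behavior as $y\to x$ through $X\setminus\{x\}$, I use that on $X\setminus\{x\}$ we have $\varphi_x=\ttt\varphi_x+\alpha[0]$ and that $\ttt\varphi_x$ is continuous on all of $X$ (the Lax-Oleinik semi-group sends dominated functions to continuous functions, see \ref{HH}); therefore $\lim_{y\to x,\,y\neq x}\varphi_{1,x}(y) = \ttt\varphi_x(x)+\alpha[0]$.

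If $x$ is isolated there is nothing further to check. Otherwise, upper semi-continuity combined with this limit gives the reverse inequality $\ttt\varphi_x(x)+\alpha[0]\leqslant \varphi_{1,x}(x)$, which together with the first step forces $\varphi_{1,x}(x) = \ttt\varphi_x(x)+\alpha[0]$ and simultaneously produces continuity of $\varphi_{1,x}$ at $x$. The only genuinely non-bookkeeping ingredient is the concatenation bound on the $c_k$; everything else is a matter of carefully exploiting Propositions \ref{pprevious}, \ref{previous}, \ref{semi}, and \ref{identification} together with the continuity of Lax-Oleinik images.
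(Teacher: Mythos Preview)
Your proof is correct, and the continuity and non-isolated equality portions are essentially the same as the paper's. The first step, however, is argued differently.

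For the inequality $\varphi_{1,x}(x)\leqslant \ttt\varphi_x(x)+\alpha[0]$, the paper goes through the positive semi-group: it invokes Corollary \ref{vanish} to get $\TTT\varphi_{1,x}(x)-\alpha[0]=0$, deduces $\TTT\varphi_{1,x}-\alpha[0]\leqslant \varphi_x$ everywhere, then applies monotonicity of $\ttt$ together with Lemma \ref{LO} ($\ttt\TTT\geqslant \mathrm{Id}$) to obtain $\varphi_{1,x}\leqslant \ttt\TTT\varphi_{1,x}\leqslant \ttt\varphi_x+\alpha[0]$. Your route is more direct: you unwrap $\ttt\varphi_x(x)+\alpha[0]$, handle $y=x$ by the $k=1$ term of $\varphi_{1,x}$, and for $y\neq x$ use the concatenation bound on the $c_k$'s. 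Note that your concatenation argument is really a re-derivation of the domination of $\varphi_{1,x}$ already stated in Proposition \ref{semi}: the inequality $\varphi_{1,x}(x)\leqslant \varphi_{1,x}(y)+c(y,x)+\alpha[0]$ is just $\varphi_{1,x}\prec c+\alpha[0]$ at the pair $(y,x)$. One small point worth making explicit: taking the infimum over $k\geqslant 1$ on the left of your concatenation inequality literally gives $\varphi_{2,x}(x)$, not $\varphi_{1,x}(x)$; the conclusion then follows from the monotonicity $\varphi_{1,x}(x)\leqslant \varphi_{2,x}(x)$.

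Your approach has the advantage of avoiding $\TTT$ and Lemma \ref{LO} entirely, relying only on the combinatorial structure of $\varphi_n$ and Proposition \ref{identification}. The paper's approach, by contrast, exhibits the inequality as an instance of the general $\ttt\TTT\geqslant\mathrm{Id}$ duality, which is more conceptual but requires more machinery.
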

\begin{proof}
We have already seen (\ref{vanish}) that $\TTT \varphi_{1,x}(x)-\alpha[0]=0$. Therefore, the following inequality is true:
$$\TTT \varphi_{1,x}-\alpha[0]\leqslant \varphi_{x}.$$
As a matter of fact, it is true at $x$, and at other points $y$, it is a consequence of the equality $\varphi_{1,x}(y)=\varphi_x(y)$ (\ref{identification}) and of the fact that since $ \varphi_{1,x}$ is a critical sub-solution, we have $\TTT \varphi_{1,x}-\alpha[0]\leqslant \varphi_{1,x}$. By the monotonicity of the Lax-Oleinik semi-group, the following holds 
$$\ttt\TTT \varphi_{1,x}\leqslant \ttt\varphi_{x}+\alpha[0],$$
which by (\ref{LO}) gives us
$$\varphi_{1,x}\leqslant \ttt\TTT \varphi_{1,x}\leqslant \ttt\varphi_{x}+\alpha[0].$$
By (\ref{previous}) and (\ref{identification}) these inequalities are in fact equalities, except possibly at $x$. Since by (\ref{HH}) the function $\ttt\varphi_{x}+\alpha[0]$ is continuous it is clear that $\varphi_{1,x}$ is lower semi-continuous and therefore continuous by (\ref{semi}). \\
Finally, the equality $\varphi_{1,x}(x)= \ttt \varphi_x(x)+\alpha[0]$ whenever $x$ is not isolated is a straight consequence of the continuity of the functions $\varphi_{1,x}$ and  $\ttt \varphi_x+\alpha[0]$ and of the fact that they coincide on $X\setminus \{x\}$.
\end{proof}
Actually, the last equality of the previous proposition (\ref{equ}) holds even when $x$ is isolated, as shown below:
\begin{pr}\label{egalitetotale}
For any $x\in X$, the following holds
$$\forall y\in X,\varphi_{1,x}(y)= \ttt \varphi_x(y)+\alpha[0].$$
\end{pr}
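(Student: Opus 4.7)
The equality off the diagonal is automatic from earlier results: for $y\neq x$, Proposition \ref{identification} gives $\varphi_{1,x}(y)=\varphi_x(y)$, and Proposition \ref{previous}(2) gives $\varphi_x(y)=\ttt\varphi_x(y)+\alpha[0]$. Consequently, the only content of the statement is the equality at $y=x$, where Proposition \ref{equ} already supplies one direction, $\varphi_{1,x}(x)\leqslant \ttt\varphi_x(x)+\alpha[0]$. The plan is therefore to prove the reverse inequality $\ttt\varphi_x(x)+\alpha[0]\leqslant \varphi_{1,x}(x)$.

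Since $\varphi_{1,x}(x)=\inf_{k\geqslant 1}\bigl(c_k(x,x)+k\alpha[0]\bigr)$ by definition, it is enough to show that $\ttt\varphi_x(x)+\alpha[0]\leqslant c_k(x,x)+k\alpha[0]$ for every $k\geqslant 1$. The case $k=1$ is immediate: plugging $y=x$ into the infimum defining $\ttt\varphi_x(x)$ and using $\varphi_x(x)=0$ yields $\ttt\varphi_x(x)+\alpha[0]\leqslant c(x,x)+\alpha[0]=c_1(x,x)+\alpha[0]$. For $k\geqslant 2$, given $\e>0$, I will pick a chain $(x=x_0,x_1,\dots,x_{k-1},x_k=x)$ almost-minimizing $c_k(x,x)$ (to within $\e$) and use the penultimate point $y=x_{k-1}$ as a test point in the infimum defining $\ttt\varphi_x(x)$.

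The decisive bound needed is $\varphi_x(x_{k-1})\leqslant c_{k-1}(x,x_{k-1})+(k-1)\alpha[0]$. If $x_{k-1}\neq x$, this follows from $\varphi_x=\varphi_{1,x}$ off the diagonal (Proposition \ref{identification}) combined with the defining infimum of $\varphi_{1,x}$. If $x_{k-1}=x$, then $\varphi_x(x)=0\leqslant \varphi_{1,x}(x)\leqslant c_{k-1}(x,x)+(k-1)\alpha[0]$, again by Proposition \ref{identification}. Since $c_{k-1}(x,x_{k-1})\leqslant \sum_{i=0}^{k-2}c(x_i,x_{i+1})$, adding $c(x_{k-1},x)+\alpha[0]$ gives $\ttt\varphi_x(x)+\alpha[0]\leqslant \varphi_x(x_{k-1})+c(x_{k-1},x)+\alpha[0]\leqslant c_k(x,x)+\e+k\alpha[0]$. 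Letting $\e\to 0$ and then taking the infimum over $k\geqslant 1$ yields the desired inequality.

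I do not anticipate a serious obstacle: the argument is a short unwinding of definitions that mirrors Proposition \ref{equ} in the reverse direction, the key ingredient being that $\varphi_{1,x}$ majorizes each $c_k(x,\cdot)+k\alpha[0]$ by construction. The only mild subtlety is the degenerate case $x_{k-1}=x$ in the minimizing chain, which is absorbed painlessly by the nonnegativity $\varphi_{1,x}(x)\geqslant 0$ from Proposition \ref{identification}.
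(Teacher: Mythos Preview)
Your proposal is correct and follows essentially the same route as the paper: reduce to the equality at $y=x$, invoke Proposition~\ref{equ} for one inequality, and for the reverse inequality bound $\ttt\varphi_x(x)+\alpha[0]$ above by each term $c_k(x,x)+k\alpha[0]$ in the infimum defining $\varphi_{1,x}(x)$. The only cosmetic difference is that the paper packages the step $k\geqslant 2$ more compactly, using monotonicity of $\ttt$ together with the identity $\ttt\varphi_{1,x}+\alpha[0]=\varphi_{2,x}$ from Proposition~\ref{semi} to obtain $\ttt\varphi_x(x)+\alpha[0]\leqslant\varphi_{2,x}(x)$ in one stroke, whereas you unwind the same bound chain-by-chain; the content is identical.
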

\begin{proof}
We have already proven the result when $y\neq x$ and we proved above (\ref{equ}) that
$$\varphi_{1,x}(x)\leqslant \ttt \varphi_x(x)+\alpha[0].$$
Let us prove the reverse inequality. By definition and monotonicity of the Lax-Oleinik semi-group, since $\varphi_{1,x}\geqslant \varphi_x$ the following holds
\begin{eqnarray*}
\forall x\in X, \ttt \varphi_x(x)+\alpha[0]&=&\inf_{y\in X}\varphi_x(y)+c(y,x)+\alpha[0]\\
&\leqslant &\inf_{y\in X}\varphi_{1,x}(y)+c(y,x)+\alpha[0]\\
&=&\ttt \varphi_{1,x}(x)+\alpha[0]=\varphi_{2,x}(x),
\end{eqnarray*}
where we used the last part of \ref{semi} for the last equality. Taking $y=x$ in infimum of the Lax-Oleinik we also have
$$\ttt \varphi_x(x)+\alpha[0]\leqslant c(x,x)+\alpha[0].$$
Since $\varphi_{1,x}(x)=\min (c(x,x)+\alpha[0],\varphi_{2,x}(x))$, this finishes the proof of the proposition.
\end{proof}
Obviously, similar results hold when considering the positive time Lax-Oleinik semi-group $\TTT$ therefore, we obtain the following:

\begin{pr}\label{semi+}
 For any $n\in \mathbb{N}^*$, and any $x$, the function $\varphi^{n,x}=-\varphi_n(.,x)$ is critically dominated. Finally, $\TTT \varphi^{n,x}-\alpha [0]=\varphi^{n+1,x}$.
\end{pr}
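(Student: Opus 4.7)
The plan is to mirror the proof of Proposition~\ref{semi} line by line after a time reversal. The cleanest way is to exploit the symmetry already emphasized in the paper: set $\overline{c}(x,y)=c(y,x)$. Reversing every chain shows that the iterated costs built from $\overline{c}$ satisfy $\overline{c}_k(x,y)=c_k(y,x)$, so the potentials built from $\overline{c}$ satisfy $\overline{\varphi}_n(x,y)=\varphi_n(y,x)$. Since $\TTT u=-T_{\overline{c}}^-(-u)$ by the identity recalled just before Lemma~\ref{entre+}, one has $\varphi^{n,x}=-\overline{\varphi}_{n,x}$, and the two claims for $\TTT$ applied to $\varphi^{n,x}$ translate directly into the corresponding claims for $T_{\overline{c}}^-$ applied to $\overline{\varphi}_{n,x}$, which are exactly Proposition~\ref{semi} for the cost $\overline{c}$.

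Alternatively, the direct verification is two short computations that parallel those of Proposition~\ref{semi}. For critical domination of $\varphi^{n,x}$, prepending a point $y$ to a near-optimal $k$-chain from $z$ to $x$ yields the combinatorial inequality $c_{k+1}(y,x)\leqslant c(y,z)+c_k(z,x)$; adding $(k+1)\alpha[0]$, taking the infimum over $k\geqslant n$, and using the monotonicity $\varphi_n\leqslant\varphi_{n+1}$ leads to $\varphi_n(y,x)-\varphi_n(z,x)\leqslant c(y,z)+\alpha[0]$, which is exactly $\varphi^{n,x}(z)-\varphi^{n,x}(y)\leqslant c(y,z)+\alpha[0]$.

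For the semi-group identity, the expansion
\begin{eqnarray*}
\TTT\varphi^{n,x}(z)-\alpha[0] &=& -\inf_{y\in X}\bigl[c(z,y)+\varphi_n(y,x)+\alpha[0]\bigr]\\
 &=& -\inf_{k\geqslant n}\inf_{y\in X}\bigl[c(z,y)+c_k(y,x)+(k+1)\alpha[0]\bigr]
\end{eqnarray*}
reduces everything to the tautological identity $\inf_{y\in X}[c(z,y)+c_k(y,x)]=c_{k+1}(z,x)$, which just says that the first intermediate vertex of an optimal $(k+1)$-chain from $z$ to $x$ is free. Reindexing $j=k+1\geqslant n+1$ then gives $\TTT\varphi^{n,x}(z)-\alpha[0]=-\varphi_{n+1}(z,x)=\varphi^{n+1,x}(z)$. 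I do not expect any real obstacle; the only point requiring a touch of care is keeping track of the order of the arguments of $c$ (now $c(z,y)$ rather than $c(y,z)$) so that it matches the definition of $c_{k+1}(z,x)$.
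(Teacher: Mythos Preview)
Your proposal is correct and matches the paper's approach: the paper gives no proof for this proposition, simply prefacing it with ``Obviously, similar results hold when considering the positive time Lax-Oleinik semi-group $\TTT$,'' which is exactly your first paragraph's time-reversal argument via $\overline{c}(x,y)=c(y,x)$. Your alternative direct verification is also correct and is the line-by-line transcription of the proof of Proposition~\ref{semi}; the only care needed is indeed the order of arguments in $c$, which you handle properly.
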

\begin{lm}\label{vanish+}
The following equality holds: 
$$\forall x\in X,\ttt \varphi^{1,x}(x)+\alpha[0]= 0.$$
\end{lm}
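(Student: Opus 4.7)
The plan is to mimic the proof of Corollary~\ref{vanish} with the roles of $\ttt$ and $\TTT$ swapped: formally, passing to the reversed cost $\bar c(y,z)=c(z,y)$ exchanges the two semi-groups and turns $\varphi_{1,x}$ into $-\varphi_1(\cdot,x)=\varphi^{1,x}$, so the identity to prove is exactly the ``mirror'' of $\TTT\varphi_{1,x}(x)-\alpha[0]=0$.

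First I would dispatch the easy inequality $\ttt\varphi^{1,x}(x)+\alpha[0]\geqslant 0$. Expanding the definition,
$$\ttt\varphi^{1,x}(x)+\alpha[0]=\inf_{y\in X}\bigl(c(y,x)+\alpha[0]-\varphi_1(y,x)\bigr),$$
and each summand is non-negative because $\varphi_1(y,x)\leqslant c_1(y,x)+\alpha[0]=c(y,x)+\alpha[0]$, obtained by taking $k=1$ in the infimum defining $\varphi_1$.

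For the reverse inequality I would argue by contradiction, supposing $\ttt\varphi^{1,x}(x)+\alpha[0]\geqslant\e>0$ for some $\e$, and mimic the construction used in the proof of Corollary~\ref{vanish}: set $\psi(y)=\varphi^{1,x}(y)$ for $y\neq x$ and $\psi(x)=\e$. To apply the ``in between'' lemma~\ref{entre} with $u=\varphi^{1,x}$ (a critical sub-solution by Proposition~\ref{semi+}), I would check $\varphi^{1,x}\leqslant\psi\leqslant\ttt\varphi^{1,x}+\alpha[0]$. Away from $x$ both bounds are immediate, the left being an equality and the right following from the fact that $\varphi^{1,x}$ is itself a sub-solution. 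At $x$ itself, the right inequality is the standing assumption, while the left uses $\varphi^{1,x}(x)=-\varphi_1(x,x)\leqslant 0\leqslant\e$, relying on $\varphi_1(x,x)\geqslant\varphi(x,x)=0$ from Proposition~\ref{identification}. Thus $\psi\prec c+\alpha[0]$.

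Finally, since $\psi$ is a critical sub-solution, the definition of $\varphi$ as a supremum yields $\psi(x)-\psi(y)\leqslant\varphi(y,x)$ for every $y\in X$. Choosing any $y\neq x$ and substituting $\psi(y)=-\varphi_1(y,x)=-\varphi(y,x)$ via Proposition~\ref{identification}, this reads $\e+\varphi(y,x)\leqslant\varphi(y,x)$, hence $\e\leqslant 0$, the desired contradiction. (The degenerate case $X=\{x\}$ forces $\alpha[0]=-c(x,x)$ and the identity is then trivial.) The only step that demands any real attention, and thus the main obstacle, is the verification of the sandwich for $\psi$ at the point $x$; once this is in hand the argument is a formal dualisation of Corollary~\ref{vanish}.
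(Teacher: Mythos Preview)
Your proof is correct and follows essentially the same approach as the paper: the paper simply states that this lemma is obtained from Corollary~\ref{vanish} by the time-reversal symmetry $\TTT u=-T_{\bar c}^-(-u)$, and you have written out that dualisation explicitly, using Lemma~\ref{entre} in place of Lemma~\ref{entre+} and swapping the roles of $\varphi_{1,x}$ and $\varphi^{1,x}$. Your explicit treatment of the degenerate one-point case is in fact slightly more careful than the paper's own argument for Corollary~\ref{vanish}.
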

\begin{pr}\label{equ+}
Let $x\in X$ be any point, then the following equality holds: $\varphi^{1,x}(x)= \TTT \varphi^x(x)-\alpha[0]$. In particular, the function $\varphi^{1,x}$ is continuous. 
\end{pr}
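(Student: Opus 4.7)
The plan is to mirror the combined argument of Propositions \ref{equ} and \ref{egalitetotale} using the positive Lax--Oleinik semi-group $\TTT$ in place of $\ttt$. The three ingredients I will use are Lemma \ref{vanish+} (which gives $\ttt\varphi^{1,x}(x)+\alpha[0]=0=\varphi^x(x)$), the equality $\varphi^{1,x}(y)=\varphi^x(y)$ for $y\neq x$ (which follows from \ref{identification} by negating in the second variable), and the sub-solution inequality $\ttt\varphi^{1,x}+\alpha[0]\geqslant\varphi^{1,x}$.

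\textbf{Step 1: the lower bound $\varphi^{1,x}(x)\geqslant\TTT\varphi^x(x)-\alpha[0]$.} From the three ingredients one obtains $\ttt\varphi^{1,x}+\alpha[0]\geqslant\varphi^x$ pointwise (equality at $x$ by Lemma \ref{vanish+}; at $y\neq x$, $\ttt\varphi^{1,x}(y)+\alpha[0]\geqslant\varphi^{1,x}(y)=\varphi^x(y)$). Applying the monotone operator $\TTT$ and then invoking Lemma \ref{LO} (which gives $\TTT\ttt u\leqslant u$) yields
$$\varphi^{1,x}\;\geqslant\;\TTT\ttt\varphi^{1,x}\;\geqslant\;\TTT\varphi^x-\alpha[0].$$

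\textbf{Step 2: the reverse inequality at $x$,} imitating \ref{egalitetotale}. Since $\varphi_1\geqslant\varphi$ we have $\varphi^x\geqslant\varphi^{1,x}$, hence
$$\TTT\varphi^x(x)-\alpha[0]\;\geqslant\;\TTT\varphi^{1,x}(x)-\alpha[0]\;=\;\varphi^{2,x}(x),$$
using Proposition \ref{semi+}. Choosing $y=x$ in the supremum defining $\TTT\varphi^x(x)$ also gives $\TTT\varphi^x(x)-\alpha[0]\geqslant -c(x,x)-\alpha[0]$. Now $\varphi^{1,x}(x)=-\varphi_1(x,x)=\max\bigl(-c(x,x)-\alpha[0],\,\varphi^{2,x}(x)\bigr)$, so combining the two lower bounds gives $\varphi^{1,x}(x)\leqslant\TTT\varphi^x(x)-\alpha[0]$.

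\textbf{Step 3: continuity of $\varphi^{1,x}$.} The positive-semi-group analog of Proposition \ref{previous} gives $\varphi^x=\TTT\varphi^x-\alpha[0]$ on $X\setminus\{x\}$; combined with $\varphi^{1,x}=\varphi^x$ off $x$ and with the equality just proved at $x$, we conclude $\varphi^{1,x}=\TTT\varphi^x-\alpha[0]$ on all of $X$. Since $\varphi^x$ is a critical sub-solution, $\TTT\varphi^x$ is continuous by \ref{HH}, hence so is $\varphi^{1,x}$. No substantial obstacle is expected: the whole proof is a symmetric transcription of the $\ttt$-arguments used for $\varphi_{1,x}$, and the only care required is to keep the inequality directions consistent when swapping the roles of $\ttt$ and $\TTT$ and when negating the second argument to pass between $\varphi_{n,x}$ and $\varphi^{n,x}$.
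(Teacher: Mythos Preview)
Your proof is correct and follows the approach the paper intends: Proposition \ref{equ+} is stated without proof as the time-reversed counterpart of Propositions \ref{equ} and \ref{egalitetotale}, and your Steps 1 and 2 are precisely the symmetric transcriptions of those two arguments (swapping $\ttt\leftrightarrow\TTT$ and passing from $\varphi_{n,x}$ to $\varphi^{n,x}$ by negation). The only minor difference is in Step 3: the paper's argument for continuity of $\varphi_{1,x}$ in \ref{equ} uses the one-sided inequality together with the upper semi-continuity of $\varphi_1$ from \ref{semi}, whereas you obtain continuity after establishing the full identity $\varphi^{1,x}=\TTT\varphi^x-\alpha[0]$ on all of $X$; both routes are valid and yours is arguably cleaner.
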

We are now able to prove the following theorem:
\begin{Th}\label{conti}
  The family of  functions $\varphi_n, n\in \mathbb{N}$ is locally equi-continuous on $X^2$. In particular, $\varphi_1$ is a continuous extension of $\varphi$ restricted to $X^2\setminus \Delta X$.
\end{Th}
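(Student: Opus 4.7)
The strategy is to write each section of $\varphi_n$ as a Lax--Oleinik image of a critical sub-solution and then to invoke a uniform regularity property of $\ttt$ and $\TTT$ that does not depend on the particular sub-solution.

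By Proposition~\ref{semi} we have $\varphi_{n+1,x}=\ttt\varphi_{n,x}+\alpha[0]$ for every $n\geq 1$, and Proposition~\ref{egalitetotale} gives $\varphi_{1,x}=\ttt\varphi_x+\alpha[0]$. Symmetrically, Proposition~\ref{semi+} together with the $\TTT$-analogue of \ref{egalitetotale} writes $-\varphi_n(\,\cdot\,,y)$ as a $\TTT$-image of a critical sub-solution. I would then establish the following uniform modulus on compact sets: for any compact $K\subset X$ there is a modulus $\omega_K$ such that $|\ttt u(y)-\ttt u(y')|\leq \omega_K(d(y,y'))$ for every critical sub-solution $u$ and every $y,y'\in K$, and likewise for $\TTT$. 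The idea is to combine the super-linearity of $c$ with the uniform linear growth of sub-solutions (appendix result \ref{apriori}) to confine the approximate minimizers defining $\ttt u(y)$, for $y\in K$, to a bounded set $B_K$ independent of $u$; uniform continuity of $c$ on $B_K\times K$ (Heine--Cantor) then produces $\omega_K$. Applied to the above representations, this yields control of each variable of $\varphi_n$ that is uniform in $n\geq 1$ and locally uniform in the other variable.

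Local equi-continuity of $\{\varphi_n\}_{n\geq 1}$ on $X^2$ then follows from the triangle inequality
\[
|\varphi_n(x',y')-\varphi_n(x,y)|\leq |\varphi_n(x',y')-\varphi_n(x,y')|+|\varphi_n(x,y')-\varphi_n(x,y)|,
\]
applied with the two uniform moduli in the two variables. For the ``in particular'' statement, Proposition~\ref{identification} identifies $\varphi_1$ with $\varphi$ on $X^2\setminus\Delta X$, and the joint continuity of $\varphi_1$ just established exhibits $\varphi_1$ as a continuous extension to all of $X^2$ of $\varphi|_{X^2\setminus\Delta X}$. The main obstacle is precisely the uniform-in-$u$ modulus for $\ttt u$: the naive domination bound $|\ttt u(y)-\ttt u(y')|\leq c(y,y')+\alpha[0]$ is useless near the diagonal since $c(y,y)$ need not vanish, so one must return to the Lax--Oleinik formula, use super-linearity together with the a priori linear lower bound on sub-solutions to localize the minimizers uniformly in $u$, and then invoke Heine--Cantor on the resulting compact. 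Once this uniformity is extracted from the appendix results \ref{HH} and \ref{apriori}, the remainder of the argument is essentially bookkeeping.
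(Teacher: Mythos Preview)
Your proposal is correct and follows essentially the same approach as the paper: represent each section $\varphi_n(x,\cdot)$ and $-\varphi_n(\cdot,y)$ as a Lax--Oleinik image of a critical sub-solution (via \ref{semi}, \ref{egalitetotale}, \ref{semi+}, and the $\TTT$-analogue of \ref{egalitetotale}), invoke the local equi-continuity of $\ttt(\HH(\alpha[0]))$ and $\TTT(\HH(\alpha[0]))$ from \ref{HH}(iii) (which is precisely the uniform-in-$u$ modulus you describe), and conclude by the triangle inequality in the two variables. The paper simply cites \ref{HH} directly for the uniform modulus rather than re-deriving it, but your sketch of that derivation is exactly the content of the proof of \ref{HH}(ii)--(iii).
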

\begin{proof}
We first prove the continuity of $\varphi_1$. Let $(x,y)\in X^2$
By (\ref{HH}) we know that images of critically dominated functions by the Lax-Oleinik semi-groups are locally equi-continuous. Therefore, let us consider relatively compact neighborhoods $V$ and $V'$ of respectively $x$ and $y$ and let $\om$ be a modulus of continuity for  images of critically  dominated functions by the Lax-Oleinik semi-groups restricted to $V$ and $V'$. Let now $(x',y')\in V\times V'$. Using (\ref{egalitetotale}) and (\ref{equ+}) we obtain
\begin{eqnarray*}
|\varphi_1(x,y)-\varphi_1(x',y')|&\leqslant &|\varphi_1(x,y)-\varphi_1(x,y')|+|\varphi_1(x,y')-\varphi_1(x',y')|\\
&\leqslant&|\ttt\varphi_{x}(y)-\ttt\varphi_{x}(y')|+|\TTT\varphi^{y'}(x)-\TTT\varphi{y'}(x')|\\
&\leqslant&\om(\d(y,y'))+\om(\d(x,x')).
\end{eqnarray*}
This proves the continuity of $\varphi_1$. Similarly, if $n\geqslant 2$ we have 
\begin{eqnarray*}
|\varphi_n(x,y)-\varphi_n(x',y')|&\leqslant &|\varphi_n(x,y)-\varphi_n(x,y')|+|\varphi_n(x,y')-\varphi_n(x',y')|\\
&\leqslant&|\ttt\varphi_{n-1,x}(y)-\ttt\varphi_{n-1,x}(y')|\\
&+&|\TTT\varphi^{n-1,y'}(x)-\TTT\varphi^{n-1,y'}(x')|\\
&\leqslant&\om(\d(y,y'))+\om(\d(x,x')).
\end{eqnarray*}
This proves the local equi-continuity.
\end{proof}
\begin{rem}
\rm
It is clear that whenever a point $x\in X$ is not isolated, the continuous extension of the potential $\varphi$ is unique at $(x,x)$.
\end{rem}

In what follows, we will need this definition:
\begin{df}\rm
Let us define the Peierls barrier
$$h(x,y)=\liminf_{n\rightarrow +\infty} c_n(x,y)+n\alpha [0]=\lim_{n\rightarrow +\infty}\varphi_n(x,y).$$
\end{df}
\begin{lm}
The following inequality is verified:
 $\varphi \leqslant h$.
 \end{lm}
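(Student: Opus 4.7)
The plan is to observe that the inequality $\varphi\leqslant h$ is essentially a direct consequence of combining two facts that have already been established: (i) the pointwise bound $\varphi(x,y)\leqslant c(x,y)+\alpha[0]$ from part (1) of Proposition \ref{pprevious}, and (ii) the triangle inequality $\varphi(x,z)\leqslant \varphi(x,y)+\varphi(y,z)$ from part (4) of the same proposition. The hint in the paragraph above the lemma (``This quantity is always greater or equal to $\varphi(x,y)$ by the triangular inequality'') indicates exactly this approach.

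Concretely, fix $(x,y)\in X^2$ and $n\in \mathbb{N}^*$. For any intermediate chain $(x_1,\ldots,x_{n-1})\in X^{n-1}$, I would set $x_0=x$, $x_n=y$ and iterate the triangle inequality to get
\[
\varphi(x,y)\leqslant \sum_{i=0}^{n-1}\varphi(x_i,x_{i+1})\leqslant \sum_{i=0}^{n-1}\bigl(c(x_i,x_{i+1})+\alpha[0]\bigr)=\sum_{i=0}^{n-1}c(x_i,x_{i+1})+n\alpha[0].
\]
Taking the infimum over all choices of the intermediate chain gives $\varphi(x,y)\leqslant c_n(x,y)+n\alpha[0]$, valid for every $n$.

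Finally, passing to the liminf over $n$ yields $\varphi(x,y)\leqslant \liminf_{n\to+\infty}\bigl(c_n(x,y)+n\alpha[0]\bigr)=h(x,y)$, which is the claim. (Equivalently, taking instead the infimum over $k\geqslant n$ first gives $\varphi(x,y)\leqslant \varphi_n(x,y)$, and the monotone limit in $n$ produces $h$.) There is no genuine obstacle here: the only mild point to notice is that the domination inequality used to bound each $\varphi(x_i,x_{i+1})$ is universal in the chain, which is precisely what allows the sum to telescope into $c_n(x,y)+n\alpha[0]$ before taking the infimum.
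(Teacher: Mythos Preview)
Your proof is correct and follows essentially the same approach as the paper: both use the triangular inequality for $\varphi$ together with the bound $\varphi\leqslant c+\alpha[0]$ to obtain $\varphi(x,y)\leqslant c_n(x,y)+n\alpha[0]$ for every $n$, and then pass to the $\liminf$. The paper even signals this route explicitly in the sentence preceding the definition of $h$ that you quote.
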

 \begin{proof}
 This point comes from the fact that by
 definition,
$$h(x,y)=\liminf_{n\rightarrow +\infty} c_n(x,y)+n\alpha [0]$$
 while  by the triangular inequality we have 
$$\varphi(x,y)\leqslant \inf_{n\rightarrow +\infty}
 c_n(x,y)+n\alpha [0].$$
 \end{proof} 
In Mather's original work (\cite{Ma2}) , the projected Aubry set is not defined the way we did,
however, we will now prove that our definition is equivalent to the one using the Peierls barrier.
Note that the Peierls barrier $h$ takes
its values in $\R{}\cup \{+\infty\}$ and that it is continuous
whenever it is finite by equi-continuity of the $\varphi_n$ (\ref{conti}). Furthermore, since the  functions $(\varphi_n)$ are critically dominated, it follows that  family of functions $(\varphi_n)_{n\in \mathbb{N}}$ is equi-Lipschitz in the large (\ref{lip}). Therefore, the Peierls barrier is either finite everywhere or $+\infty$ everywhere.
 First, let us give some properties of $h$ which
are proved in the compact case in \cite{Be} and in the continuous case
in \cite{FaSi}. The proof carries on similarly in the general case with the use of \ref{apriori}:

\begin{pr}\label{hh}
For each $n,m\in \mathbb{N}$, $x,y,z\in X$, we have
$$\varphi_{n+m}(x,z)\leqslant \varphi_n(x,y)+c_m(y,z)+m\alpha [0],$$
$$h(x,z)\leqslant h(x,y)+c_m(y,z)+m\alpha [0],$$
$$h(x,z)\leqslant c_m(x,y)+h(y,z)+m\alpha [0].$$
This gives another proof that the function $h$ is either everywhere finite or identically $+\infty$. Moreover, when $h$ is finite, by \ref{conti}, it is continuous.
\\
For each $l,m,n\in \mathbb{N}$ such that $n\leqslant l+m$, for each $x,y,z\in X$ we have
$$\varphi_n(x,z)\leqslant \varphi_m(x,y)+\varphi_l(y,z),$$
$$h(x,z)\leqslant h(x,y)+\varphi_n(y,z),$$
$$h(x,z)\leqslant h(x,y)+h(y,z).$$
\end{pr}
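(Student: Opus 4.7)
My plan is to base everything on the single combinatorial fact that concatenating a chain of length $k$ from $x$ to $y$ with a chain of length $k'$ from $y$ to $z$ produces a chain of length $k+k'$ from $x$ to $z$, which gives
\[
c_{k+k'}(x,z) \leqslant c_k(x,y) + c_{k'}(y,z).
\]
Adding $(k+k')\alpha[0]$ on both sides and infimizing yields all six inequalities by essentially the same recipe: choose the ranges of the indices over which one takes the infimum, then pass to $\liminf$ or $\lim$ along $n \to \infty$ where needed.

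Concretely, for (1) I would fix $m$ and, for each $k \geqslant n$, apply the concatenation inequality to $c_{k+m}$; since $k+m \geqslant n+m$, the quantity $c_{k+m}(x,z)+(k+m)\alpha[0]$ is an upper bound for $\varphi_{n+m}(x,z)$, and taking the infimum over $k \geqslant n$ gives the claim. Inequality (2) follows by letting $n \to \infty$ in (1), and (3) is obtained by the symmetric argument (concatenate a length-$m$ chain from $x$ to $y$ with length-$k$ chains from $y$ to $z$, then let $k \to \infty$). For (4), given $n \leqslant l+m$, I take $k \geqslant m$ and $k' \geqslant l$ so that $k+k' \geqslant n$, and conclude by the monotonicity of $\varphi_\bullet$ in the index. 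Inequality (5) is then (4) with $l=n$ and $m \to \infty$, and (6) is (5) with $n \to \infty$.

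For the finiteness dichotomy I would argue: if $h(x_0,y_0)$ is finite for some pair, then for any $(x,y)$, applying (2) and (3) gives
\[
h(x,y) \leqslant c_1(x,x_0) + h(x_0,y_0) + c_1(y_0,y) + 2\alpha[0],
\]
and the right side is finite because $c$ is real-valued; so $h$ is finite everywhere. Continuity when $h$ is finite is immediate from Theorem~\ref{conti}: the $\varphi_n$ are locally equi-continuous, and $h$ is their pointwise (monotone, since $\varphi_n$ is nondecreasing in $n$) limit, hence inherits a modulus of continuity from the $\varphi_n$ on any relatively compact set.

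No step here should be an obstacle: the core calculation is elementary concatenation, and all remaining content is passing to limits and invoking the already-established equi-continuity. The only mild subtlety is checking the index range $n \leqslant l+m$ in (4), which I handle by noting $\varphi_n$ is monotone in $n$, so any chain of length at least $n$ serves as a valid test element for $\varphi_n(x,z)$.
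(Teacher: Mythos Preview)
Your proposal is correct and follows exactly the standard concatenation argument the paper has in mind; indeed the paper does not spell out a proof at all, merely saying that it ``carries on similarly in the general case'' as in \cite{Be} and \cite{FaSi}. Your derivation of each inequality from the basic sub-additivity $c_{k+k'}(x,z)\leqslant c_k(x,y)+c_{k'}(y,z)$, together with the monotonicity of $\varphi_n$ in $n$ and passage to the limit, is precisely that argument. The only cosmetic point: in your derivation of (5) from (4) you should make explicit that you also send the first index to infinity (e.g.\ take it equal to $m$), not just $m$; as written, ``$l=n$ and $m\to\infty$'' leaves the left-hand side fixed.
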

\begin{Th}\label{Peierl-}
 If $x\in X$, and the Peierls barrier is finite, let us define the
 functions $h_x=h(x,.)$ and $h^x=-h(.,x)$. Then $h^x$, $h_x$ are
 respectively a positive and a negative weak KAM solution. 
\end{Th}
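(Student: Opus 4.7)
The plan is to prove the two statements in parallel. Since $\varphi^{n,x} = -\varphi_n(\cdot,x)$ satisfies a recursion for $\TTT$ dual to the one $\varphi_{n,x}$ satisfies for $\ttt$, I focus on showing that $h_x$ is a negative weak KAM solution, i.e.\ $h_x = \ttt h_x + \alpha[0]$, and remark at the end that the argument for $h^x$ is identical with $\TTT$, $\varphi^{n,x}$, and Proposition~\ref{semi+} in place of $\ttt$, $\varphi_{n,x}$, and Proposition~\ref{semi}.

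First I would verify that $h_x$ is a critical sub-solution. Since $h_x$ is the pointwise increasing limit of the functions $\varphi_{n,x}$, each of which is critically dominated by Proposition~\ref{semi}, the inequality $h_x(y) - h_x(z) \leqslant c(z,y) + \alpha[0]$ passes to the limit. This gives one of the two required inequalities for free, namely $h_x \leqslant \ttt h_x + \alpha[0]$.

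The main work is the reverse inequality $\ttt h_x + \alpha[0] \leqslant h_x$. I would exploit the identity $\varphi_{n+1,x} = \ttt \varphi_{n,x} + \alpha[0]$ from Proposition~\ref{semi} and pass to the limit. Fix $y \in X$ and $\e > 0$. Since $h_x(y) = \liminf_n c_n(x,y) + n\alpha[0]$, there exist arbitrarily large $n$ with $c_n(x,y) + n\alpha[0] \leqslant h_x(y) + \e/2$. For such $n$, the definition of $c_n$ as an infimum over intermediate points provides $z_n \in X$ with $c_{n-1}(x, z_n) + c(z_n, y) \leqslant c_n(x,y) + \e/2$, whence, using $\varphi_{n-1,x}(z_n) \leqslant c_{n-1}(x,z_n)+(n-1)\alpha[0]$ from the very definition of $\varphi_{n-1}$,
$$\varphi_{n-1,x}(z_n) + c(z_n, y) + \alpha[0] \leqslant c_{n-1}(x, z_n) + (n-1)\alpha[0] + c(z_n, y) + \alpha[0] \leqslant h_x(y) + \e.$$

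The main obstacle is to extract a limit from the $z_n$. Using the super-linearity of $c$ together with the uniform linear growth bounds on critical sub-solutions from the appendix (cf.\ \ref{apriori}), the $z_n$ must lie in a bounded, and hence relatively compact, set. Passing to a subsequence we may assume $z_{n_j} \to z^* \in X$. Continuity of $c$ gives $c(z_{n_j}, y) \to c(z^*, y)$, and by the local equi-continuity of $(\varphi_n)$ proved in Theorem~\ref{conti} combined with Dini's theorem, the monotone pointwise convergence $\varphi_{n,x} \uparrow h_x$ is uniform on compact subsets, so $\varphi_{n_j - 1, x}(z_{n_j}) \to h_x(z^*)$. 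Passing to the limit in the displayed inequality yields $h_x(z^*) + c(z^*, y) + \alpha[0] \leqslant h_x(y) + \e$, whence $\ttt h_x(y) + \alpha[0] \leqslant h_x(y) + \e$. Letting $\e \to 0$ completes the proof that $h_x$ is a negative weak KAM solution, and the parallel argument using $\TTT$, $\varphi^{n,x} \downarrow h^x$, and Proposition~\ref{semi+} shows that $h^x$ is a positive weak KAM solution.
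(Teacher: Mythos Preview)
Your proof is correct. Both you and the paper rely on Dini's theorem to upgrade the monotone convergence $\varphi_{n,x}\uparrow h_x$ to uniform convergence on compacts, and both use the recursion $\ttt\varphi_{n,x}+\alpha[0]=\varphi_{n+1,x}$ from Proposition~\ref{semi}. The paper's argument is considerably shorter, however: once uniform convergence on compacts is in hand, it simply invokes the continuity of $\ttt$ for the compact-open topology (Proposition~\ref{HH}(ii)) to write
\[
\ttt h_x+\alpha[0]=\ttt\Bigl(\lim_{n\to\infty}\varphi_{n,x}\Bigr)+\alpha[0]=\lim_{n\to\infty}\ttt\varphi_{n,x}+\alpha[0]=\lim_{n\to\infty}\varphi_{n+1,x}=h_x,
\]
and is done. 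What you do---extracting near-minimizers $z_n$, using the a~priori compactness from the appendix to trap them in a compact set, passing to a subsequential limit $z^*$, and then using uniform convergence to conclude---is exactly the content of that continuity statement unpacked by hand. So your route is not really different, just more explicit; the gain in the paper's version is brevity, while your version has the virtue of making the compactness mechanism visible at the point of use.
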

\begin{proof}
  We only prove the theorem for the functions $h_x$, the
   rest is similar. Recall that $h_x$ is the limit of the $\varphi_{n,x}$ and is therefore critically dominated. Moreover, by Dini's theorem, since the sequence of functions $\varphi_{n,x}$ is increasing, its convergence is uniform on compact subsets. Therefore, by the continuity property of $\ttt$ (\ref{HH}) the following holds
 \begin{eqnarray*}
 \ttt h_x+\alpha[0]&=&\ttt\left(\lim_{n\rightarrow +\infty} \varphi_{n,x}+\alpha[0]\right)\\
 &=&\lim_{n\rightarrow +\infty}\ttt \varphi_{n,x}+\alpha[0]
 \\&=&\lim_{n\rightarrow +\infty} \varphi_{n+1,x}+\alpha[0]
 \\&=&h_x.
 \end{eqnarray*}
\end{proof}
\begin{co}
 For each $n\in \mathbb{N}$, $x,y\in X$ we have 
$$h(x,y)=\min_{z\in X}h(x,z)+c_n(z,y)+n\alpha [0]=\min_{z\in X}c_n(x,z)+n\alpha [0]+h(z,y).$$
\end{co}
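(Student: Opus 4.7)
The plan is to deduce this corollary by iterating the weak KAM fixed-point relations obtained in Theorem \ref{Peierl-} and then upgrading ``$\inf$'' to ``$\min$'' using the compactness/coercivity framework that was already used to prove Proposition \ref{strict}.

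First I would recall that by Theorem \ref{Peierl-}, whenever $h$ is finite (which is the only nontrivial case, since otherwise the statement is vacuous), for every $x\in X$ the function $h_x=h(x,\cdot)$ is a negative weak KAM solution and $h^y=-h(\cdot,y)$ is a positive weak KAM solution. Hence $h_x=\ttt h_x+\alpha[0]$ and $h^y=\TTT h^y-\alpha[0]$. Iterating these identities $n$ times (which is legitimate for $n=0$ trivially and for general $n$ by induction, since both semigroups are well-defined on continuous $\alpha[0]$-dominated functions by \ref{HH}), one obtains
\[
h_x=\T{n}h_x+n\alpha[0], \qquad h^y=\TT{n}h^y-n\alpha[0].
\]
Unfolding the definition of $\T{n}$ by $c_n$ (which gives $\T{n}u(y)=\inf_{z\in X}u(z)+c_n(z,y)$, a routine induction using associativity of infima) and symmetrically for $\TT{n}$, these two fixed-point relations become
\[
h(x,y)=\inf_{z\in X}h(x,z)+c_n(z,y)+n\alpha[0]=\inf_{z\in X}c_n(x,z)+n\alpha[0]+h(z,y).
\]
This yields the two equalities with ``$\inf$'' in place of ``$\min$''.

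The remaining and only real step is to argue that each infimum is attained. For the first infimum, fix $x,y\in X$ and consider $F(z):=h(x,z)+c_n(z,y)+n\alpha[0]$. By the previous proposition \ref{hh} (or directly by the equi-Lipschitz in the large property recalled just before Theorem \ref{Peierl-}), the function $z\mapsto h(x,z)$ grows at most linearly in $\d(x,z)$, while $c_n$ is uniformly super-linear (it inherits super-linearity from $c$ via an easy iteration of condition \ref{unif}). Consequently $F(z)\to+\infty$ as $\d(y,z)\to+\infty$, so the infimum of $F$ is the infimum over a closed ball of sufficiently large radius. Since $h$ is continuous (by Theorem \ref{conti} and the fact that $h$ is finite), $c_n$ is continuous (by the proposition preceding \ref{semi}), and closed balls are compact, $F$ attains its minimum. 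The second infimum is handled identically after swapping the roles of the two arguments, using that $h(\cdot,y)$ is also equi-Lipschitz in the large by the symmetric statement.

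The main ``obstacle'' is really just bookkeeping: one must check that the iterated Lax--Oleinik operator $\T{n}$ coincides with the infimum against $c_n$, and one must invoke the appendix lemma \ref{apriori} (via \ref{hh}) to get the at-most-linear control on $h_x$ required to apply the coercivity argument. Both are essentially the same mechanism already used throughout the paper, so no new ingredient is needed.
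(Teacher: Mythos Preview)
Your proposal is correct and follows essentially the same route as the paper. The paper's proof is a one-liner citing Theorem \ref{Peierl-} (so that $h_x$ and $h^y$ are weak KAM solutions, hence fixed by the iterated semigroups) together with point (iv) of Proposition \ref{HH} (attainment of the infimum for lower semi-continuous, Lipschitz-in-the-large functions); your write-up simply unpacks those two citations, replacing the appeal to \ref{HH}(iv) by the underlying coercivity argument applied directly to $c_n$.
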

\begin{proof}
It is a straight consequence of (\ref{Peierl-}) and of point (iv) of (\ref{HH}).
\end{proof}

We will now prove a characterization of the Aubry set:

\begin{Th}\label{A}
 The projected Aubry set $\A$ coincides with the
 set $$\A=\{x,h(x,x)=0\}.$$
\end{Th}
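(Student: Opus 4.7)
The plan is to prove the two inclusions $\A \subseteq \{x : h(x,x) = 0\}$ and $\{x : h(x,x) = 0\} \subseteq \A$ separately, using the identification of the Peierls barrier with iterates of the Lax-Oleinik semi-group applied to $\varphi_x$ that has been developed in the preceding propositions. No recurrence or compactness argument beyond the fixed point structure of $\varphi_x$ and $h_x$ will be needed.

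For the inclusion $\A \subseteq \{x : h(x,x) = 0\}$, I would start from $x \in \A$. By Proposition \ref{previous}(3), the function $\varphi_x$ is a negative weak KAM solution, that is $T_c^- \varphi_x + \alpha[0] = \varphi_x$. Combined with Proposition \ref{egalitetotale}, this gives $\varphi_{1,x} = T_c^- \varphi_x + \alpha[0] = \varphi_x$ as functions on $X$ (crucially including the potentially delicate point $x$ itself, where a priori only $\varphi_{1,x}(x) \geq \varphi_x(x) = 0$ is obvious). A straightforward induction using the relation $\varphi_{n+1,x} = T_c^- \varphi_{n,x} + \alpha[0]$ from Proposition \ref{semi} then shows $\varphi_{n,x} = \varphi_x$ for every integer $n \geq 1$. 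Passing to the limit in the definition of the Peierls barrier yields $h_x = \varphi_x$, so in particular $h(x,x) = \varphi_x(x) = 0$.

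For the reverse inclusion, suppose $h(x,x) = 0$. Since the Peierls barrier is either everywhere finite or identically $+\infty$ (as observed after the definition of $h$), the assumption forces $h$ to be finite everywhere. Theorem \ref{Peierl-} then applies and $h_x$ is a negative weak KAM solution, hence in particular a critical sub-solution. By the defining property of $\varphi$, $h_x(y) - h_x(x) \leq \varphi(x,y) = \varphi_x(y)$ for every $y \in X$; since $h_x(x) = h(x,x) = 0$, this reduces to $h_x \leq \varphi_x$. Conversely, the inequality $\varphi \leq h$ proved just before the statement of Proposition \ref{hh} gives $\varphi_x \leq h_x$, so $\varphi_x = h_x$. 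In particular, $\varphi_x$ is a weak KAM solution, and Proposition \ref{previous}(3) concludes that $x \in \A$. The only step that required careful thought is the identity $\varphi_{1,x}(x) = \varphi_x(x)$ in the forward direction, which is precisely the content of Proposition \ref{egalitetotale} and is what allows the induction to start cleanly.
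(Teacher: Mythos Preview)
Your argument is correct and takes a genuinely different route from the paper's own proof. The paper argues via the representation formula of Theorem~\ref{Peierl},
\[
h(x,x)=\sup_{\substack{u\<c+\alpha[0]\\ n,m\in\mathbb{N}}}\T{n}u(x)-\TT{m}u(x)+(n+m)\alpha[0],
\]
together with Lemma~\ref{IIII} (the characterisation of $\A_u$ through constancy of the iterated Lax--Oleinik sequences) and Lemma~\ref{I} (the existence of a single continuous $u$ with $\A_u=\A$); the case $h\equiv+\infty$ is treated separately by showing that no calibrated bi-infinite chain can then exist. Your proof, by contrast, pivots on the equivalence ``$x\in\A\iff\varphi_x$ is a weak KAM solution'' from Proposition~\ref{previous}(3): in one direction you propagate the fixed-point identity $\varphi_x=\ttt\varphi_x+\alpha[0]$ through Propositions~\ref{egalitetotale} and~\ref{semi} to get $\varphi_{n,x}=\varphi_x$ for all $n$, hence $h_x=\varphi_x$; in the other you squeeze $\varphi_x$ between $h_x$ (via $\varphi\leqslant h$) and $h_x$ (via $h_x(x)=0$ and the defining property of $\varphi$) to conclude $\varphi_x=h_x$ is a weak KAM solution. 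Your approach stays entirely within the Ma\~n\'e-potential machinery of Section~2 and avoids both the supremum representation of $h$ and the special sub-solution of Lemma~\ref{I}; the paper's approach, on the other hand, ties the Peierls-barrier characterisation directly back to the calibrated-chain definition of $\A$ from Section~1. Note also that your argument handles the case $h\equiv+\infty$ implicitly (both sets are then empty and both inclusions are vacuous), whereas the paper spells this out.
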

Before proving \ref{A}, we need some results about what happens when $h$ is finite. They are very closely related to results in the compact case.
\begin{Th}\label{inegh}
 Let $u\<c+\alpha [0]$, then for all $n,m\in \mathbb {N}$, and for every $x,y\in X$ we have 
$$\forall (x,y)\in X^2,h(x,y)\geqslant \T{n}u(y)-\TT{m}u(x)+(n+m)\alpha [0].$$
\end{Th}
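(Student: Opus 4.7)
The plan is to reduce the inequality for $h$ to a uniform finite-stage inequality for $c_k+k\alpha[0]$ and then pass to the limit. More precisely, I would prove that for every integer $k\geqslant n+m$,
$$c_k(x,y)+k\alpha[0]\geqslant \T{n}u(y)-\TT{m}u(x)+(n+m)\alpha[0].$$
Since by definition $\varphi_{n+m}(x,y)=\inf_{k\geqslant n+m}c_k(x,y)+k\alpha[0]$ and $h(x,y)\geqslant \varphi_{n+m}(x,y)$, the theorem follows at once from this finite-stage estimate.

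To prove the estimate, fix $k\geqslant n+m$ and an arbitrary chain $x=x_0,x_1,\ldots,x_k=y$. I would split it into three consecutive segments of lengths $m$, $k-n-m$ and $n$ respectively, with intermediate points $z=x_m$ and $w=x_{k-n}$ (when $k=n+m$ the middle segment is empty and $z=w$). On the initial segment, the fact that $\TT{m}u(x)=\sup_{z'}u(z')-c_m(x,z')$ gives $c_m(x,z)\geqslant u(z)-\TT{m}u(x)$, while the definition of $c_m$ as an infimum over chains yields $\sum_{i=0}^{m-1}c(x_i,x_{i+1})\geqslant c_m(x,z)$. Symmetrically, on the final segment, $\T{n}u(y)=\inf_{z'}u(z')+c_n(z',y)\leqslant u(w)+c_n(w,y)\leqslant u(w)+\sum_{i=k-n}^{k-1}c(x_i,x_{i+1})$. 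On the middle segment, iterating the sub-solution inequality $u(x_{i+1})-u(x_i)\leqslant c(x_i,x_{i+1})+\alpha[0]$ over $i=m,\ldots,k-n-1$ produces $\sum_{i=m}^{k-n-1}c(x_i,x_{i+1})\geqslant u(w)-u(z)-(k-n-m)\alpha[0]$.

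Adding the three inequalities makes the unknown values $u(z)$ and $u(w)$ cancel, leaving
$$\sum_{i=0}^{k-1}c(x_i,x_{i+1})\geqslant \T{n}u(y)-\TT{m}u(x)-(k-n-m)\alpha[0].$$
Taking the infimum over all chains from $x$ to $y$ and adding $k\alpha[0]$ to both sides yields the announced finite-stage estimate, and the conclusion then follows by passing to $\liminf$ in $k$.

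There is no genuine difficulty here; the only thing to be careful about is the bookkeeping of the two Lax-Oleinik semi-groups (the $\inf$ defining $\T{n}$ provides a lower bound on the tail cost through the choice $w=x_{k-n}$, whereas the $\sup$ defining $\TT{m}$ provides a lower bound on the head cost through $z=x_m$) together with the degenerate cases $n=0$, $m=0$ or $k=n+m$, in which one of the three segments is empty and its contribution is vacuously zero.
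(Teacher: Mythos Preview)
Your argument is correct and close in spirit to the paper's, though organized a little differently. The paper first proves the case $k=n+m$ by splitting the chain at a single intermediate point (so that one half controls $\T{m}u(y)$ and the other $\TT{n}u(x)$), obtaining $\T{m}u(y)-\TT{n}u(x)\leqslant c_{n+m}(x,y)$; it then reaches arbitrary $k\geqslant n+m$ by invoking the monotonicity $\TT{n'}u-n'\alpha[0]\leqslant \TT{n}u-n\alpha[0]$ for $n'\geqslant n$ before taking the $\liminf$. Your three-segment split handles all $k\geqslant n+m$ in one stroke, the middle segment absorbing the extra $k-n-m$ steps directly via the sub-solution inequality; this is exactly the content of the paper's monotonicity step, unfolded at the chain level. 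Either packaging works, and yours is arguably the more transparent of the two.
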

\begin{proof}
Let $n,m\in \mathbb{N}$ and let $x_{-n},\ldots ,x_m$ verify $x_{-n}=x$ and $x_m=y$. By definition of the Lax-Oleinik semi-group, we have 
$$\T{m} u(y)\leqslant u(x_0)+\sum_{i=0}^{m-1}c(x_i,x_{i+1}),$$
and similarly,
$$\TT{n} u(x)\geqslant u(x_0)-\sum_{i=-n}^{-1}c(x_i,x_{i+1}).$$
Putting these two inequalities together, we find that 
$$\T{m}u(y)-\TT{n}u(x)\leqslant \sum_{i=-n}^{m-1}c(x_i,x_{i+1}).$$
Since the chain between $x$ and $y$ was taken arbitrarily, we obtain 
$$\T{m}u(y)-\TT{n}u(x)\leqslant c_{n+m}(x,y).$$
If $n'>n$, since $u\< c+\alpha[0]$ we have that 
$$\TT{n}u-n\alpha[0]\geqslant \TT{n'}u-n'\alpha[0].$$
Therefore the following hold
$$
\T{m}u(y)-\TT{n}u(x)\leqslant\T{m}u(y)-\TT{n'}u(x).$$
Therefore,
$$
\T{m}u(y)-\TT{n}u(x)+(m+n)\alpha[0]\leqslant c_{n'+m}(x,y)+(m+n')\alpha[0].
$$
Finally, letting $n'$ go to infinity and taking the liminf, we obtain
\begin{eqnarray*}
\T{m}u(y)-\TT{n}u(x)+(m+n)\alpha[0]&\leqslant&\liminf_{n'\to +\infty}c_{n'+m}(x,y)+(m+n')\alpha[0]\\
 &\leqslant&h(x,y).
\end{eqnarray*}
\end{proof}

 An easy consequence of the previous theorem is that whenever the
 function $h$ is finite, then if $u$ is a critically dominated
 function, the sequences $\T{n}u+n\alpha [0]$ and $\TT{n}u-n\alpha
 [0]$ are both simply bounded since they are respectively non decreasing and non increasing and therefore converge to respectively $u_-$ and $u_+$. Moreover, by equi-continuity (\ref{HH}), the convergences are uniform on compact
 subsets. Therefore, by continuity of the semi-groups for the compact open topology (see \ref{HH}), $u_-$ is
 a negative weak KAM solution and $u_+$ is a positive weak KAM
 solution.
Let us state a well known and useful lemma (cf. \cite{Con}):
\begin{lm}
 Let $(u_{\alpha})_{\alpha \in A}$ be a family of critically dominated functions. Let 
$$u=\inf_{\alpha \in A}u_{\alpha},$$
 this function is either identically $-\infty$ either it is finite everywhere.
 Moreover if $u$ is finite, the following relation holds:
$$\ttt \inf_{\alpha \in A}u_\alpha =\inf_{\alpha \in A}\ttt u_\alpha.$$
If furthermore  the $u_\alpha$ are weak KAM solutions and if the function $u$ is not identically $-\infty$ then it is a weak KAM solution.
\end{lm}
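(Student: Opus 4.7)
The plan is to establish the three assertions of the lemma in order. The key structural fact I would use throughout is that any $\alpha[0]$-dominated function $v$ satisfies the two-sided bound
$$v(x) - c(y,x) - \alpha[0] \;\leqslant\; v(y) \;\leqslant\; v(x) + c(x,y) + \alpha[0]$$
for all $x,y \in X$, which is just the domination inequality $v \prec c+\alpha[0]$ written with the roles of $x$ and $y$ interchanged.

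For the dichotomy, I would apply these bounds to each $u_\alpha$ and take the infimum over $\alpha$. From $u_\alpha(y) \leqslant u_\alpha(x) + c(x,y) + \alpha[0]$, infimizing on both sides yields $u(y) \leqslant u(x) + c(x,y) + \alpha[0]$; from $u_\alpha(y) \geqslant u_\alpha(x) - c(y,x) - \alpha[0]$, infimizing on both sides yields $u(y) \geqslant u(x) - c(y,x) - \alpha[0]$. Hence if $u(x_0) \in \mathbb{R}$ for some $x_0$, both bounds force $u(y) \in \mathbb{R}$ for every $y$; otherwise $u \equiv -\infty$. The same computation also shows $u \prec c + \alpha[0]$ whenever $u$ is finite.

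For the commutation relation, assuming $u$ is finite, I would simply exchange the two infima:
$$\ttt u(x) = \inf_{y\in X}\Bigl(\inf_{\alpha\in A} u_\alpha(y)\Bigr) + c(y,x) = \inf_{\alpha\in A}\inf_{y\in X}\bigl(u_\alpha(y)+c(y,x)\bigr) = \inf_{\alpha\in A}\ttt u_\alpha(x).$$
This is purely formal; finiteness of $u$ together with the a priori estimates of the appendix (which bound $\ttt u_\alpha$ from below in terms of $u_\alpha$) ensure that both sides are well-defined real numbers, so no issue with $\pm\infty$ arises.

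For the last claim, suppose each $u_\alpha$ is a negative weak KAM solution, i.e.\ $u_\alpha = \ttt u_\alpha + \alpha[0]$, and suppose $u$ is not identically $-\infty$. By the first step, $u$ is finite everywhere; by the second step,
$$\ttt u + \alpha[0] = \inf_{\alpha\in A}\bigl(\ttt u_\alpha + \alpha[0]\bigr) = \inf_{\alpha\in A} u_\alpha = u,$$
so $u$ is itself a negative weak KAM solution. The only mildly delicate point is the dichotomy: once one sees that the two-sided domination bound propagates finiteness from one point to all points, the remaining assertions reduce to a formal exchange of infima.
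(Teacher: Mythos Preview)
Your proof is correct and follows essentially the same approach as the paper: the dichotomy is obtained from stability of the domination inequality under infima, the commutation relation is a formal exchange of infima, and the weak KAM conclusion follows by combining the two. Your treatment of the dichotomy is slightly more explicit (you write out both sides of the domination bound, whereas the paper only states one inequality and leaves the reader to swap $x$ and $y$), but the argument is the same.
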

\begin{proof}
 The fact that $u$ is either identically $-\infty$ or everywhere
 finite comes from the fact that the domination hypothesis is stable by taking an infimum, therefore,
$$\forall (x,y)\in X,u(y)\leqslant u(x)+c(x,y)+\alpha[0].$$ 
Assume now that $u$
 is finite.
The following holds:
\begin{eqnarray*}
\ttt u(x) &=&\inf_{y\in X}u(y)+c(y,x)\\
&=&\inf_{y\in X}\inf_{\alpha\in A}u_\alpha(y)+c(y,x)\\
&=&\inf_{\alpha\in A}\inf_{y\in X}u_\alpha(y)+c(y,x)\\
&=&\inf_{\alpha\in A}\ttt u_\alpha(x).
\end{eqnarray*}
If moreover the $u_\alpha$ are weak KAM solutions, the following holds:
\begin{eqnarray*}
 \ttt u(x)+\alpha[0]&=&\inf_{\alpha\in A}\inf_{y\in X}u_\alpha(y)+c(y,x)+\alpha[0]\\
&=&\inf_{\alpha\in A}\ttt u_\alpha(x)+\alpha[0]\\
&=&\inf_{\alpha\in A}u_\alpha(x)=u(x).
\end{eqnarray*}

\end{proof}
As a consequence, still in the case when $h$ is finite, we have the
following theorem which first part was already established.
\begin{Th}
Assume $h$ is finite. Let $u\<c+\alpha [0]$ be a dominated function, then the sequences
 $\T{n}u+n\alpha [0]$ and $\TT{n}u-n\alpha [0]$ converge respectively
 to $u_-$ and $u_+$, a negative weak KAM solution and a positive weak
 KAM solution. Moreover, the functions $u_+$ and $u_-$ verify the
 following properties:
$$u_-=\inf_{w_-\geqslant u}w_-$$ 
where the infimum is taken over negative weak KAM solutions.
$$u_+=\sup_{w_+\leqslant u}w_+$$
 where the supremum is taken over positive weak KAM solutions.
\end{Th}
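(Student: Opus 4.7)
The plan is to first recover the existence and weak KAM character of $u_{\pm}$ from the discussion that precedes the theorem, then establish the two variational formulas by a standard monotone iteration.

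For the convergence part, I start from the observation that $u\<c+\alpha[0]$ is equivalent to $u\leqslant \ttt u+\alpha[0]$ (and dually to $u\geqslant \TTT u-\alpha[0]$). Iterating and using the monotonicity of the semi-groups, the sequence $\T{n}u+n\alpha[0]$ is non-decreasing in $n$, while $\TT{n}u-n\alpha[0]$ is non-increasing. Theorem~\ref{inegh} applied with $m=0$ at a fixed base point $x$ yields $\T{n}u(y)+n\alpha[0]\leqslant h(x,y)+u(x)$ for every $y$, which is a uniform-in-$n$ upper bound because $h$ is finite; symmetrically, taking $n=0$ gives a uniform lower bound on $\TT{n}u-n\alpha[0]$. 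Pointwise convergence to some $u_-$ and $u_+$ follows. Local equi-continuity from~\ref{HH} promotes this to uniform convergence on compact subsets, and the continuity of $\ttt$ for the compact-open topology allows me to pass to the limit in $\T{n+1}u+(n+1)\alpha[0]=\ttt(\T{n}u+n\alpha[0])+\alpha[0]$ to obtain $u_-=\ttt u_-+\alpha[0]$; the argument for $u_+$ is identical.

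For $u_-=\inf_{w_-\geqslant u} w_-$, I argue by two inequalities. Since $\T{n}u+n\alpha[0]$ is non-decreasing with first term $u$, its limit $u_-$ satisfies $u_-\geqslant u$, so $u_-$ is an admissible element in the infimum and hence $\inf_{w_-\geqslant u}w_-\leqslant u_-$. Conversely, let $w_-$ be any negative weak KAM solution with $w_-\geqslant u$. Then $\T{n}w_-+n\alpha[0]=w_-$ for every $n\in \mathbb{N}$, and the monotonicity of $\ttt$ applied to $u\leqslant w_-$ gives $\T{n}u+n\alpha[0]\leqslant \T{n}w_-+n\alpha[0]=w_-$. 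Letting $n\to +\infty$ yields $u_-\leqslant w_-$, and taking the infimum over such $w_-$ closes the loop.

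The equality $u_+=\sup_{w_+\leqslant u}w_+$ is established by the symmetric argument: $\TT{n}u-n\alpha[0]$ is non-increasing and above every positive weak KAM solution $w_+\leqslant u$ because $\TT{n}w_+-n\alpha[0]=w_+$. I do not anticipate a serious obstacle in this proof; the only point that really uses the hypothesis that $h$ is finite is the uniform pointwise bound that turns the monotone iterates into convergent sequences, which is precisely the content of Theorem~\ref{inegh}.
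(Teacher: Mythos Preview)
Your proof is correct and follows essentially the same route as the paper: the convergence argument you spell out is exactly the discussion preceding the theorem (monotonicity of the iterates, pointwise bound via Theorem~\ref{inegh}, equi-continuity from~\ref{HH}, continuity of $\ttt$), and for the variational formula both you and the paper use that $u_-$ is an admissible competitor together with the monotonicity inequality $\T{n}u+n\alpha[0]\leqslant \T{n}w_-+n\alpha[0]=w_-$. The only cosmetic difference is that the paper first appeals to the preceding lemma to note that $u'=\inf_{w_-\geqslant u}w_-$ is itself a negative weak KAM solution, whereas you bypass that lemma entirely with the direct two-inequality sandwich; your version is in fact slightly more economical.
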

\begin{proof}
 Let us consider the function $u'$ defined by
$$u'=\inf_{w_-\geqslant u}w_-.$$ 
First notice that the set $\{w_-,w_-\geqslant u\}$ such that $w_-$ is a weak KAM solution is not empty because $u_-$ belongs to it.
The previous lemma shows that $u'$ is
 a negative weak KAM solution. Moreover, we have the following inequality: 
 $$\T{n}u+n\alpha
 [0]\leqslant \T{n}u'+n\alpha [0]=u'.$$
  Since the sequence
 $\T{n}u+n\alpha [0]$ converges to the weak KAM solution $u_-$ which is smaller or equal to $u'$, we have in fact $u_-=u'$.
The proof for the time positive case is the same.
\end{proof}

We now give a representation formula for the
function $h$:
\begin{Th}\label{Peierl}
 The Peierls barrier satisfies
$$\forall x,y\in X, h(x,y)=\sup_{\substack{u\<c+\alpha [0]\\n,m\in
     \mathbb{N}}}\T{n}u(y)-\TT{m}u(x)+(n+m)\alpha[0].$$
\end{Th}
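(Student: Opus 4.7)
The plan is to prove two inequalities. The $\geqslant$ direction is already established: by Theorem \ref{inegh}, for any critical sub-solution $u$ and any $n,m \in \mathbb{N}$,
$$h(x,y) \geqslant \T{n}u(y) - \TT{m}u(x) + (n+m)\alpha[0],$$
so taking the supremum over all such $u,n,m$ preserves the inequality.

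For the reverse inequality, the idea is to exhibit specific choices attaining $h(x,y)$. Fix $x \in X$ and take $u = \varphi_x$, which is a critical sub-solution by Proposition \ref{previous}(1), together with $m = 0$. With the convention $\TT{0} = \mathrm{Id}$, one has $\TT{0}\varphi_x(x) = \varphi_x(x) = 0$, so the quantity in the supremum reduces to $\T{n}\varphi_x(y) + n\alpha[0]$.

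The key identification I would establish is that
$$\T{n}\varphi_x + n\alpha[0] = \varphi_{n,x} \qquad \text{for every } n \geqslant 1.$$
For $n=1$ this is exactly Proposition \ref{egalitetotale}. For the induction step, assuming the identity for some $n$, apply $\ttt$ and add $\alpha[0]$ to both sides: the right-hand side becomes $\ttt \varphi_{n,x} + \alpha[0] = \varphi_{n+1,x}$ by the last assertion of Proposition \ref{semi}, while the left-hand side is $\T{n+1}\varphi_x + (n+1)\alpha[0]$.

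Since the sequence $(\varphi_{n,x}(y))_n$ is non-decreasing and converges by definition to $h(x,y)$ (finite or $+\infty$), we conclude
$$\sup_{n \geqslant 1} \bigl(\T{n}\varphi_x(y) + n\alpha[0]\bigr) \;=\; \sup_{n \geqslant 1} \varphi_{n,x}(y) \;=\; h(x,y).$$
This choice $(u,n,m) = (\varphi_x, n, 0)$ therefore shows the supremum in the statement is at least $h(x,y)$, and combined with Theorem \ref{inegh} the two inequalities match. There is no real obstacle in this argument; the only thing to spot is the correct choice of sub-solution, and once $u = \varphi_x$ is chosen the proof reduces to iterating the two propositions \ref{egalitetotale} and \ref{semi} already in hand. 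The same reasoning works uniformly regardless of whether $h$ is everywhere finite or identically $+\infty$.
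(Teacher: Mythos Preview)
Your proof is correct and follows essentially the same strategy as the paper: establish $\geqslant$ via Theorem~\ref{inegh}, then exhibit a specific critical sub-solution whose Lax--Oleinik iterates recover the sequence $(\varphi_{n,x})$ and hence $h(x,\cdot)$. The only difference is cosmetic: you take $u=\varphi_x$ with $m=0$ (using $\varphi_x(x)=0$ and Proposition~\ref{egalitetotale} to start the induction), whereas the paper takes $u=\varphi_{1,x}$ with $m=1$ (using Corollary~\ref{vanish}, i.e.\ $\TTT\varphi_{1,x}(x)-\alpha[0]=0$); both choices feed the same recursion $\ttt\varphi_{n,x}+\alpha[0]=\varphi_{n+1,x}$ from Proposition~\ref{semi}.
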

\begin{proof}
 One inequality has been proved in \ref{inegh}, therefore, we only have to find a dominated function which realizes the case of equality. We have already seen (\ref{vanish}) that
\begin{equation}\label{phi1}
\TTT \varphi_{1,x}(x)-\alpha[0]= 0.
\end{equation}
Now using the fact that the sequence of functions 
$$\T{n}\varphi_{1,x}+n\alpha[0]=\varphi_{n+1,x}$$
 converge to $h_x$ we obtain that 
\begin{equation}\label{phi2}
\lim_{n\to +\infty}\T{n}\varphi_{1,x}-\TTT \varphi_{1,x}(x)+(n+1)\alpha[0]= h(x,y).
\end{equation}
This ends the proof.
\end{proof}
\begin{co}\label{0}
 For all positive integer $m$ we have that 
 $$\TT{m} \varphi_{1,x}(x)-m\alpha[0]= 0.$$
For all integer $m$ we have $\TT{m} \varphi_{x}(x)-m\alpha[0]= 0$.
Moreover, the following hold
$$\lim_{n\to +\infty}\T{n}\varphi_{1,x}(y)-\TTT \varphi_{1,x}(x)+(n+1)\alpha[0] = h(x,y),$$
$$\lim_{n\to +\infty}\T{n}\varphi_{x}(y)-\varphi_{x}(x)+n\alpha[0] = h(x,y).$$
\end{co}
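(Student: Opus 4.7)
The proof has four pieces, and the main tools are Theorem~\ref{inegh} (the $h$-representation inequality), the identity $\TTT\varphi_{1,x}(x)-\alpha[0]=0$ from \ref{vanish}, and the semi-group relations $\ttt\varphi_x+\alpha[0]=\varphi_{1,x}$ (\ref{egalitetotale}) and $\ttt\varphi_{n,x}+\alpha[0]=\varphi_{n+1,x}$ (\ref{semi}). Iterating the latter gives the key computation $\T{n}\varphi_{1,x}+n\alpha[0]=\varphi_{n+1,x}$ and $\T{n}\varphi_x+n\alpha[0]=\varphi_{n,x}$, both of which converge pointwise to $h_x$ by definition of the Peierls barrier.

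\textbf{Step 1: $\TT{m}\varphi_{1,x}(x)-m\alpha[0]=0$ for $m\geqslant 1$.} For any critical sub-solution $u$ we have $u\geqslant \TTT u-\alpha[0]$; applying $\TTT$ and using its monotonicity inductively yields that the sequence $\TT{m}u-m\alpha[0]$ is non-increasing in~$m$. Taking $u=\varphi_{1,x}$ and evaluating at $x$, this combined with \ref{vanish} gives the upper bound $\TT{m}\varphi_{1,x}(x)-m\alpha[0]\leqslant \TTT\varphi_{1,x}(x)-\alpha[0]=0$. For the lower bound, apply Theorem~\ref{inegh} to $u=\varphi_{1,x}$ at $(x,x)$:
$$h(x,x)\geqslant \T{n}\varphi_{1,x}(x)-\TT{m}\varphi_{1,x}(x)+(n+m)\alpha[0]=\varphi_{n+1,x}(x)-\bigl(\TT{m}\varphi_{1,x}(x)-m\alpha[0]\bigr).$$
Letting $n\to +\infty$, since $\varphi_{n+1,x}(x)\to h(x,x)$, we obtain $\TT{m}\varphi_{1,x}(x)-m\alpha[0]\geqslant 0$.

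\textbf{Step 2: $\TT{m}\varphi_{x}(x)-m\alpha[0]=0$ for $m\in\mathbb{N}$.} The case $m=0$ is just $\varphi_x(x)=0$. For $m\geqslant 1$, Proposition~\ref{identification} ensures $\varphi_x\leqslant\varphi_{1,x}$ on $X$ (they coincide off the diagonal, and at $x$ one has $\varphi_{1,x}(x)=\ttt\varphi_x(x)+\alpha[0]\geqslant\varphi_x(x)=0$). By monotonicity of $\TT{m}$ and Step~1, $\TT{m}\varphi_x(x)-m\alpha[0]\leqslant\TT{m}\varphi_{1,x}(x)-m\alpha[0]=0$. The reverse inequality is obtained exactly as in Step~1 by applying Theorem~\ref{inegh} to $u=\varphi_x$ at $(x,x)$ and using $\T{n}\varphi_x(x)+n\alpha[0]=\varphi_{n,x}(x)\to h(x,x)$.

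\textbf{Step 3: the two limit identities.} Both are now formal: for the first, $\TTT\varphi_{1,x}(x)-\alpha[0]=0$ reduces the left-hand side to $\T{n}\varphi_{1,x}(y)+n\alpha[0]=\varphi_{n+1,x}(y)$, which tends to $h(x,y)$ by definition of~$h$. For the second, $\varphi_x(x)=0$ reduces it to $\T{n}\varphi_x(y)+n\alpha[0]=\varphi_{n,x}(y)\to h(x,y)$.

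The only nontrivial piece is the lower bound in Steps~1 and~2: one must combine the monotonicity of $\TT{m}-m\alpha[0]$ on sub-solutions (for the upper bound) with the sharp inequality of Theorem~\ref{inegh} and pass to the limit in~$n$ (for the lower bound). Once the explicit formulas $\T{n}\varphi_{1,x}+n\alpha[0]=\varphi_{n+1,x}$ and $\T{n}\varphi_x+n\alpha[0]=\varphi_{n,x}$ are in hand, everything is essentially bookkeeping.
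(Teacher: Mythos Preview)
Your proof is correct and follows essentially the same approach as the paper's: both combine the monotonicity of $\TT{m}-m\alpha[0]$ on critical sub-solutions (giving the upper bound $\leqslant 0$ via \ref{vanish}) with Theorem~\ref{inegh} and the convergence $\T{n}\varphi_{1,x}+n\alpha[0]=\varphi_{n+1,x}\to h_x$ (giving the lower bound). The only cosmetic difference is the order of deductions: the paper first establishes the limit identity at a general $y$ and reads off $\TT{m}\varphi_{1,x}(x)-m\alpha[0]=0$ from it, whereas you prove the value at $x$ directly (taking $y=x$ in \ref{inegh}) and then derive the limits; your organization is arguably cleaner.
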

\begin{proof}
 Using \ref{phi1}, and the fact that $\varphi_{1,x}$ is a critical sub-solution, we get the following generalization of \ref{phi2}:
$$\forall m\in\mathbb{N}^*,\lim_{n\to +\infty}\T{n}\varphi_{1,x}(y)-\TT{m} \varphi_{1,x}(x)+(m+n+1)\alpha[0]\geqslant h(x,y).$$
Once again, this inequality is in fact an equality (by \ref{inegh}).
Now using again the fact that the sequence of functions 
$$\T{n}\varphi_{1,x}+n\alpha[0]=\varphi_{n+1,x}$$ converge to $h_x$ we obtain that $\TT{m} \varphi_{1,x}(x)-m\alpha[0]= 0$.

To prove the second point, notice that by \ref{egalitetotale} and $\varphi_{1,x}\geqslant \varphi_x$ we get that for all $m>0$ and $n\in\mathbb{N}$,
$$
\T{n}\varphi_{x}(y)-\TT{m} \varphi_{x}(x)+\alpha[0] \geqslant \varphi_{n-1,x}(y)-\TT{m} \varphi_{1,x}(x).
$$
Therefore we have
$$\lim_{n\to +\infty}\T{n}\varphi_{x}(y)-\TT{m} \varphi_{x}(x)+(m+n+1)\alpha[0] \geqslant h(x,y).$$
By \ref{inegh}, these inequalities are in fact equalities which implies that  for all integer $m$ we have $\TT{m} \varphi_{x}(x)-m\alpha[0]= 0$.
\end{proof}
We are now able to give the proof of \ref{A}:
\begin{proof}[Proof of \ref{A}]
  We know that if $u$ is a critically dominated function and $(x_n)_{n\in \mathbb{Z}}$ is a calibrated sequence for $u$, then for all $n\in \mathbb{N}$, we have (\ref{IIII}) 
$$\T{n}u(x_0)+n\alpha [0]=\TT{n}u(x_0)-n\alpha [0]=u(x_0).$$
Therefore if $h$ is identically $+\infty$, then there are no calibrated bi-infinite chains for the critically dominated function $\varphi_{1,x}$ where $x$ is any point of $X$ (the sequence $\T{n}\varphi_{1,x}(x_0)+n\alpha [0]$ goes to $+\infty$ and therefore may not be constant) which proves that in this case, $\AA=\varnothing$ and at the same time that  $\A=\varnothing$.

When $h$ is finite, by \ref{Peierl} and \ref{IIII}, $h(x,x)=0$ if and only if for any critically dominated function $u$, the sequences 
$$\T{n}u(x)+n\alpha[0]\; \textrm{and}\; \TT{m}u(x)-m\alpha[0]$$
 are constantly equal to $u(x)$. Assume now that $u$ is the function given by \ref{I}. Applying, \ref{IIII} we obtain that $x\in \A_u=\A$.
\end{proof}
Let us now point out a phenomenon that is of some resemblance
with  paired weak KAM solutions in the compact case
(\cite{Fa}).
\begin{pr}
 Assume that $h$ is finite. Let $u$ be a critically dominated
 function. Let $u_-$ be the limit of the sequence of functions
 $\T{n} u+n\alpha [0]$ (it is a negative weak KAM solution). Let
 $u_{-+}$ be the limit of the sequence of functions $\TT{n} u_-
 -n\alpha [0]$ which is a positive weak KAM solution. Then, again let
 $u_{-+-}$ be the limit of the $\T{n}u_{-+} +n\alpha [0]$ and $u_{-+-+}$ be the limit
 of the $\TT{n} u_{-+-} -n\alpha [0]$.\\ Then,
 $u_{-+}=u_{-+-+}$.
\end{pr}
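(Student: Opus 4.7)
My plan is to set $v = u_{-+}$ and prove directly the equality $v = (v_-)_+$, since by definition $(v_-)_+ = u_{-+-+}$. The key ingredient is the pair of variational characterizations from the preceding theorem: for any critically dominated function $u$, $u_-$ is the smallest negative weak KAM solution above $u$, and $u_+$ is the largest positive weak KAM solution below $u$.

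For the inequality $(v_-)_+ \leqslant v$, the decisive observation is that $v = (u_-)_+ \leqslant u_-$. Since $u_-$ is a negative weak KAM solution with $u_- \geqslant v$, the infimum characterization applied to the critically dominated function $v$ gives $v_- \leqslant u_-$. The supremum characterization of $(v_-)_+$ then yields
\begin{equation*}
(v_-)_+ = \sup\{w_+ \leqslant v_-\} \leqslant \sup\{w_+ \leqslant u_-\} = (u_-)_+ = v,
\end{equation*}
where the suprema range over positive weak KAM solutions.

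For the reverse inequality $v \leqslant (v_-)_+$, I use that $v$ is critically dominated (being a weak KAM solution), hence the non-decreasing sequence $\T{n} v + n\alpha[0]$ satisfies $v \leqslant v_-$. Since $v$ is a fixed point of $\TTT - \alpha[0]$, one has $\TT{n} v - n\alpha[0] = v$ for every $n$, so the monotonicity of $\TT{n}$ combined with $v \leqslant v_-$ gives
\begin{equation*}
v = \TT{n} v - n\alpha[0] \leqslant \TT{n} v_- - n\alpha[0].
\end{equation*}
Letting $n \to +\infty$ yields $v \leqslant (v_-)_+$, which combined with the previous inequality proves $v = (v_-)_+$. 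The argument is essentially mechanical once the observation $v \leqslant u_-$ is in hand, so I do not anticipate a genuine obstacle; the only points to keep in mind are that the sequences defining $v_-$ and $(v_-)_+$ converge because $h$ is finite by assumption, and that the inf/sup characterizations apply to $v$ and $v_-$ since both are critically dominated.
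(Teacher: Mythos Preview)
Your proof is correct and follows essentially the same approach as the paper's. Both arguments hinge on the observation $u_{-+}\leqslant u_-$ to obtain $u_{-+-}\leqslant u_-$ via the infimum characterization, and both exploit that $u_{-+}$ is a positive weak KAM solution lying below $u_{-+-}$; the only cosmetic difference is that for the inequality $u_{-+}\leqslant u_{-+-+}$ the paper invokes the supremum characterization directly (since $u_{-+}$ is one of the competitors $w_+\leqslant u_{-+-}$), whereas you reach the same conclusion through monotonicity of $\TTT$ applied to $u_{-+}\leqslant u_{-+-}$.
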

\begin{proof}
 We have seen that
\begin{eqnarray*}
u_-&=&\inf_{w_-\geqslant u}w_-\\
u_{-+}&=&\sup_{w_+\leqslant u_-}w_+\\
u_{-+-}&=&\inf_{w_-\geqslant u_{-+}}w_-\\
u_{-+-+}&=&\sup_{w_+\leqslant u_{-+-}}w_+,
\end{eqnarray*}
where $w_-$ and $w_+$ denote each time respectively negative and
positive weak KAM solutions.  Obviously, since $u_{-+}\leqslant u_{-+-}$,
 by the above formula $u_{-+}\leqslant u_{-+-+}$. We also have
$u_-\geqslant u_{-+-}$. Therefore, by monotonicity of the Lax-Oleinik
semi-group we obtain $u_{-+}\geqslant u_{-+-+}$ which gives the desired equality.
\end{proof}
\begin{rem}\rm
In other words, the operation which sends a sub-solution $u$ to the weak KAM solution $u_{-+}$ is idempotent. This is comparable to the result we obtained in \ref{LO}.\\
 The assumption that the Peierls barrier is finite is rather
 strong in the non compact case. To ensure that the sequence $\T{n} u+n\alpha [0]$
 (resp. $\TT{n} u-n\alpha [0]$) converges, it is enough to suppose
 that there is a negative (resp. positive) weak KAM solution that is
 greater (resp. smaller) than $u$.
\end{rem}
We conclude by showing that the function $\varphi$ may help solving the question of the finiteness of the Peierls barrier $h$.
\begin{pr}
 The following statements are equivalent:
\begin{enumerate}
 \item the Peierls barrier is finite,
\item there is an $(x,y)\in X^2$ such that the sequence $\T{n}
  \varphi_x(y)+n\alpha[0]$ is bounded,
\item there is an $x\in X$ such that the sequence $\T{n}
  \varphi_x+n\alpha[0]$ is point-wise bounded,
\item for every $x\in X$, the sequence $\T{n} \varphi_x+n\alpha[0]$ is
  point-wise bounded,
\item for all $u$ critically dominated, the sequences $(\T{n}u+n\alpha[0])_{n\in \mathbb{N}}$ and $(\TT{n}u-n\alpha[0])_{n\in \mathbb{N}}$ converge uniformly on compact sets to respectively a negative weak KAM solution and a positive weak KAM solution.
\end{enumerate}
\end{pr}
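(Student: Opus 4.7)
The plan is to prove the chain of implications $1 \Rightarrow 5 \Rightarrow 4 \Rightarrow 3 \Rightarrow 2 \Rightarrow 1$. Almost every link is immediate from earlier material, so only $2 \Rightarrow 1$ requires any real (though short) argument.

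For $1 \Rightarrow 5$, the work has essentially already been done in the paragraph following Theorem \ref{inegh}. If $u \prec c + \alpha[0]$, then Theorem \ref{inegh} applied with $m=0$ and an arbitrary $x_0 \in X$ yields
$$\T{n}u(y)+n\alpha[0]\leqslant h(x_0,y)+u(x_0),$$
so the monotone non-decreasing sequence $\T{n}u+n\alpha[0]$ is pointwise bounded above, hence converges. All its terms are critical sub-solutions, so the equi-continuity statement \ref{HH} upgrades the pointwise convergence to uniform convergence on compact subsets; continuity of $\ttt$ for the compact-open topology (again \ref{HH}) then identifies the limit as a negative weak KAM solution. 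The argument for $\TT{n}u-n\alpha[0]$ is symmetric.

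The implication $5 \Rightarrow 4$ is obtained by specialising $u=\varphi_x$, which is a critical sub-solution by Proposition \ref{previous}: uniform convergence on compact sets certainly implies pointwise boundedness. The implications $4 \Rightarrow 3 \Rightarrow 2$ are tautological.

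The main step is $2 \Rightarrow 1$, and this is where I expect the only (minor) obstacle, namely correctly invoking the right identity. Suppose $(\T{n}\varphi_x(y)+n\alpha[0])_n$ is bounded at some $(x,y)\in X^2$. Since $\varphi_x$ is critically dominated, this sequence is non-decreasing, hence convergent. Corollary \ref{0}, together with $\varphi(x,x)=0$, identifies its limit as $h(x,y)$; thus $h(x,y)<+\infty$. The dichotomy recorded in Proposition \ref{hh} — that $h$ is either finite everywhere or identically $+\infty$ — then forces $h$ to be finite on all of $X^2$, closing the loop. The entire proposition is therefore a bookkeeping exercise around Corollary \ref{0}, the monotonicity of $\T{n}u+n\alpha[0]$ along critical sub-solutions, and the dichotomy of the Peierls barrier.
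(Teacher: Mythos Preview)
Your argument is correct and matches the paper's approach. The paper's proof is even terser: it records the identity $\T{n}\varphi_x+n\alpha[0]=\varphi_{n,x}$ directly from Propositions \ref{egalitetotale} and \ref{semi} (rather than via Corollary \ref{0}), observes that this sequence increases to $h_x$ by definition of $h$, and invokes the finite/everywhere-$+\infty$ dichotomy; statement 5 is then read off from Theorem \ref{inegh} exactly as you do. Your cycle $1\Rightarrow 5\Rightarrow 4\Rightarrow 3\Rightarrow 2\Rightarrow 1$ just makes the implicit logical structure explicit.
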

\begin{proof}
It suffices to notice that by (\ref{egalitetotale}) we have $\varphi_{1,x}=\ttt \varphi_x +\alpha[0]$ for all $x\in X$. Hence applying (\ref{semi}) we obtain
 $$\T{n} \varphi_x+n\alpha[0]=\varphi_{n-1,x}.$$ 
 Therefore, this sequence of functions converges uniformly on all compact sets to $h_x$ which is either everywhere finite or everywhere $+\infty$.
The last point is a direct consequence of \ref{inegh}.
 \end{proof}

\appendix

\section{Appendix: Existence of weak KAM solutions}
What comes in the following section is mostly adapted from
\cite{FaMa}. Let us consider a metric space $X$ such that its closed
balls are compact and, which verifies the following: 
\begin{df}\label{scale}\rm
Given constants $K\in \R{}$, $B\geqslant 1$ we will say the metric space $X$ is a $B$-length space at scale $K$ if for every
$(x,y)\in X^2$, there exist $\left(x=x_0,\ldots ,x_n=y\right)\in X^{n+1}$ such that for all $i\leqslant n-1$, $\d (x_i,x_{i+1})\leqslant K$ and,
$\sum_{0\leqslant i\leqslant n-1} \d(x_i,x_{i+1})\leqslant B\d (x,y)$ where $\d$
denotes the distance function.
\end{df}

We start with a simple but fundamental lemma:

\begin{lm}\label{number}
 If $X$ is a  $B$-length space at scale $K$ then for every $(x,y)\in X^2$, there exist $\left(x=x_0,\ldots ,x_n=y\right)\in X^{n+1}$ such that for all $i\leqslant n-1$, $\d (x_i,x_{i+1})\leqslant K$ and,
$\sum_{0\leqslant i\leqslant n-1} \d(x_i,x_{i+1})\leqslant B\d (x,y)$ and 
$$n\leqslant \frac{2B\d(x,y)}{K}+1.$$
\end{lm}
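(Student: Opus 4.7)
The plan is to start from any chain $\left(x=x_0,\ldots,x_m=y\right)$ given by the $B$-length-space property (so the first two conclusions already hold for it) and then \emph{prune} this chain until it has no pair of consecutive edges that are too short. The bound on $n$ will then fall out by a two-edges-at-a-time counting argument.

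More precisely, I would proceed as follows. First, invoke the definition to get a chain $x_0=x,x_1,\ldots,x_m=y$ satisfying $\d(x_i,x_{i+1})\leqslant K$ for every $i$ and $\sum_i\d(x_i,x_{i+1})\leqslant B\d(x,y)$. Then run the following reduction: while there exists an index $i$ such that
$$\d(x_i,x_{i+1})+\d(x_{i+1},x_{i+2})\leqslant K,$$
delete the intermediate point $x_{i+1}$ and reindex. The triangle inequality ensures that the new edge $x_i\to x_{i+2}$ still satisfies $\d(x_i,x_{i+2})\leqslant K$, and since $\d(x_i,x_{i+2})\leqslant\d(x_i,x_{i+1})+\d(x_{i+1},x_{i+2})$ the total length of the chain does not increase and hence remains $\leqslant B\d(x,y)$. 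The procedure decreases the number of points at each step and therefore terminates with a chain $x_0=x,x_1,\ldots,x_n=y$ still satisfying the first two conclusions and with the extra property that for every $i\in\{0,\ldots,n-2\}$,
$$\d(x_i,x_{i+1})+\d(x_{i+1},x_{i+2})>K.$$

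It remains to bound $n$. I would group the $n$ edges in consecutive pairs. If $n=2p$ is even, partition the edges into $p$ disjoint pairs $(j_{2k},j_{2k+1})$ for $0\leqslant k\leqslant p-1$, each of combined length strictly greater than $K$, hence
$$pK<\sum_{i=0}^{n-1}\d(x_i,x_{i+1})\leqslant B\d(x,y),$$
which gives $n=2p<2B\d(x,y)/K$. If $n=2p+1$ is odd, the same pairing on the first $2p$ edges gives $pK<B\d(x,y)$, that is $p<B\d(x,y)/K$, so $n=2p+1<2B\d(x,y)/K+1$. In either case, $n\leqslant 2B\d(x,y)/K+1$, which is the required estimate.

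The argument is essentially combinatorial and presents no real obstacle; the only point that requires minor care is checking that each pruning step preserves \emph{both} the bound $\d\leqslant K$ on each remaining edge \emph{and} the total length bound $B\d(x,y)$, so that the reduction can be iterated freely without losing the first two properties of the statement.
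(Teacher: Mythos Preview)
Your proof is correct and follows essentially the same approach as the paper: the paper simply takes a chain of \emph{minimal} length $n$ satisfying the definition (which is equivalent to the outcome of your pruning procedure), observes that minimality forces $\d(x_i,x_{i+1})+\d(x_{i+1},x_{i+2})\geqslant K$ for all $i\leqslant n-2$, and then concludes via the same two-edges-at-a-time pairing.
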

\begin{proof}
 Let us take a chain $(x=x_0,\ldots ,x_n=y)$ verifying the hypothesis of \ref{scale} and such that $n$ is minimal. Necessarily, 
$$\forall i\leqslant n-2, \d(x_i,x_{i+1})+\d(x_{i+1},x_{i+2})\geqslant K,$$
for otherwise, the same sequence without $x_{i+1}$ would itself verify the hypothesis of \ref{scale}.

Therefore, if we call $m=\lfloor n/2\rfloor$ then $n\leqslant 2m+1$ and $mK\leqslant B\d(x,y)$.
\end{proof}

\begin{ex}\rm
 \begin{enumerate}
  \item A metric compact space $C$ is a 1-length space at scale $\diam (C)$,
\item a length space is a 1-length space at scale $K$ for every $K>0$,
\item a graph endowed with its graph metric is a 1-length space at scale 1,
\item a grid $G_{\e}=\e\mathbb{Z}^n\subset \mathbb{R}^n$ endowed with the metric induced by the inclusion in $\R{n}$ is a $\sqrt{n}$-length space at scale $\e$,
\item if a metric space, $(X,d)$, whose closed balls are compact is a $B$-length space at scale $K$ for every $K>0$ then it is Lipschitz equivalent to a length space,
\item the set $\mathcal{P}$ of prime numbers endowed with the distance $\d(p,p')=|p-p'|$ is not a length space at any scale.
 \end{enumerate}
\end{ex}
\begin{proof}
 Items 1, 2,3,4 and 6 are clear. The proof of 5 uses standard ideas in topology and in the study of length spaces (see for example \cite{gro}, Theorem 1.8). Let $(x,y)\in X^2$ be two distinct points. We want to construct a continuous curve from $x$ to $y$ which metric length is less than $B\d(x,y)$. Applying that $X$ is a $B$-length space at scale $1/n$ we find for any $n\geqslant 1$ a sequence of points
 $\left(x=x_0^n,\ldots ,x_{N_n}^n=y\right)\in X^{N_n+1}$ such that for all $i\leqslant N_n-1$ we have $\d (x_i^n,x_{i+1}^n)\leqslant 1/n$ and,
\begin{equation}\label{bor}
\sum_{0\leqslant i\leqslant N_n-1} \d(x_i^n,x_{i+1}^n)\leqslant B\d (x,y).
\end{equation}
Moreover, it is clear that the sequence $N_n$ goes to $+\infty$ and by \ref{number},  we can assume that for $n$ large enough, the following holds: 
\begin{equation}\label{bor1}
\forall n\in \mathbb{N}^*, N_n\leqslant  2nB\d(x,y)+1\leqslant 3nB\d(x,y).
\end{equation}
Clearly, we also have:
\begin{equation}\label{borne}
\forall n\in \mathbb{N}^*,\forall i\leqslant N_n, \d(x,x_i^n)\leqslant B\d(x,y).
\end{equation}

 We define for any integer $n$ and $i\leqslant N_n$, $f_n(i/N_n)=x_i^n$.
 For any integer $n$ large enough and any $i,j\leqslant N_n$, the following holds :
 \begin{equation}\label{lip}
 \d(f_n(i/N_n),f_n(j/N_n))\leqslant \frac{|j-i|}{n}\leqslant 3B\d(x,y)\frac{|j-i|}{N_n}. 
 \end{equation}
  Let $(q_k),k\in \mathbb{N}$ be a dense sequence in $[0,1]$. For any $k\in \mathbb{N}$ let us choose a sequence $(a_n^k=i_n^k/N_n),n\in \mathbb{N}$ which converges to $q_k$, where $i_n^k$ is always smaller than $N_n$. Up to doing a diagonal extraction, using \ref{borne}, we can assume that all the sequences $(f_n(a_n^k),n\in \mathbb{N})$ converge to an element $x_k$ of $X$. Let us define 
  $$\forall k\in \mathbb{N}, f(q_k)=x_k.$$
  By \ref{lip}, we have for $n$ large enough, 
  $$\d(f_n(a_n^k),f_n(a_n^{k'}))\leqslant 3B\d(x,y)|a_n^k-a_n^{k'}|,$$
  therefore, letting $n$ go to $+\infty$ we obtain
  $$\forall (k,k')\in \mathbb{N}^2, \d(f(q_k),f(q_{k'}))\leqslant 3B\d(x,y)|q_k-q_{k'}|.$$
  Since $(q_k)_{k\in\mathbb{N}}$ is dense in $[0,1]$, $X$ is complete and by the previous inequalities $f$ is uniformly continuous (it is in fact Lipschitz), we can extend it to a continuous function, that we will still call $f$, from $[0,1]$ to $X$. Finally, by \ref{bor}, the length of $f$ is smaller than $B\d(x,y)$.

Let us now denote $\d_l$ the distance on $X$ induced by its metric length structure. More precisely, if $x,y$ are two points, $\d_l(x,y)$ is nothing but the infimum of the length of a path joining $x$ to $y$ over all such paths (see \cite{gro} (p. 2 and 3) for a more precise definition). By the above construction, the space $(X,\d_l)$ is a length space and the application identity from $(X,\d_l)$ to $(X,\d)$ is $B$-Lipschitz. Moreover, by definition of $\d_l$, it's inverse from $(X,\d)$ to $(X,\d_l)$ is 1-Lipschitz.
\end{proof}
 A complete, connected
Riemannian manifold is a 1-length space at scale $K$ for all $K>0$ so this property will clearly
hold. Assume from now on that $X$ is a $B$-length space at scale $K$ for some $(B,K)$.

  Let
$c: X\times X \rightarrow \R{}$ be a continuous function which
verifies the conditions of uniform super-linearity (\ref{unif}) and  uniform boundedness (\ref{unifb}) stated in the introduction.
We recall that a function $u:X\rightarrow \R{}$ is an
$\alpha$-sub-solution or that it is dominated by $c+\alpha$ (in short
$u\<c+\alpha$) if for every $(x,y)\in X^2$ we have $u(x)-u(y)\leqslant
c(y,x)+\alpha$ (see \ref{sub} in the introduction). We will denote by $\HH(\alpha)$ the set of such
functions.
\\
 Finally, let us state the definitions of the well known Lax-Oleinik
semi-groups:
\\
 for a function $u:X\rightarrow \overline{\R{}}$ we
define the function
$$\ttt u:X\rightarrow \overline{\R{}}\ \ \mathrm{
by}\ \ 
\ttt u(x)=\inf_{y\in X}\left\{ u(y)+c(y,x)\right\},$$
$$\TTT u:X\rightarrow \overline{\R{}}\ \ \mathrm{
by}\ \ 
\TTT u(x)=\sup_{y\in X}\left\{ u(y)-c(x,y)\right\}.$$
The following lemma is not difficult to check.
\begin{lm}\label{monotonicity}
 If $k\in \R{}$ and $u:X\rightarrow \overline{\R{}}$ then $\ttt
 (u+k)=k+\ttt u$ that is, the Lax-Oleinik semi-group commutes with the addition of constants.  Moreover, if $v:X\rightarrow \overline{\R{}}$ is
 another function such that $u\leqslant v$ then $\ttt u\leqslant \ttt
 v$, in other words, the semi-group is monotonous.
\end{lm}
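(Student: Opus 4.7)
The plan is to unfold the definition of $\ttt$ directly in both cases; this is purely formal and the only subtle point is handling the possibility that $u$ takes values in $\overline{\mathbb{R}}$, so that $+\infty$ and $-\infty$ may appear in the infimum.

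For the commutation with constants, I would fix $x \in X$ and write
$$\ttt(u+k)(x) = \inf_{y \in X}\bigl\{(u(y)+k) + c(y,x)\bigr\} = \inf_{y \in X}\bigl\{u(y) + c(y,x)\bigr\} + k = \ttt u(x) + k,$$
where the middle equality is the elementary fact that adding a constant to every element of a family of extended real numbers shifts the infimum by the same constant. The only case requiring a brief comment is when $\ttt u(x) = -\infty$ or $+\infty$: in both cases, adding the finite constant $k$ commutes with the infimum without ambiguity.

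For monotonicity, I would again fix $x \in X$. From the pointwise hypothesis $u \leqslant v$, for every $y \in X$ we have $u(y) + c(y,x) \leqslant v(y) + c(y,x)$ (this inequality holds in $\overline{\mathbb{R}}$ because $c(y,x)$ is a real number, so no indeterminate form $\infty - \infty$ can arise). Taking the infimum over $y$ on both sides preserves the inequality, giving
$$\ttt u(x) = \inf_{y \in X}\bigl\{u(y)+c(y,x)\bigr\} \leqslant \inf_{y \in X}\bigl\{v(y)+c(y,x)\bigr\} = \ttt v(x).$$
Since $x$ was arbitrary, this yields $\ttt u \leqslant \ttt v$.

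There is essentially no obstacle here; the statement is a direct consequence of the definition of $\ttt$ and the elementary behavior of the infimum operation on $\overline{\mathbb{R}}$, which explains why the paper's author asserts the lemma ``is not difficult to check.'' The only care needed is the bookkeeping for extended-real values, which is handled automatically by the fact that $c$ is finite-valued.
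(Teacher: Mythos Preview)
Your proof is correct and is precisely the routine verification the paper has in mind; the author omits any argument, simply stating the lemma ``is not difficult to check,'' and your direct unfolding of the definition of $\ttt$ is exactly that check.
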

\begin{df}
 Let $(k,b)\in \R{2}$, we will say that $f:X\rightarrow \R{}$ is $(k,b)$-Lipschitz in the large or $f\in \mathrm{Lip}_{(k,b)}(X,\R{})$  if
$$\forall (x,y)\in X^2, |f(x)-f(y)|\leqslant k\d(x,y)+b.$$
\end{df}
\begin{ex}
Bounded functions are Lipschitz in the large.

 Uniformly continuous functions on a length space are Lipschitz in the large.
\end{ex}
Although functions Lipschitz in the large are not necessarily continuous, obviously they satisfy the following lemma:
\begin{lm}\label{bounded}
 A function Lipschitz in the large is bounded on any ball of finite radius.
\end{lm}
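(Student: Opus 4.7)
The plan is essentially to unwind the definition: we pick a basepoint inside the ball and control the values of $f$ on the whole ball by comparing with this basepoint.

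More precisely, fix any ball $B(x_0, R)$ of finite radius $R$ in $X$. For every $y \in B(x_0, R)$, by the defining inequality of being $(k,b)$-Lipschitz in the large, we have
$$|f(y) - f(x_0)| \leqslant k\d(x_0, y) + b \leqslant kR + b.$$
Applying the triangle inequality for the absolute value yields
$$|f(y)| \leqslant |f(x_0)| + kR + b,$$
which is a constant (depending only on $x_0$, $R$, $k$, and $b$, but not on $y$). Hence $f$ is bounded on $B(x_0, R)$, proving the lemma.

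There is no real obstacle here; the statement is an immediate consequence of the definition combined with the fact that the distance from $x_0$ to any point of the ball is at most $R$. The only mild subtlety is that $f$ need not be continuous, but the Lipschitz-in-the-large inequality is quantitative and holds for all pairs $(x,y)$, so continuity is not needed.
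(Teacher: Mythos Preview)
Your proof is correct and is exactly the one-line argument the paper has in mind; the paper in fact omits the proof entirely, merely stating that the lemma ``obviously'' holds.
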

These functions give a nice setting to apply the Lax-Oleinik semi-groups as shown in the following propositions:
\begin{pr}
The following properties hold:
 \begin{enumerate}
  \item If $k\in \R{}$ and $u:X\rightarrow \R{}$ then $u\in \HH(\alpha)$ if
    and only if $u+k\in \HH(\alpha)$.
 \item If $u:X\rightarrow \R{}$ is $(k,b)$-Lipschitz in the large then $u\in \HH(C(k)+b)$.
\item The subset $\HH(\alpha)$ is convex and closed in the space $\mathcal{F}(X,\R{})$ of finite real valued functions on $X$ endowed with the topology of point-wise convergence.
  
\item If $\alpha\leqslant \alpha'$ then $\HH(\alpha) \subset \HH(\alpha')$.
\item If $\HH(\alpha)\neq \varnothing$ then $\alpha\geqslant \sup \{-c(x,x),x\in
  X\}\geqslant -A(0)$.
 \end{enumerate}
\end{pr}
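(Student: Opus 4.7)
The plan is to verify the five items in turn, each of which follows essentially from unfolding the definition of $\HH(\alpha)$.

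For (1), the key observation is that adding the constant $k$ to $u$ cancels on both sides of the domination inequality $u(y)-u(x)\leqslant c(x,y)+\alpha$, so the conditions for $u$ and for $u+k$ are literally identical. For (2), I would combine the Lipschitz-in-the-large bound $u(y)-u(x)\leqslant k\d(x,y)+b$ with uniform super-linearity applied at level $k$, namely $c(x,y)\geqslant k\d(x,y)-C(k)$, to obtain $u(y)-u(x)\leqslant c(x,y)+C(k)+b$, which is exactly $u\<c+(C(k)+b)$.

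For (3), convexity comes from the fact that the inequality defining $\HH(\alpha)$ is preserved under convex combinations: if $u,v\in\HH(\alpha)$ and $t\in[0,1]$, writing $(tu+(1-t)v)(y)-(tu+(1-t)v)(x)$ as $t(u(y)-u(x))+(1-t)(v(y)-v(x))$ and bounding each summand yields the required bound. For closedness, I would observe that for each fixed $(x,y)\in X^2$ the evaluation map $u\mapsto u(y)-u(x)$ is continuous in the product topology of pointwise convergence, so $\{u:u(y)-u(x)\leqslant c(x,y)+\alpha\}$ is closed; and $\HH(\alpha)$ is the intersection of all these closed sets over $(x,y)\in X^2$.

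Item (4) is immediate: if $u\<c+\alpha$ and $\alpha\leqslant\alpha'$, then $u(y)-u(x)\leqslant c(x,y)+\alpha\leqslant c(x,y)+\alpha'$. For (5), the plan is to set $x=y$ in the domination inequality: this gives $0=u(x)-u(x)\leqslant c(x,x)+\alpha$, whence $\alpha\geqslant -c(x,x)$; taking the supremum over $x\in X$ yields the first inequality. The second, $\sup\{-c(x,x),x\in X\}\geqslant -A(0)$, follows from uniform boundedness (\ref{unifb}) applied at $R=0$, which gives $c(x,x)\leqslant A(0)$ for every $x$, hence $-c(x,x)\geqslant -A(0)$. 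None of these steps are expected to be difficult; the only minor point requiring care is the verification that closedness in the pointwise topology is really an intersection of countably-many-irrelevant closed sets, but since closedness is preserved under arbitrary intersections this needs no extra argument.
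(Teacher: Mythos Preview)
Your proof is correct and follows essentially the same approach as the paper: items (1), (4) are immediate from the definition, (2) combines the Lipschitz-in-the-large bound with super-linearity at level $k$, (3) is the observation that $\HH(\alpha)$ is an intersection of closed half-spaces (one per pair $(x,y)$), and (5) comes from taking $x=y$ in the domination inequality together with uniform boundedness at $R=0$. Your write-up is slightly more explicit (you spell out convexity and the second inequality of (5) separately), but there is no substantive difference.
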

\begin{proof}
 Statements (1) and (4) are direct consequences of the definitions.
If $u\in \mathrm{Lip}_{(k,b)}(X,\R{})$ then 
$$\forall (x,y)\in X^2,u(x)-u(y)\leqslant k\d(x,y)+b\leqslant
c(y,x)+C(k)+b,$$
 which proves statement (2).  

To prove statement (3),
just notice that $\HH(\alpha)$ is an intersection of closed half spaces
for the given topology, one for each couple of points of
$X$.

 As for statement (5), observe that if $u\in \HH(\alpha)$ and $x\in
X$ then
$$0=u(x)-u(x)\leqslant c(x,x)+\alpha,$$
 which implies (5).
\end{proof}
In the following, we will need this lemma:
\begin{lm}\label{lip}
 Let $\alpha\in \R{}$, then there exists constants $k(\alpha)$ and $b(\alpha)$ such that for any  $u$ which is $\alpha$-dominated,  then $u$ is Lipschitz in the large with constants $k(\alpha)$ and $b(\alpha)$.
\end{lm}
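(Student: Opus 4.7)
The plan is to exploit the two structural hypotheses directly: the $B$-length property at scale $K$ lets us connect any two points by a short chain with controlled number of steps, while the uniform boundedness of $c$ bounds $c$ on each individual step. Combining these with the telescoping $\alpha$-domination inequality will give the desired Lipschitz-in-the-large estimate.

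Concretely, fix $u\<c+\alpha$ and $(x,y)\in X^2$. I would first invoke lemma \ref{number} to obtain a chain $x=x_0,x_1,\ldots,x_n=y$ with $\d(x_i,x_{i+1})\leqslant K$ for every $i$, and with
\[
n\leqslant \frac{2B\d(x,y)}{K}+1.
\]
Then, since each step has length at most $K$, the uniform boundedness assumption \ref{unifb} gives $c(x_i,x_{i+1})\leqslant A(K)$ for every $i$. Telescoping the domination inequality yields
\[
u(y)-u(x)=\sum_{i=0}^{n-1}\bigl(u(x_{i+1})-u(x_i)\bigr)\leqslant \sum_{i=0}^{n-1}\bigl(c(x_i,x_{i+1})+\alpha\bigr)\leqslant n\bigl(A(K)+\alpha\bigr).
\]

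Next, I would handle the sign of $A(K)+\alpha$. If $\HH(\alpha)$ is nonempty and $X$ contains two distinct points, then applying the above to any distinct $x,y$ (hence $n\geqslant 1$) together with the symmetric inequality $u(x)-u(y)\leqslant n(A(K)+\alpha)$ forces $A(K)+\alpha\geqslant 0$; otherwise the lemma is vacuous or trivial. Substituting the bound on $n$ into the telescoped inequality produces
\[
u(y)-u(x)\leqslant \frac{2B\bigl(A(K)+\alpha\bigr)}{K}\,\d(x,y)+\bigl(A(K)+\alpha\bigr),
\]
and by swapping $x$ and $y$ we obtain the same bound for $u(x)-u(y)$. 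Setting $k(\alpha):=2B\bigl(A(K)+\alpha\bigr)/K$ and $b(\alpha):=A(K)+\alpha$, both depending only on $\alpha$ (and on the ambient constants $B$, $K$, $A$), finishes the proof.

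There is no real obstacle in this argument; it is essentially a one-line computation once lemma \ref{number} is in hand. The only subtlety is the bookkeeping about the sign of $A(K)+\alpha$, which as explained above is harmless because the nontrivial case is exactly the one where this quantity is non-negative.
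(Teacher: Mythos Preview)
Your proof is correct and follows essentially the same route as the paper: invoke lemma~\ref{number} to get a chain with at most $2B\d(x,y)/K+1$ steps of length $\leqslant K$, bound each step using $A(K)$, and telescope the domination inequality to arrive at the constants $k(\alpha)=2B(A(K)+\alpha)/K$ and $b(\alpha)=A(K)+\alpha$. Your explicit handling of the sign of $A(K)+\alpha$ is a small improvement over the paper's write-up, which silently assumes this quantity is non-negative when passing from the bound on $n$ to the final inequality.
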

\begin{proof}
 Let us consider $u\in \HH (\alpha)$ and $x_0\in X$. Then one has 
$$\forall y\in X, u(x_0)-u(y)\leqslant c(y,x_0)+\alpha\leqslant
A(\d(y,x_0))+\alpha$$ where we have used first the domination of $u$ and
then the uniform boundedness of $c$. Moreover, using the assumption we
made on the metric $\d$ and \ref{number}, the constants $K$, $B$  satisfy that for any $y\in X$,
$$ u(x_0)-u(y)\leqslant (A(K)+\alpha)\left(
\frac{2B\d(x_0,y)}{K}+1\right) $$ 
which proves that $u\in \mathrm{Lip}_{2(A(K)+\alpha)B/K,A(K)+\alpha}(X,\R{})$.
\end{proof}

\begin{pr}\label{HH} The following properties are verified:
 \begin{enumerate}
  \item[(i)] Let $u:X\rightarrow \R{}$ be a function. We have $u\<c+\alpha$ if
    and only if $u\leqslant \ttt u+\alpha$.
\item[(ii)] The following holds:
$$\ttt \left(\mathrm{Lip}_{(k,b)}(X,\R{})\right) \subset \hh{C(k)+b}.$$
Moreover,  the set of functions $\ttt \left(\mathrm{Lip}_{(k,b)}(X,\R{})\right)$ is locally equi-continuous.
Finally the mapping $\ttt$ restricted to $\mathrm{Lip}_{(k,b)}(X,\R{})$ is continuous for the topology of uniform convergence on compact subsets.
\item[(iii)] The map $\ttt$ sends $\HH(\alpha)$ into $\hh{\alpha}$ and is continuous for the topology of uniform convergence on compact subsets. Moreover,  the set of functions $\ttt \left(\HH{(\alpha)}\right)$ is locally equi-continuous.
\item[(iv)] \label{atteint} If $u\in \mathrm{Lip}_{(k,b)}(X,\R{})$ is lower semi-continuous, then for every $x\in
  X$, there is a $y\in X$ such that $\ttt u(x)=u(y)+c(y,x)$.
 \end{enumerate}
\end{pr}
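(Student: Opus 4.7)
The plan is to treat the four items in order, with item (i) giving the algebraic reformulation, item (iv) providing the compactness mechanism, and items (ii)--(iii) being obtained from the same localisation argument based on super-linearity.

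For (i), I would simply rewrite the domination inequality $u(x)-u(y)\leqslant c(y,x)+\alpha$ as $u(x)\leqslant u(y)+c(y,x)+\alpha$ for every $y\in X$, take the infimum on $y$ on the right-hand side, and recognise $\ttt u(x)+\alpha$; the converse is immediate since the infimum is bounded above by any individual term.

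The heart of the proof is the following localisation lemma, which I would establish first and then use repeatedly. Fix a compact set $K\subset X$ and a base point $x_0$. If $u\in \mathrm{Lip}_{(k,b)}(X,\R{})$ and $x\in K$, then using $u(y)\geqslant u(x_0)-k\d(x_0,y)-b$ (from the Lipschitz-in-the-large bound, via \ref{bounded}) together with the super-linearity applied with constant $k'=k+1$, one gets
\[
u(y)+c(y,x)\geqslant u(x_0)-b-C(k+1)-k\d(x_0,y)+(k+1)\d(y,x).
\]
Since $\d(y,x)\geqslant \d(x_0,y)-\d(x_0,x)$ and $x\in K$, the right-hand side tends to $+\infty$ as $\d(x_0,y)\to\infty$, uniformly in $x\in K$. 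Consequently, there exists $R=R(K,k,b)$ such that for every $x\in K$,
\[
\ttt u(x)=\inf_{y\in \overline{B(x_0,R)}}u(y)+c(y,x).
\]
In particular $\ttt u(x)>-\infty$, so $\ttt u$ is finite; together with (i) and statement (2) of the previous proposition, this gives $\ttt u\in \HH(C(k)+b)$. Point (iv) follows at once: when $u$ is lower semi-continuous, the map $y\mapsto u(y)+c(y,x)$ is lower semi-continuous on the compact ball $\overline{B(x_0,R)}$ and therefore attains its infimum there.

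Equi-continuity of $\ttt(\mathrm{Lip}_{(k,b)}(X,\R{}))$ on $K$ is then obtained as follows: for $x,x'\in K$ and $y$ almost achieving the infimum for $\ttt u(x')$, one has $y\in\overline{B(x_0,R)}$, hence
\[
\ttt u(x)-\ttt u(x')\leqslant c(y,x)-c(y,x')+\varepsilon,
\]
and the uniform continuity of $c$ on the compact set $\overline{B(x_0,R)}\times K$ provides a modulus of continuity depending only on $K,k,b$. This proves local equi-continuity, hence continuity of every $\ttt u$ and the first inclusion of (ii). For the continuity of the operator $\ttt$ on $\mathrm{Lip}_{(k,b)}$ for the compact-open topology, if $u_n\to u$ uniformly on $\overline{B(x_0,R)}$, the localisation formula above together with $|\ttt u(x)-\ttt u_n(x)|\leqslant\sup_{\overline{B(x_0,R)}}|u-u_n|$ (valid since the infima are taken over the same bounded set) concludes the argument. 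Item (iii) is then a direct consequence of (ii) and Lemma \ref{lip}, which shows that $\HH(\alpha)\subset \mathrm{Lip}_{(k(\alpha),b(\alpha))}(X,\R{})$.

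The main technical obstacle is ensuring that the radius $R$ can be chosen depending only on $K$ and the Lipschitz-in-the-large constants (and not on the particular $u$), since this uniformity is what makes both the local equi-continuity and the continuity of $\ttt$ for the compact-open topology work; once the interplay between super-linearity of $c$ and linear growth of $u$ is correctly balanced, the rest is routine.
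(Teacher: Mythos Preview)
Your proposal is correct and follows essentially the same route as the paper: the localisation of the infimum to a compact ball via super-linearity (you use the constant $k+1$ where the paper uses $2k$, a cosmetic difference), then equi-continuity from uniform continuity of $c$ on a product of compact sets, and continuity of the operator from the same localisation; item (iii) is likewise reduced to (ii) via Lemma~\ref{lip}. The only step you leave slightly implicit is the passage from $u\in\HH(C(k)+b)$ to $\ttt u\in\HH(C(k)+b)$ (and similarly $\ttt(\HH(\alpha))\subset\HH(\alpha)$ in (iii)): the paper makes explicit that this uses monotonicity of $\ttt$ applied to the inequality $u\leqslant\ttt u+C(k)+b$ coming from (i), which is presumably what you intend.
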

\begin{proof}
 To prove (i), remark that domination of $u$ by $c+\alpha$ is equivalent to
$$\forall (x,y)\in X^2 ,u(x)\leqslant u(y)+c(y,x)+\alpha,$$
which is equivalent to 
$$\forall x\in X ,u(x)\leqslant \inf_{y\in X} u(y)+c(y,x)+\alpha,$$
 but the right hand side is precisely $\ttt u(x)+\alpha$.
\\Let us prove (ii). Let $u\in \mathrm{Lip}_{(k,b)}(X,\R{})$ and let $x_0\in X$ and $r>0$. We know that
$$\forall y\in X,\forall x\in B(x_0,r),u(y)+c(y,x)\geqslant c(y,x)+u(x_0)-k\d(x_0,y)-b,$$
therefore, using the super-linearity of $c$ we get that
\begin{eqnarray}
u(y)+c(y,x)&\geqslant& 2k\d(x,y)+C(2k)+u(x_0)-k\d(x_0,y)-b\nonumber \\
&\geqslant& k\d(x_0,y)-2kr+C(2k)+u(x_0)-b. \label{1}
\end{eqnarray}
Now, by definition of the Lax-Oleinik semi-group,
$$\ttt u(x)=\inf_{y\in X} u(y)+c(y,x)\leqslant
u(x_0)+c(x_0,x)\leqslant u(x_0)+A(r),$$
 so by condition (\ref{1}) it is
not restrictive to take the infimum on points at a distance less than
$D(r,k,b)=(A(r)+2kr-C(2k)+b)/k$ from $x_0$. Using that $u$ (by lemma \ref{bounded}) and $c$ (by continuity) are bounded below on balls of finite radius (which are compact), the infimum in the Lax-Oleinik semi-group is
finite and if reached, can only be reached in $\overline{B}(x_0,D(r,k,b))$. Note that this already
proves (iv) because a lower semi-continuous function achieves its
minimum on a compact set. The constant $D(r,k,b)$ is
independent of $x\in B(x_0,r)$ and $u\in \mathrm{Lip}_{(k,b)}(X,\R{})$. Therefore if $x_1\in \overline{B}(x_0, r)$
then in the definition of $\ttt u(x_1)$ the infimum may be taken on
points which lie in $\overline{B}(x_0,D(r,k,b))$. Since $\overline{B}(x_0,D(r,k,b))\times
\overline{B}(x_0,r)$ is compact, the restriction of $c$ to this
domain is uniformly continuous, let $\om$ be a modulus of
continuity of $c$ on that domain. One has that the restriction of $\ttt u$ to
$\overline{B}(x_0,r)$ is a finite infimum of equi-continuous
functions and is therefore itself continuous with same modulus of
continuity which only depends on $c$, so the family $\ttt (\mathrm{Lip}_{(k,b)}(X,\R{}))$ is
in fact locally equi-continuous. 
\\Now that we know it is finite, let us check that $\ttt u$ is
$(C(k),+b)$-dominated. This is in fact a direct consequence of the monotonicity of the
Lax-Oleinik semi-group (\ref{monotonicity}). In fact, by (i), since
$u\<c+C(k)+b$ it follows that $u\leqslant \ttt u+C(k)+b$. We therefore have that $\ttt
u\leqslant \ttt (\ttt u)+C(k)+b$ which proves that $\ttt u\<c+C(k)+b$. 
\\It remains to prove that the restriction of this mapping to $\mathrm{Lip}_{(k,b)}(X,\R{})$ is continuous for the topology of uniform convergence on compact subsets. Let $v\in \mathrm{Lip}_{(k,b)}(X,\R{})$ be another dominated
function, $x\in X$. Let $\e>0$ and $x_1\in X$ be such that 
$$|\ttt u(x)-u(x_1)-c(x_1,x)|<\e,$$
 and similarly, chose $x_2$ such that 
$$|\ttt v(x)-v(x_2)-c(x_2,x)|<\e.$$
 Note that both $x_1$ and $x_2$ are necessarily
in $\overline{B}(x,D(0,k,b))$. The following inequality holds:
\begin{eqnarray*}
\ttt v(x)-\ttt u(x)&\leqslant&
v(x_1)+c(x_1,x)-u(x_1)-c(x_1,x)+\e\\
&\leqslant&
\sup_{\overline{B}(x,D(0,k,b))}|u-v|+\e.
\end{eqnarray*} 
By a symmetrical argument, we also
have
$$\ttt u(x)-\ttt v(x)\leqslant u(x_2)+c(x_2,x)-v(x_2)-c(x_2,x)\leqslant \sup_{\overline{B}(x,D(0,k,b))}|u-v|+\e.$$
This being true for all $\e>0$, we have just proved that if $A\subset X$ is compact, then 
 $$\sup_{A}|\ttt u-\ttt v|\leqslant \sup_{A_{D(0,k,b)}}|u-v|,$$
  where $A_{D(0,k,b)}=\{ x\in X,\d(A,x)\leqslant D(0,k,b)\}$ is still compact because it is contained in a ball of finite large radius. This achieves the proof of (ii).\\
To prove (iii), note that by lemma \ref{lip}, dominated functions are equi-Lipschitz in the large. Therefore the family of functions in $\ttt (\HH(\alpha))$ is locally equi-continuous.
\end{proof}
As an immediate consequence of the previous proof we deduce the following result:
\begin{lm}[a priori compactness]\label{apriori}
 Given constants $k$, $b$, $\e>0$ and a compact set $A\subset X$ there is a compact set $A'\subset X$ such that if $v\in \mathrm{Lip}_{(k,b)}(X,\R{})$, $x\in A$ then
$$|u(y)+c(y,x)-\ttt u(x)|\leqslant \e \Longrightarrow y\in A'.$$
\end{lm}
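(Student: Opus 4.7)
The plan is to extract the estimate that already appears, almost verbatim, in the proof of Proposition \ref{HH}(ii), and check that it depends only on $k$, $b$, $\e$ and the ``size'' of $A$, not on $x\in A$ or on the particular function $u$. The key observation is that super-linearity of $c$ against Lipschitz-in-the-large growth of $u$ forces any near-minimizer of $y\mapsto u(y)+c(y,x)$ to lie in a bounded (hence, by the compact closed balls assumption, relatively compact) region.

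Fix a reference point $x_0\in X$ and set $R_A=\sup_{x\in A}\d(x_0,x)<+\infty$ by compactness of $A$. First I would obtain an \emph{upper} bound on $\ttt u(x)$ valid for every $x\in A$ and every $u\in\mathrm{Lip}_{(k,b)}(X,\R{})$: plugging $y=x_0$ into the Lax--Oleinik infimum and using uniform boundedness of $c$ (condition \ref{unifb}) gives
$$\ttt u(x)\ \leq\ u(x_0)+c(x_0,x)\ \leq\ u(x_0)+A(R_A).$$
Next I would obtain a matching \emph{lower} bound on $u(y)+c(y,x)$ that grows with $\d(x_0,y)$. The Lipschitz-in-the-large hypothesis gives $u(y)\geq u(x_0)-k\d(x_0,y)-b$, while uniform super-linearity (condition \ref{unif}) applied with slope $2k$ yields
$$c(y,x)\ \geq\ 2k\d(y,x)-C(2k)\ \geq\ 2k\d(x_0,y)-2kR_A-C(2k).$$
Adding these two estimates,
$$u(y)+c(y,x)\ \geq\ u(x_0)-b+k\d(x_0,y)-2kR_A-C(2k).$$

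Combining with the upper bound on $\ttt u(x)$, any $y$ satisfying $u(y)+c(y,x)\leq \ttt u(x)+\e$ must satisfy
$$k\d(x_0,y)\ \leq\ A(R_A)+b+2kR_A+C(2k)+\e,$$
which bounds $\d(x_0,y)$ by a constant $R'=R'(A,k,b,\e)$ independent of the particular $x\in A$ and $u\in\mathrm{Lip}_{(k,b)}(X,\R{})$. Taking $A'=\overline{B}(x_0,R')$, which is compact since closed balls in $X$ are compact, finishes the proof: the hypothesis $|u(y)+c(y,x)-\ttt u(x)|\leq\e$ implies in particular $u(y)+c(y,x)\leq\ttt u(x)+\e$, hence $y\in A'$.

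There is no real obstacle here, since the body of the argument has already been carried out in the proof of (ii) of Proposition \ref{HH} with $r=0$; the only point requiring a moment's care is replacing a fixed base point $x$ by a varying $x\in A$, which is handled by absorbing $\d(x_0,x)$ into $R_A$ at the cost of enlarging constants. The same statement for $\TTT$ follows by the symmetry $\TTT u=-T_{\overline c}^-(-u)$ applied to the reversed cost $\overline c(x,y)=c(y,x)$, which satisfies the same hypotheses with the same constants.
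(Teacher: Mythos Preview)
Your proof is correct and follows exactly the approach indicated by the paper, which simply records the lemma ``as an immediate consequence of the previous proof'' of Proposition~\ref{HH}(ii). You have just written out explicitly the estimate that the paper leaves implicit: the super-linearity lower bound \eqref{1} combined with the trivial upper bound $\ttt u(x)\leq u(x_0)+A(r)$, with the radius $r$ taken to be $R_A$ so as to cover all of $A$ at once; the resulting bound on $\d(x_0,y)$ is precisely the quantity the paper calls $D(r,k,b)$ (up to the extra $\e$), and your $A'$ is the closed ball of that radius.
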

We now can prove the weak KAM theorem:
\begin{proof}[Proof of theorem \ref{kam}]\label{proof kam}
First, notice that saying that $\HH(\alpha)$ is empty is equivalent to
saying that $\hh{\alpha}$ is empty, because of part (iii) of the previous
proposition (\ref{HH}). Let $\mathbbm{1}$ be the constant function equal to $1$
on $X$ and let $\widehat{C^0(X,\R{})}$ be the quotient of
$C^0(X,\R{})$ by the subspace of constant functions $\R{}\mathbbm{1}$
and let $q$ be the projection operator. Since the semi-group $\ttt$
commutes with the addition of constants, it induces a semi group on
$\widehat{C^0(X,\R{})}$ that we will denote $\tttt$. The topology on
$\widehat{C^0(X,\R{})}$ is the quotient of the compact open topology
on $C^0(X,\R{})$, which makes it a locally convex vector space.\\
 We will call $\HHH (\alpha)$ the image $q(\hh{\alpha})$. It is convex because so is
$\hh{\alpha}$ . Let us introduce the subset $C^0_{x_0}$ of $C^0(X,\R{})$
consisting of the functions which vanish at $x_0$, where $x_0$ is any
point of $X$. Then, $q$ induces a homomorphism of $C^0_{x_0}$ onto
$\widehat{C^0(X,\R{})}$. Since $\hh{\alpha}$ is stable by addition of
constants, $\HHH (\alpha)$ is also the image by $q$ of the set $\HH(\alpha)\cap
C^0_{x_0}=\HH_{x_0}(\alpha)$. Now, $\HH_{x_0}(\alpha)$ is closed for the compact open
topology, it consists of functions which all vanish at $x_0$. We have
seen in the proof of \ref{HH} that $\ttt (\HH(\alpha))$ is a family of
locally equi-continuous and equi-Lipschitz in the large, therefore locally equi-bounded functions. By the Ascoli theorem, we deduce that
$\ttt(\HH_{x_0}(\alpha))$ is relatively compact. Furthermore, since
$$\tttt(q(u))=q(\ttt u-\ttt u(x_0)\mathbbm{1}),$$
we obtain that
$$\tttt(\HHH(\alpha))=q(\ttt(\HH_{x_0}(\alpha)))$$ 
is also relatively compact and the
closed convex envelope of $\tttt(\HHH (\alpha))$ that we will denote $H(\alpha)$
is compact. Note also that  $H(\alpha)\subset \HHH (\alpha) $, since $\HHH (\alpha)$ is convex, closed for the compact open topology and it contains $\ttt(\HHH (\alpha))$.
\\ As a first consequence, if 
$$\alpha [0]=\inf\{\alpha\in
\R{},\HH(\alpha)\neq \varnothing\},$$
 then $\bigcap_{\alpha>\alpha
  [0]}H(\alpha)\neq\varnothing$ as the intersection of a decreasing family
of compact nonempty sets. It follows that $\HH(\alpha [0])$ is non
empty for it contains $q^{-1}\left(\bigcap_{\alpha>\alpha
  [0]}H(\alpha)\right)$.\\ Finally, it is obvious that $\tttt$ carries
$H(\alpha)$ into itself. Since this last subset is a nonempty convex
compact subset of a locally convex topological vector space, we can
apply the Schauder-Tykhonov theorem  (\cite{dug} p.414, Theorem 2.2). This gives that $\tttt$ has a
fixed point in $H(\alpha)$ as soon as $\HH(\alpha)\neq \varnothing$ that is for
all values $\alpha\geqslant \alpha [0]$.\\ If we call $q(u)$ such a fixed
point, with $u\in \HH(\alpha [0])$, we see there is a constant $\alpha'$
such that $\ttt u=u+\alpha'$. Obviously, $u\<c-\alpha'$ so $-\alpha'\geqslant \alpha
[0]$. Moreover since $u\in \HH(\alpha [0])$ we must have $u\leqslant
\ttt u+\alpha [0]$ which gives $u=\ttt u-\alpha'\leqslant \ttt u+\alpha [0]$
and $-\alpha'\leqslant \alpha [0]$. We therefore conclude that $-\alpha'=\alpha
[0]$.
\end{proof}

\bibliography{C1-04-08}

\begin{thebibliography}{CISM00}

\bibitem[Ban88]{Ba}
V.~Bangert.
\newblock Mather sets for twist maps and geodesics on tori.
\newblock In {\em Dynamics reported, Vol.\ 1}, volume~1 of {\em Dynam. Report.
  Ser. Dynam. Systems Appl.}, pages 1--56. Wiley, Chichester, 1988.

\bibitem[BB07]{Be}
Patrick Bernard and Boris Buffoni.
\newblock Weak {KAM} pairs and {M}onge-{K}antorovich duality.
\newblock In {\em Asymptotic analysis and singularities---elliptic and
  parabolic {PDE}s and related problems}, volume~47 of {\em Adv. Stud. Pure
  Math.}, pages 397--420. Math. Soc. Japan, Tokyo, 2007.

\bibitem[CISM00]{cis}
Gonzalo Contreras, Renato Iturriaga, and Hector Sanchez-Morgado.
\newblock {W}eak solutions of the {H}amilton-{J}acobi equation for time
  periodic {L}agrangians.
\newblock {\em preprint}, 2000.

\bibitem[Con01]{Con}
G.~Contreras.
\newblock Action potential and weak {KAM} solutions.
\newblock {\em Calc. Var. Partial Differential Equations}, 13(4):427--458,
  2001.

\bibitem[Dug66]{dug}
James Dugundji.
\newblock {\em Topology}.
\newblock Allyn and Bacon Inc., Boston, Mass., 1966.

\bibitem[Fat05]{Fa}
Albert Fathi.
\newblock Weak {KAM} {T}heorem in {L}agrangian {D}ynamics, preliminary version,
  {P}isa.
\newblock 16 f\'evrier 2005.

\bibitem[FF07]{fatfig07}
A.~Fathi and A.~Figalli.
\newblock Optimal transportation on non-compact manifolds, 2007.

\bibitem[FM07]{FaMa}
A.~Fathi and E.~Maderna.
\newblock Weak {KAM} theorem on non compact manifolds.
\newblock {\em NoDEA}, 14(1):1--27, 2007.

\bibitem[FS03]{fasi3}
Albert Fathi and Antonio Siconolfi.
\newblock {A} third proof of the existence of strict ${C}^1$ subsolutions
  adaptable to time-dependant {L}agragians.
\newblock {\em Unpublished manuscript}, 2003.

\bibitem[FS04]{FaSi}
Albert Fathi and Antonio Siconolfi.
\newblock Existence of {$C\sp 1$} critical subsolutions of the
  {H}amilton-{J}acobi equation.
\newblock {\em Invent. Math.}, 155(2):363--388, 2004.

\bibitem[Gro99]{gro}
Misha Gromov.
\newblock {\em Metric structures for {R}iemannian and non-{R}iemannian spaces},
  volume 152 of {\em Progress in Mathematics}.
\newblock Birkh\"auser Boston Inc., Boston, MA, 1999.
\newblock Based on the 1981 French original With appendices by M. Katz, P.
  Pansu and S. Semmes, Translated from the French by Sean Michael Bates.

\bibitem[Ma{\~n}97]{Man}
Ricardo Ma{\~n}{\'e}.
\newblock Lagrangian flows: the dynamics of globally minimizing orbits.
\newblock {\em Bol. Soc. Brasil. Mat. (N.S.)}, 28(2):141--153, 1997.

\bibitem[Mas07]{mas}
Daniel Massart.
\newblock Subsolutions of time-periodic {H}amilton-{J}acobi equations.
\newblock {\em Ergodic Theory Dynam. Systems}, 27(4):1253--1265, 2007.

\bibitem[Mat91]{Ma2}
John~N. Mather.
\newblock Action minimizing invariant measures for positive definite
  {L}agrangian systems.
\newblock {\em Math. Z.}, 207(2):169--207, 1991.

\bibitem[Mat93]{Mat}
John~N. Mather.
\newblock Variational construction of connecting orbits.
\newblock {\em Ann. Inst. Fourier (Grenoble)}, 43(5):1349--1386, 1993.

\bibitem[MF94]{Ma}
John~N. Mather and Giovanni Forni.
\newblock Action minimizing orbits in {H}amiltonian systems.
\newblock In {\em Transition to chaos in classical and quantum mechanics
  (Montecatini Terme, 1991)}, volume 1589 of {\em Lecture Notes in Math.},
  pages 92--186. Springer, Berlin, 1994.

\end{thebibliography}
\bibliographystyle{alpha}
\end{document}